\numberwithin{equation}{section}
\newcommand{\R}{\mathbb{R}}
\newcommand{\E}{\mathbf{E}}
\newcommand{\Z}{\mathbb{Z}}
\newcommand{\I}{\mathbb{I}}
\newcommand{\pp}{\mathbf{P}}
\newcommand{\kP}{\mathcal{P}}
\newcommand{\kE}{\mathcal{E}}
\newcommand{\supp}{\textrm{supp}}
\newcommand{\cov}{\mathrm{Cov}}
\newcommand{\Var}{\mathrm{Var}}
\newtheorem{lem}{Lemma}[section]
\newtheorem{prop}[lem]{Proposition}
\newtheorem{theo}[lem]{Theorem}
\newtheorem{cor}[lem]{Corollary}
\newtheorem{rem}[lem]{Remark}
\begin{document}

\title{\vspace{20pt}Spectral dimension of simple random walk\\on a long-range percolation cluster}
\author{V.~H.~Can\footnote{\scriptsize{  Institute of Mathematics, Vietnam Academy of Science and Technology, 18 Hoang Quoc Viet, Cau Giay, Hanoi, Vietnam, \& Department of Statistics and Data Science, National University of Singapore, 6 Science Drive 2 Singapore 117546, cvhao89@gmail.com.}}, D.~A.~Croydon\footnote{\scriptsize{Research Institute for Mathematical Sciences, Kyoto University, Kyoto 606-8502, Japan, croydon@kurims.kyoto-u.ac.jp.}} and T.~Kumagai\footnote{\scriptsize{Department of Mathematics, Faculty of Science and Engineering, Waseda University, 3-4-1 Okubo, Shinjuku-ku, Tokyo 169-8555, Japan, t-kumagai@waseda.jp.}}}

\maketitle

\begin{abstract}
Consider the long-range percolation model on the integer lattice $\mathbb{Z}^d$ in which all nearest-neighbour edges are present and otherwise $x$ and $y$ are connected with probability $q_{x,y}:=1-\exp(-|x-y|^{-s})$, independently of the state of other edges. Throughout the regime where the model yields a locally-finite graph, (i.e.\ for $s>d$,) we determine the spectral dimension of the associated simple random walk, apart from at the exceptional value $d=1$, $s=2$, where the spectral dimension is discontinuous. Towards this end, we present various on-diagonal heat kernel bounds, a number of which are new. In particular, the lower bounds are derived through the application of a general technique that utilises the translation invariance of the model. We highlight that, applying this general technique, we are able to partially extend our main result beyond the nearest-neighbour setting, and establish lower heat kernel bounds over the range of parameters $s\in (d,2d)$. We further note that our approach is applicable to short-range models as well.\\
\textbf{Keywords:} long-range percolation, random walk, heat kernel estimates, spectral dimension.\\
\textbf{MSC2020:} 60K37 (primary), 35K05, 60J15, 60J35, 60J74, 82B43.
\end{abstract}


\section{Introduction}

The study of random walks on percolation clusters on the integer lattice $\mathbb{Z}^d$ goes back a long way, at least as far as de Gennes' 1976 description of such a process as an `ant in a labyrinth' \cite{dG}. Mathematically, diffusive scaling limits were first established with respect to the so-called annealed/averaged law, under which both the random process and environment are integrated out \cite{DFGW}. More recently, building on the Gaussian heat kernel estimates of \cite{MTB}, scaling limits under the quenched law (that is, for typical realisations of the environment) have also been obtained \cite{BB,MP,SS}. When the random walk is strongly recurrent, some general theory has been established to obtain on-diagonal heat kernel estimates, and such methods have been used to identify the spectral dimension,
which is the exponent governing the on-diagonal decay of the heat kernel, of the
random walk on critical percolation clusters conditioned to be infinite (see for example \cite{BJKS,Bar-K,KozN,KM,KumSF}).
Whilst the works cited so far have dealt with the nearest-neighbour case, in which only edges between points in $\mathbb{Z}^d$ a unit Euclidean distance apart are considered, it is natural to generalise the model to allow the possibility of edges spanning arbitrarily large distances. In the last decade, substantial progress has been made in understanding random walks on such long-range percolation models, most notably in \cite{CS,CS2}, some of the main results of which are recalled below. Our contribution in this paper is twofold:
\begin{enumerate}
  \item[(i)] To give general sufficient conditions for an on-diagonal lower bound of the heat kernel for random walk
under both the quenched and annealed laws on stationary random media (see Theorem \ref{abth} and Corollary \ref{abthcor} below);
  \item[(ii)] To determine the spectral dimension of random walk on a long-range percolation cluster, under both the quenched and annealed laws, throughout (almost) the entire range of parameters for which the model is defined
(see Theorems \ref{lrpq}, \ref{lrpa} and Corollary \ref{corlrp}).
\end{enumerate}

Concerning (i), we note that the previous work in \cite{BJKS,KM,KumSF} applies only for strongly recurrent random walks,
and the sufficient conditions the latter articles describe for heat kernel lower bounds are rather complicated. Instead, we use stationarity of the model and a useful estimate from  \cite[Theorem 3.7]{L}, which was motivated by the problem of understanding the behaviour of the random walk on certain planar random graphs, such as the uniform infinite planar triangulation/quadrangulation. Details are discussed in Section \ref{hkesec}. Although the techniques we develop are principally targeted at understanding random walk on long-range percolation clusters, we note they are also applicable to short-range models. As a basic example of such, we discuss their use for studying the random walk on the integer lattice in Section \ref{zdsec}.

To present the background and results concerning (ii) more precisely, we proceed to introduce the main application of interest in this paper. Specifically, we consider a long-range percolation model with vertex set $\Z^d$, where $d\geq 1$. For simplicity, in the introduction we suppose that all nearest-neighbour edges are present, although we will later discuss a generalisation of this. For any $x,y\in\mathbb{Z}^d$ with $|x-y|>1$, we suppose the edge between them appears with probability
\begin{equation}\label{pxy}
q_{x,y}=1-\exp(-|x-y|^{-s}),
\end{equation}
independently of the state of other edges. (For most of the subsequent discussion, we could weaken the tail assumption on $q_{x,y}$. Indeed, for our heat kernel estimates in Theorems \ref{lrpq} and \ref{lrpa} below, it would be enough to assume that $c_1|x-y|^{-s}\leq q_{x,y} \leq c_2 |x-y|^{-s}$. We choose to restrict to the specific choice of $q_{x,y}$ above simply for convenience.) The parameter $s$ is called the exponent of the long-range percolation model. In order to ensure that each vertex is directly connected by an edge to only a finite number of vertices (as is required to define the associated discrete-time random walk), we assume that $s$ takes a value strictly greater than $d$. We denote the resulting random graph by $G=(V,E)$, and take the root $\rho$ to be the origin in $\Z^d$. Moreover, we will use the notation LRP($d,s$) to represent this model, and suppose it is built on a probability space with probability measure $\mathbf{P}$ and expectation $\mathbf{E}$.

Providing some context for our results, the following summarises scaling limits that are known to hold for the discrete-time simple random walk on LRP($d$,$s$). Given the environment $G$, this process, which we will denote by $(X_n)_{n\geq 0}$, jumps on each time step from its current location to a uniformly-chosen neighbour in the graph $G$. In the subsequent theorem, it is further assumed that $X_0=\rho$. Part (a), which concerns the stable regime, was established in \cite[Theorem 1.1]{CS2}. As for the Gaussian regime of part (b), the $d=1$ case was dealt with in \cite[Theorem 1.2]{CS2} (see also \cite{ZZ}), and the $d\geq 2$ case in \cite{BCKW}. (We give a new argument for $d=1$, $s>2$ in Section \ref{qipd1} below.) Note that both the stable and Gaussian regimes are thought to be incomplete (see discussion in \cite{CS,CS2} and \cite[Problem 2.9]{BCKW}), and our heat kernel bounds support conjectures about how they extend. Figure \ref{fig1} gives a graphical overview of the situation.

\begin{theo}[Long-range percolation, scaling limits, \cite{BCKW,CS2,ZZ}]\label{lrps}\hspace{10pt}\\
(a) If $d\geq 1$ and $s\in(d,d+1)$, then for $\mathbf{P}$-a.e.\ realisation of LRP($d,s$) and every $p\in [1,\infty)$, the law of
\[\left(n^{-\frac{1}{s-d}}X_{nt}\right)_{t\in[0,1]}\]
on $L^p([0,1])$ converges weakly to the law of an isotropic $\alpha$-stable L\'{e}vy process with $\alpha=s-d$.\\
(b) If $d\geq 1$ and $s>2d$, then for $\mathbf{P}$-a.e.\ realisation of LRP($d,s$), the law of
\[\left(n^{-\frac{1}{2}}X_{nt}\right)_{t\geq 0}\]
on $C([0,\infty))$ converges weakly to that of $(B_{\sigma^2 t})_{t\geq 0}$, where $(B_{t})_{t\geq 0}$ is standard Brownian motion on $\mathbb{R}^d$, and $\sigma^2\in(0,\infty)$ is a deterministic constant.
\end{theo}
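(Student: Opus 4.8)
Parts (a) and (b) are quoted from the literature, so the task is to recall the underlying strategies and to indicate what must be supplied for the case $d=1$, $s>2$ of part (b), which is the one reproved in Section~\ref{qipd1}. For part (a), the stable regime $s\in(d,d+1)$, the argument of \cite[Theorem 1.1]{CS2} is, roughly, to compare the walk --- via a generator/martingale decomposition together with ergodicity of the environment seen from the particle --- with a random walk whose increments are i.i.d.\ and lie in the domain of attraction of the isotropic $\alpha$-stable law with $\alpha=s-d$, and to control the resulting error. The choice of the $L^p$ topology, rather than the Skorokhod topology, is dictated by the model: after rescaling by $n^{1/(s-d)}$ the path genuinely has isolated macroscopic jumps, so one can only expect convergence in a topology insensitive to the precise location of such jumps. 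For part (b) with $d\geq 2$, the proof in \cite{BCKW} runs the standard corrector machinery for reversible walks: construct the harmonic deformation $\Phi(x)=x+\chi(x)$, so that $\Phi(X_n)$ is a martingale under the law started from the stationary environment seen from the particle; deduce an invariance principle for this martingale from ergodicity of the environment process and the martingale central limit theorem; and finally transfer to $X$ itself by showing the corrector is sublinear, $\max_{|x|\leq R}|\chi(x)|=o(R)$, which is where heat kernel and Sobolev-type estimates enter. For $d\geq 2$ and $s>2d$ one has $s>d+2$, so a single increment $|X_1-X_0|$ already has finite variance and the martingale increments are square-integrable; the argument then parallels that for the short-range random conductance model.

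The case $d=1$, $s>2$ is more delicate, precisely because for $s\in(2,3)$ the increment $|X_1-X_0|$ has \emph{infinite} variance, so neither the rescaled path nor the martingale increments are \emph{a priori} controlled in the way the corrector method requires. The plan in Section~\ref{qipd1} is to take the quenched Gaussian heat kernel bounds of Theorems~\ref{lrpq} and~\ref{lrpa} as an independent input, and to combine them with a general criterion for the quenched invariance principle of a reversible walk in a stationary ergodic environment, through roughly three steps. First, the two-sided Gaussian heat kernel estimate forces the walk to be diffusive at the level of its range: up to logarithmic factors, by time $n$ it has visited only $O(\sqrt{n})$ distinct vertices, with quantitative tail bounds on its maximal displacement. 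Second, combining this with a union bound over the visited vertices and the edge probabilities $q_{x,y}\asymp|x-y|^{-s}$ with $s>2$ shows that the longest jump performed in the first $n$ steps is $o(\sqrt{n})$ with high probability; hence the long edges become invisible after rescaling, which simultaneously gives tightness in $C([0,\infty))$ and continuity of any subsequential limit. Third, one identifies the limit by truncation: for a slowly growing cutoff $L_n$, delete all edges of length exceeding $L_n$; the truncated model has finite-variance increments, so the corrector method (with corrector sublinearity again furnished by the heat kernel bounds) yields a Brownian limit with some deterministic diffusivity, and Steps one and two show the truncation error is negligible, so the diffusivity $\sigma^2$ is that of the full model. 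Positivity $\sigma^2>0$ follows from the Gaussian lower bound on the heat kernel.

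The main obstacle is the potential circularity in the second step: the control on the longest jump uses that the walk is diffusive (to bound the number of distinct vertices it can visit), whereas diffusivity --- in particular, continuity and tightness of the limit --- is exactly what one is trying to establish. This is precisely the role of Theorems~\ref{lrpq} and~\ref{lrpa}: they supply the diffusive control of the range \emph{without} presupposing the invariance principle, and so break the circularity. A secondary technical point is that, for $s\in(2,3)$, the corrector must be shown to be well defined (in the $L^2$ sense relative to the stationary measure) and sublinear even though the increments are not square-integrable; the truncation in the third step is the device that reduces this to a genuinely finite-variance situation, after which one must check that letting $L_n\to\infty$ does not alter the limit, using the quantitative bounds from the first two steps.
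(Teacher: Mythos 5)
The parts you attribute to \cite{CS2} and \cite{BCKW} are accurately summarised (ergodicity-plus-martingale comparison with a walk in the stable domain of attraction for (a); the corrector method for (b) with $d\geq 2$), but your reconstruction of the new argument for $d=1$, $s>2$ in Section~\ref{qipd1} does not match the paper and, as written, is circular within the paper's own logic. Section~\ref{qipd1} does not use the heat kernel bounds at all. It proceeds by \emph{resistance scaling} in the style of \cite{Croy,CHK}: (i) the ergodic theorem gives $n^{-1}\tilde\pi(\{an,\dots,bn\})\to C(b-a)$; (ii) Kingman's subadditive ergodic theorem, together with the density of cut-points from \cite[Lemma 10.1]{CS2} (a point $x$ is a cut-point if $\{x,x+1\}$ is the only edge crossing it), yields $n^{-1}R(xn,yn)\to R_\infty|x-y|$ uniformly on compacts with $R_\infty\in(0,1]$; (iii) a parallel-law argument gives resistance divergence $n^{-1}R(0,\{-rn,\dots,rn\}^c)\to\infty$ as $r\to\infty$; and then \cite[Theorem 7.1]{Croy}, via spatial Gromov--Hausdorff-vague convergence, delivers the Brownian limit with deterministic diffusivity. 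This route never touches single-step increments, so the infinite-variance issue for $s\in(2,3)$ that motivates your truncation step simply does not arise; everything is encoded in degrees and in the cut-point structure, which makes the series/parallel resistance calculus tractable.

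The circularity in your plan is real and not resolved the way you suggest. The only quenched lower heat kernel bound the paper proves for $d=1$, $s>2$ is the one in \eqref{qg2}, and the paper obtains it \emph{from} the invariance principle via Lemma~\ref{biskuplem} (``Combining Theorem~\ref{lrps}(b) and Lemma~\ref{biskuplem} gives \ldots \eqref{qg2}''). The general technique of Section~\ref{hkesec} (via (A1)--(A3)) is explicitly not applied in the one-dimensional Gaussian regime --- Lemma~\ref{a3g} is stated only for $d\geq 2$, and Proposition~\ref{rvph} requires $d\geq 2$ in its parts (b), (c). So taking ``the quenched Gaussian heat kernel bounds of Theorems~\ref{lrpq} and~\ref{lrpa} as an independent input'' uses as input exactly what the invariance principle is there to produce. (The upper bound $p^G_{2t}(\rho,\rho)\leq ct^{-1/2}$ is independent, coming from \cite[Theorem 2.1]{BCG}, but an on-diagonal upper bound alone does not give the two-sided control of displacement your Steps~1--2 require.) A truncated-corrector scheme of the kind you outline is a reasonable independent programme, but it would need a genuinely separate proof of diffusive displacement bounds (e.g.\ via off-diagonal estimates or tail bounds on the maximal jump) rather than borrowing them from this paper's Theorems~\ref{lrpq}--\ref{lrpa}; in the paper the corresponding control is instead supplied by the cut-point density, which bounds both the resistance growth below and the effective long-range connectivity above in one stroke.
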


\begin{rem}\label{rem1} (i) To make the statement of the above theorem completely accurate, we need to describe a convention for determining the value of $(X_t)_{t\geq 0}$ between integer times. For both parts above, the result would hold if one were to do this by linear interpolation. Alternatively, using the $J_1$ topology on the Skorohod space $D([0,\infty))$ for part (b) above, one could consider $X_{\lfloor t\rfloor}$ in place of $X_t$. In this article, we will henceforth adopt the latter approach; that is, if we write a continuous variable, $x$ say, where a discrete
argument is required, we suppose it should be treated as $\lfloor x\rfloor$.\\
(ii) Note that part (a) above is proved in \cite{CS2} without the assumption of nearest-neighbour edges being present, which requires a substantial amount of extra work to deal with the percolation issues involved.
\end{rem}

We next set out our heat kernel estimates for LRP($d$,$s$). Given $G$, the (quenched) heat kernel/transition density of $X$ is defined by setting
\[p^G_t(x,y):=\frac{P^G_x\left(X_t=y\right)}{\mathrm{deg}_G(y)},\qquad \forall x,y\in \mathbb{Z}^d,\:t\geq 0,\]
where $P^G_x$ is the (quenched) law of $X$ started from $X_0=x$, and $\deg_G(y)$ is the usual graph degree of $y$ in $G$. For typical realisations of the environment, we have the following bounds. The constants $c_i$ and $\delta_i$ are deterministic, and the $\delta_i$ in particular are discussed in the subsequent remark. We further highlight that, in the parameter regimes where scaling limits are known, the lower heat kernel bounds follow from a general argument adapted from \cite{Biskup} (see Lemma \ref{biskuplem} below). The main contribution of this article is in establishing the remaining lower bounds, which we do by developing \cite[Theorem 3.7]{L} (cited below as Proposition \ref{kpr}). As for the upper bounds, the result in the stable case was previously known from \cite[Theorem 1]{CS}. This was based on a general argument for checking quenched heat kernel upper bounds on random media, which we believe would also be appropriate in the Gaussian case. However, we use another argument based on comparison with a simple random walk and a time change, which more easily adapts to the annealed case, and allows us to remove the logarithmic terms there. For discussion of the case $d=1$, $s=2$, see Remark \ref{critcase} below.

\begin{theo}[Long-range percolation, quenched bounds]\label{lrpq}\hspace{10pt}\\
(a) If $d\geq1$ and $s\in(d,\min\{d+2,2d\})$, then LRP($d,s$) satisfies, $\mathbf{P}$-a.s., for all $t\in\mathbb{N}$ large enough,
\begin{equation}\label{qs}
c_1t^{-\frac{d}{s-d}}\left(\log t\right)^{-\delta_1}\leq p^G_{2t}(\rho,\rho)\leq c_2t^{-\frac{d}{s-d}}\left(\log t\right)^{\delta_2}.
\end{equation}
(b) If $d= 1$ and $s> 2$, then LRP($d,s$) satisfies, $\mathbf{P}$-a.s., for all $t\in\mathbb{N}$ large enough,
\begin{equation}\label{qg2}
c_3t^{-\frac{1}{2}}\leq p^G_{2t}(\rho,\rho)\leq
c_4t^{-\frac{1}{2}}.
\end{equation}
The upper bound holds for $d= 1$ and $s= 2$ as well. \\
(c) If $d\geq 2$ and $s\ge d+2$,
then LRP($d,s$) satisfies, $\mathbf{P}$-a.s., for all $t\in\mathbb{N}$ large enough,
\begin{equation}\label{qg}
c_5t^{-\frac{d}{2}}\left(\log t\right)^{-\delta_3}\leq p^G_{2t}(\rho,\rho)\leq
c_{6}t^{-\frac{d}{2}}\left(\log t\right)^{\delta_4}.
\end{equation}
\end{theo}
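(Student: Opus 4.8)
The plan is to establish the upper bounds first --- the information they carry about how fast the walk spreads feeds into the lower-bound arguments --- and then to obtain the lower bounds, splitting into the parameter ranges for which a scaling limit is available and those for which it is not; the latter constitute the main work.

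For the upper bounds in \eqref{qg2} and \eqref{qg}, the leading-order decay $t^{-d/2}$ is to be obtained by comparing $X$ on $G$ with the nearest-neighbour simple random walk on $\Z^d$ by means of a time change, the contribution of the long-range edges being controlled using $s\ge d+2$ (respectively $s>2$ when $d=1$); this is the condition under which the largest jump made by time $t$ is, with overwhelming probability, at most a constant multiple of $t^{1/2}$, so that the walk is effectively confined to a Euclidean box of side $t^{1/2}$. On such a box the maximal degree is $O(\log t)$, and feeding this bound into a Nash-type (Faber--Krahn) inequality --- available because the Dirichlet form of $X$ dominates that of the nearest-neighbour walk, whose Nash inequality is classical --- produces the polylogarithmic factors $\delta_2,\delta_4$. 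For the leading-order bound in \eqref{qs} one could quote \cite[Theorem~1]{CS}; the same time-change argument reproves it and, importantly, adapts cleanly to the annealed setting of Theorem \ref{lrpa}, where it removes the logarithmic corrections. A little extra care is needed in the localisation step when $d=1$ and $s\in(2,3)$, where a single jump may exceed $t^{1/2}$: here one argues that such jumps, being rare and costly to undo, can only depress the return probability, rather than simply that the walk stays in a box.

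For the lower bounds, in the ranges where an invariance principle is known --- namely $s\in(d,d+1)$ in part (a), the range $d=1$, $s>2$ of part (b), and $s>2d$ in part (c) --- the bound follows from Lemma \ref{biskuplem}, an argument adapted from \cite{Biskup}. The quenched scaling limit of Theorem \ref{lrps} gives $P^G_\rho\big(X_t\in B(\rho,Kt^{1/(s-d)})\big)\ge c$ in the stable case and $P^G_\rho\big(X_t\in B(\rho,Kt^{1/2})\big)\ge c$ in the Gaussian case, for $K$ large; writing $R$ for the relevant radius, Cauchy--Schwarz yields
\[
p^G_{2t}(\rho,\rho)\ \ge\ \frac{P^G_\rho\big(X_t\in B(\rho,R)\big)^2}{\mu(B(\rho,R))}\ \ge\ \frac{c^2}{\mu(B(\rho,R))},
\]
and the $\mathbf P$-a.s.\ volume bound $\mu(B(\rho,R))\le CR^d$ for $R$ large --- which holds by the ergodic theorem together with $\mathbf E[\deg_G(\rho)]<\infty$ --- completes the argument, with no logarithmic loss (so $\delta_1$ and $\delta_3$ may be taken to vanish in these sub-ranges). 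In the remaining ranges, $s\in[d+1,d+2)$ in part (a) and $s\in[d+2,2d]$ in part (c) --- non-empty only when $d\ge 2$, and where no scaling limit is available --- we use instead the general technique of Theorem \ref{abth}, with Corollary \ref{abthcor} supplying the quenched form, which rests on the estimate of \cite[Theorem~3.7]{L} recalled as Proposition \ref{kpr}. Here the translation invariance of LRP($d,s$) is used to bound $\mathbf E[p^G_{2t}(\rho,\rho)]$, and then $p^G_{2t}(\rho,\rho)$ itself, from below in terms of just a bound on the spread of the walk by time $t$ --- which the analysis underlying the upper bounds also provides --- together with the expected ball volume $\mathbf E[\mu(B(\rho,R))]\asymp R^d$; keeping track of the losses incurred gives the polylogarithmic corrections $\delta_1,\delta_3$.

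I expect this last step to be the principal obstacle. Extracting the sharp on-diagonal exponent from Proposition \ref{kpr} forces the displacement input to be essentially optimal, which is exactly why the upper bounds are proved first and with only subpolynomial slack; and the passage from the annealed lower bound on $\mathbf E[p^G_{2t}(\rho,\rho)]$ to the $\mathbf P$-a.s.\ statements in \eqref{qs} and \eqref{qg} must be carried out using ergodicity of the environment, together with the observation that along a fixed realisation $t\mapsto p^G_{2t}(\rho,\rho)$ can decrease by at most a factor $\deg_G(\rho)^{-2}$ over a single time step, so that control along a sparse sequence of times upgrades to all large $t$.
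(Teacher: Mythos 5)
Your high-level partition of the argument is correct and matches the paper's: upper bounds are handled by a Nash-type comparison plus time change (with the stable case quoted from \cite[Theorem~1]{CS}), and lower bounds split into the scaling-limit regimes (handled by Lemma~\ref{biskuplem}) and the remainder (handled by the Lee-type estimate via Theorem~\ref{abth} and Corollary~\ref{abthcor}). The regime bookkeeping — $s\in[d+1,d+2)$ in (a) and $s\in[d+2,2d]$ in (c) being the genuinely new cases, both non-empty only for $d\ge2$ — is also right. However, several of the mechanisms you describe are not the ones that make the proof work, and a couple of them would fail if pursued as stated.

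For the $d=1$ upper bound, you propose confining the walk to a box of side $t^{1/2}$ and then flag that for $s\in(2,3)$ a single long jump can exceed $t^{1/2}$, offering only a heuristic ("such jumps can only depress the return probability"). This gap is real but unnecessary: the paper derives the $d=1$ upper bound in one line from the universal result that $p^G_{2t}(x,x)\le Ct^{-1/2}$ on \emph{any} infinite connected graph (\cite[Theorem~2.1]{BCG}), with no percolation input at all. Moreover, even the paper's time-change argument for $d\ge2$ does \emph{not} localize at scale $t^{1/2}$; it uses \cite[Lemma~4.1]{CS} to confine $Y$ to a box of side $t^{p+1}$ for a large but fixed $p$, which is all that is needed because the only role of localization is to bound the maximum degree in the box by $O(\log t)$ (and for $d=1$ the exponent $(d-1)/2$ on that maximum is $0$, so there is no issue anyway). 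Thinking of $t^{1/2}$ as the required confinement scale is the source of your spurious obstacle.

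Your account of how Proposition~\ref{kpr} produces the lower bound is also not accurate. The inputs to that result are not "a bound on the spread of the walk by time $t$" together with an expected ball-volume estimate; they are a decomposition of $G_n$ into capacitors $(A_i,\Omega_i)$ with controlled piece sizes $|\Omega_i|$, a mass lower bound $\sum_i\pi_n(A_i)$, and a \emph{capacity} upper bound $\sum_i\mathrm{cap}_{\Omega_i}(A_i)$. The capacities are bounded by the Dirichlet energies of explicit linear cut-off functions $\varphi_N$, whose expected energies are computed in Proposition~\ref{rvph} and concentrated using the covariance estimate of Lemma~\ref{lem:cov}; these are static, graph-theoretic estimates, not displacement estimates (this is exactly why the lower-bound argument does not need the upper-bound analysis as input, contrary to what you suggest). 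You also omit an essential structural hypothesis of Theorem~\ref{abth}: the Benjamini--Schramm convergence of the finite approximations $(G_n,\rho_n)$ to $(G,\rho)$, which must be checked (Lemma~\ref{bsclem}) and is what allows the per-vertex estimate on a large random set to be transferred to the root. Finally, the sparse-to-dense upgrade in the quenched statement does not rely on ergodicity or on a per-step factor $\deg_G(\rho)^{-2}$: Corollary~\ref{abthcor}(a) applies Borel--Cantelli along $t_i=e^i$ and then uses the fact that $t\mapsto p^G_{2t}(\rho,\rho)$ is \emph{non-increasing} (\cite[Lemma~4.1]{BarHK}, \cite[Equation~(15)]{LLT}), which cleanly fills in the intermediate times; the annealed bound is obtained in parallel (not derived from) via the constant-$\lambda$ version of (A3).
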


\begin{rem}\label{deltaq}
In the above result, we can take
\[\delta_1:=\left(\frac{4s}{s-d}+\varepsilon\right)\mathbb{I}_{\{s\in[d+1,d+2)\}};\]
note that this means we do not need a log term in the regime where there is a scaling limit. As indicated above, the upper bound in \eqref{qs} is essentially due to \cite{CS}, with $\delta_2$ being as given by the $\delta$ of \cite[Theorem 1]{CS}, which is not explicit. Similarly to Remark \ref{rem1} above, the result of \cite{CS} does not require nearest-neighbour bonds to be present in the model. For $s\in (d,2d)$, which includes the entire stable regime, we explain how to extend the lower bounds of Theorem \ref{lrpq} (and Theorem \ref{lrpa} below) to the non-nearest-neighbour setting in Section \ref{ss:ll}.

The article \cite{KM} gives upper and lower bounds with logs in the case $d=1$ and $s>2$; the above theorem improves on this. (In this case, the assumption of nearest-neighbour bonds is not purely for convenience -- see the discussion preceding \cite[Theorem 1.2]{CS2}.)

We further have
\[\delta_3:=\left(\frac{5d}{2}+4+\varepsilon\right)\mathbb{I}_{\{s=d+2\}}
+\left(2d+4+\varepsilon\right)\mathbb{I}_{\{d+2<s\leq2d\}};\]
again this is zero where we have a scaling limit. And also,
\[\delta_4:=\frac{d-1}{2}.\]
We do not expect any of the $\delta_i$s to be the best possible constants. For discussion concerning the removal of the log term in the upper bound of \eqref{qg} in particular, see Remark \ref{nologrem} below.
\end{rem}

As for the annealed heat kernel, which is obtained by integrating out the randomness of the environment, we have the following.

\begin{theo}[Long-range percolation, annealed bounds]\label{lrpa}\hspace{10pt}\\
(a) If $d\geq1$ and $s\in(d,\min\{d+2,2d\})$, then LRP($d,s$) satisfies, for all $t\in \mathbb{N}$,
\begin{equation}\label{as}
c_1t^{-\frac{d}{s-d}}\leq \mathbf{E}\left(p^G_{2t}(\rho,\rho)\right)\leq c_2t^{-\frac{d}{s-d}}\left(\log t\right)^{\delta_2}.
\end{equation}
(b) If $d\geq2$ and $s=d+2$, then LRP($d,s$) satisfies,  for all $t\in \mathbb{N}$,
\begin{equation}\label{ac}
c_3t^{-\frac{d}{2}}\left(\log t\right)^{-\delta_5}\leq\mathbf{E}\left(p^G_{2t}(\rho,\rho)\right)\leq c_4t^{-\frac{d}{2}}.
\end{equation}
(c) If $d\geq 1$ and $s>\min\{d+2,2d\}$, then LRP($d,s$) satisfies,  for all $t\in \mathbb{N}$,
\begin{equation}\label{ag}
c_5t^{-\frac{d}{2}}\leq\mathbf{E}\left(p^G_{2t}(\rho,\rho)\right)\leq c_6t^{-\frac{d}{2}}.
\end{equation}
The upper bound holds for $d= 1$ and $s= 2$ as well.
\end{theo}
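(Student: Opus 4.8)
I would prove the lower and upper bounds by completely separate mechanisms.

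\textbf{The lower bounds} should all follow from the general annealed criterion, Corollary \ref{abthcor}. Unwinding it, there are two inputs. First, a volume estimate: for $B$ the Euclidean ball of radius $r$ about $\rho$, translation invariance gives $\mathbf{E}(\mu_G(B))=|B|\,\mathbf{E}(\deg_G(\rho))$, and $\mathbf{E}(\deg_G(\rho))=2d+\sum_{x\neq0}q_{0,x}<\infty$ precisely because $s>d$, so $\mathbf{E}(\mu_G(B))\asymp r^d$. Second, an annealed displacement bound: a deterministic scale $r(t)$ with $(\mathbf{E}\otimes P^G_\rho)(|X_t-\rho|>r(t))\leq\frac14$. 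Granting these, the Cauchy--Schwarz argument behind the criterion --- writing $q=P^G_\rho(X_t\in B)$, $M=\mu_G(B)$, one has the pointwise bound $p^G_{2t}(\rho,\rho)\geq q^2/M$ and then $\mathbf{E}(q^2/M)\geq\mathbf{E}(q)^2/\mathbf{E}(M)$ --- gives $\mathbf{E}(p^G_{2t}(\rho,\rho))\geq c\,r(t)^{-d}$. It remains to supply $r(t)$. In (a) one takes $r(t)\asymp t^{1/(s-d)}$; for $s\in(d,d+1)$ this is the stable scaling limit of Theorem \ref{lrps}(a) plus tightness, and for $s\in[d+1,\min\{d+2,2d\})$ it can be read off from the single-step length distribution (tail $\asymp\ell^{-(s-d)}$) or from the matching heat-kernel upper bound. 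In (c) one takes $r(t)\asymp\sqrt t$: the Gaussian scaling limit of Theorem \ref{lrps}(b) when $s>2d$, and, for $d+2<s\leq2d$, a moment estimate exploiting that a single step has finite variance --- which holds exactly when $s>d+2$, since $\mathbf{E}(\sum_{y\sim\rho}|y-\rho|^2)\asymp\sum_x|x|^{2-s}<\infty$ iff $s>d+2$ --- fed into a Gronwall-type bound for $(\mathbf{E}\otimes P^G_\rho)(|X_t-\rho|^2)$. In the critical case (b), $s=d+2$, this second moment diverges only logarithmically, forcing $r(t)\asymp\sqrt{t\log t}$; this is exactly where the factor $(\log t)^{-\delta_5}$ comes from, with $\delta_5$ of order $d/2$.

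\textbf{The upper bounds} are handled differently in each case. In (a) the upper bound coincides with the quenched estimate of \cite{CS} recorded in Theorem \ref{lrpq}(a); I would deduce the annealed version by integrating it out, using $p^G_{2t}(\rho,\rho)\leq(2d)^{-1}$ and a tail bound on the (random) time past which it is valid. For (b), (c) and the borderline $d=1$, $s=2$, the task is to remove the logarithmic correction $(\log t)^{\delta_4}$ of \eqref{qg}, and here one should not argue pathwise. Instead I would first reduce to continuous time via the standard comparison $p^G_{2m}(\rho,\rho)\leq C\,q^G_m(\rho,\rho)$, where $q^G$ is the heat kernel of the constant-speed walk on $G$ with respect to the degree measure, and then bound $q^G_t(\rho,\rho)$ by comparison with the simple random walk on $\mathbb{Z}^d$: since $G$ contains every nearest-neighbour edge, $\mathcal{E}_G\geq\mathcal{E}_{\mathbb{Z}^d}$, and a time change involving the degrees absorbs the discrepancy between the degree measure on $G$ and the counting measure on $\mathbb{Z}^d$. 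The virtue of working annealed is that the degree fluctuations, which are what force the log in the quenched bound, then only need to be controlled in $L^1(\mathbf{P})$, and $\mathbf{E}(\deg_G(\rho))<\infty$ suffices for that.

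\textbf{Main obstacle.} The genuinely delicate point is this log-free annealed upper bound in (b) and (c): the comparison-plus-time-change must be arranged so that the rare long edges and high-degree vertices are absorbed entirely through $L^1(\mathbf{P})$ estimates rather than almost-sure pathwise control, which is awkward because the function whose $L^2$ norm one must bound (essentially $q^G_{t/2}(\rho,\cdot)$) is correlated with the environment. A lesser difficulty is the sharp displacement estimate $(\mathbf{E}\otimes P^G_\rho)(|X_t-\rho|^2)\asymp t\log t$ at $s=d+2$, which has to be read off directly from the edge structure since no invariance principle is available there.
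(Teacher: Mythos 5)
Your approach to the lower bounds deviates fundamentally from the paper's. The paper does \emph{not} prove the annealed lower bounds via a displacement estimate; it feeds the conditions (A1)--(A3) (with constant $\lambda$, via Lemmas \ref{la1}, \ref{la2}, \ref{a3s}(b), \ref{a3g}(b), \ref{a3c}) and Benjamini--Schramm convergence (Lemma \ref{bsclem}) into Corollary \ref{abthcor}(b). You have in effect replaced this by the Biskup-style Cauchy--Schwarz argument of Lemma \ref{biskuplem}. The Cauchy--Schwarz chain $p^G_{2t}(\rho,\rho)\geq q^2/M$, $\mathbf{E}(q^2/M)\geq \mathbf{E}(q)^2/\mathbf{E}(M)$ is sound, but it hinges entirely on supplying an annealed displacement bound $(\mathbf{E}\otimes P^G_\rho)(|X_t|>r(t))\leq\frac14$, and this is exactly the ingredient the paper's machinery is designed to \emph{avoid} needing. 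Your proposed sources for it do not hold up in the regimes that matter. An on-diagonal upper bound on the heat kernel gives no upper displacement control, so ``read it off from the matching heat-kernel upper bound'' is circular. ``Read it off from the single-step length distribution'' implicitly treats the increments as if they were i.i.d.\ with the size-biased step law; they are not, and they are correlated through the environment. The only general tool for converting a single-step moment/tail estimate into a walk-level displacement estimate in a stationary reversible environment is the Kipnis--Varadhan / DFGW $L^2$ martingale approximation. That machinery is intrinsically a finite-variance theory and simply does not apply for $s\in[d+1,d+2)$, where the step has infinite variance but no known annealed stable invariance principle. This is a genuine gap in (a): your proof is missing for exactly the interval $s\in[d+1,\min\{d+2,2d\})$ that the paper's new argument was built to cover. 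For (c) with $d\geq 2$ and $d+2<s\leq 2d$ the finite-variance machinery might in principle be made to work (this is $L^2$ territory), but a ``Gronwall-type bound'' is not the right tool and in any case you do not carry out the martingale decomposition; and at $s=d+2$ the variance is only borderline logarithmically divergent, so again you would need a non-trivial argument, not just a dimensional heuristic, to pin down $r(t)\asymp\sqrt{t\log t}$.

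Your upper-bound plan is much closer to the paper. For (a) the route (integrate the quenched bound of \cite{CS} using the polynomial tail bound on the random threshold time and the trivial bound $p^G_{2t}\leq(2d)^{-1}$) is exactly what the paper does. For (b), (c) and $d=1,\,s=2$, your reduction to continuous time, Nash inequality from $\mathcal{E}_G\geq\mathcal{E}_{\mathbb{Z}^d}$, and time change by the degree are all present in the paper's proof. However, you correctly identify the delicate point (the heat kernel and the degree weight are correlated through the environment) without naming the resolution: the paper controls the time change via the bound \eqref{aaa1}--\eqref{aaa2} and then uses the stationarity and reversibility of the environment process $(G_{Z_s})_{s\geq 0}$ under the annealed degree-biased measure to reduce $\mathbf{E}\bigl(E^G_x(\deg_G(Z_s)^{(d-1)/2})\bigr)$ to $\mathbf{E}\bigl(\deg_G(\rho)^{(d-1)/2}\bigr)<\infty$, uniformly in $s$. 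Note also that the needed moment is $\mathbf{E}(\deg_G(\rho)^{(d-1)/2})$, not merely $\mathbf{E}(\deg_G(\rho))$; this is still finite for $s>d$, but you should identify the correct moment. For $d=1$ the paper instead quotes the universal bound $p^G_{2t}(\rho,\rho)\leq Ct^{-1/2}$ of \cite{BCG} and simply takes expectations, which is considerably simpler than your proposed time-change route in that case.
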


\begin{rem} We can take the same $\delta_2$ as in Theorem \ref{lrpq}, and
\[\delta_5:=\frac{d}{2}+\varepsilon.\]
For $d=1$ and $s>2$, the bounds of \eqref{ag} are obtained in \cite[Section 2]{KM}. For \eqref{ac}, we conjecture that some log correction is necessary, i.e.\ the upper bound is not sharp. We also anticipate that the upper bound in \eqref{as} is not sharp, in that no log term is required in this case.
\end{rem}

As a straightforward consequence of Theorems \ref{lrpq} and \ref{lrpa}, we can read off the spectral dimension of  LRP($d,s$) for $d\geq 1$, $s>d$, apart from at the value $d=1$, $s=2$. Precisely, the quenched spectral dimension is defined to be the $\mathbf{P}$-a.s.\ limit
\[d^{(q)}_{s}(d,s):=-\lim_{t\rightarrow\infty}\frac{2\log p^G_{2t}(\rho,\rho)}{\log t},\]
and the corresponding annealed spectral dimension is the limit
\[d^{(a)}_{s}(d,s):=-\lim_{t\rightarrow\infty}\frac{2\log\mathbf{E}\left( p^G_{2t}(\rho,\rho)\right)}{\log t}.\]
See Figure \ref{fig2} for an illustration of the following result.

\begin{cor}[Long-range percolation, quenched and annealed spectral dimension]\label{corlrp}\hspace{10pt}\\
(a) If $d\geq1$ and $s\in(d,\min\{d+2,2d\})$, then, $\mathbf{P}$-a.s.,
\[d^{(a)}_{s}(d,s)=d^{(q)}_{s}(d,s)=\frac{2d}{s-d}.\]
(b) If $d\geq1$ and $s>\min\{d+2,2d\}$ or $d\geq 2$ and $s=d+2$, then, $\mathbf{P}$-a.s.,
\[d^{(a)}_{s}(d,s)=d^{(q)}_{s}(d,s)=d.\]
\end{cor}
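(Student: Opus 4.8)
The plan is to derive Corollary \ref{corlrp} directly from the heat kernel bounds in Theorems \ref{lrpq} and \ref{lrpa} by taking logarithms, dividing by $\log t$, and letting $t\to\infty$; the only work is to check that the various logarithmic prefactors $(\log t)^{\pm\delta_i}$ contribute nothing in the limit, so that the exponent of $t$ is all that survives.

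First I would treat part (a). For $d\geq 1$ and $s\in(d,\min\{d+2,2d\})$, the quenched bounds \eqref{qs} give, $\mathbf{P}$-a.s.\ for all large $t\in\mathbb{N}$,
\[
-\frac{d}{s-d}\log t - \delta_1\log\log t + \log c_1
\;\leq\; \log p^G_{2t}(\rho,\rho)
\;\leq\; -\frac{d}{s-d}\log t + \delta_2\log\log t + \log c_2.
\]
Multiplying by $-2/\log t$ and using that $\log\log t/\log t\to 0$ and $\log c_i/\log t\to 0$, the limit $d^{(q)}_s(d,s)$ exists and equals $2d/(s-d)$. The identical computation applied to the annealed bounds \eqref{as} (noting the lower bound there has no log factor at all, and the upper bound has only $(\log t)^{\delta_2}$) yields $d^{(a)}_s(d,s)=2d/(s-d)$. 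This gives part (a).

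Next I would treat part (b), splitting into the subcases dictated by the hypotheses. For $d=1$ and $s>2$, \eqref{qg2} pins $p^G_{2t}(\rho,\rho)$ between $c_3 t^{-1/2}$ and $c_4 t^{-1/2}$ with no logarithmic corrections, so $d^{(q)}_s(1,s)=1$ immediately; likewise \eqref{ag} (for $d=1$, $s>2=\min\{d+2,2d\}$) gives $d^{(a)}_s(1,s)=1$. For $d\geq 2$ and $s\geq d+2$, the quenched bounds \eqref{qg} read $c_5 t^{-d/2}(\log t)^{-\delta_3}\leq p^G_{2t}(\rho,\rho)\leq c_6 t^{-d/2}(\log t)^{\delta_4}$; taking logs, dividing by $\log t$, and again using $\log\log t/\log t\to 0$ gives $d^{(q)}_s(d,s)=d$. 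For the annealed statement when $d\geq 2$ and $s=d+2$, one uses \eqref{ac}, whose prefactors $(\log t)^{-\delta_5}$ and (constant) similarly wash out to give $d^{(a)}_s(d,s)=d$; and for $d\geq 2$, $s>\min\{d+2,2d\}=d+2$, one uses the two-sided bound \eqref{ag} with no logs at all. Since $s>\min\{d+2,2d\}$ together with $s=d+2$ when $d\geq 2$ exhausts the hypotheses of part (b) across all dimensions (for $d=1$ the condition $s>\min\{d+2,2d\}$ is $s>2$, and $d+2$ is not in the range of the $s=d+2$ clause since that requires $d\geq 2$), every case is covered.

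There is essentially no obstacle here: the corollary is a formal consequence of the theorems, and the only thing to be careful about is bookkeeping, namely verifying that the union of parameter regimes in Theorems \ref{lrpq} and \ref{lrpa} covers exactly the union of regimes claimed in the corollary, and that in each regime a two-sided bound (with matching polynomial exponent, up to sub-polynomial factors) is available to guarantee the limit \emph{exists} rather than merely having $\limsup$ and $\liminf$ control. Note in particular that the excluded point $d=1$, $s=2$ is precisely the one case where Theorem \ref{lrpq} and Theorem \ref{lrpa} provide only an upper bound, consistent with the discontinuity of the spectral dimension mentioned in the introduction, so the corollary correctly omits it.
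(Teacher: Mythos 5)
Your proposal is correct and is exactly the argument the paper has in mind: the corollary is stated as a "straightforward consequence" of Theorems \ref{lrpq} and \ref{lrpa}, and the only content is the logarithmic bookkeeping you carry out, including the check that the parameter regimes of the two theorems cover those of the corollary and that the two-sided bounds force the limit to exist.
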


\begin{rem}\label{critcase}
As shown by Corollary \ref{corlrp} (and Figure \ref{fig2}), there is a discontinuity in the spectral dimension at $d=1$, $s=2$. Whilst it might be possible to argue from the techniques of this article that, if it exists, the spectral dimension lies in the interval [1,2], determining the exact value seems highly non-trivial. See the discussion of \cite{BBd}, \cite[Problem 2.10]{BCKW}, \cite{DS} and \cite[Remark 2.3(2)]{KM} for further background on the difficulties found in this case.
\end{rem}

\begin{figure}
\begin{center}\includegraphics[width=0.4\textwidth]{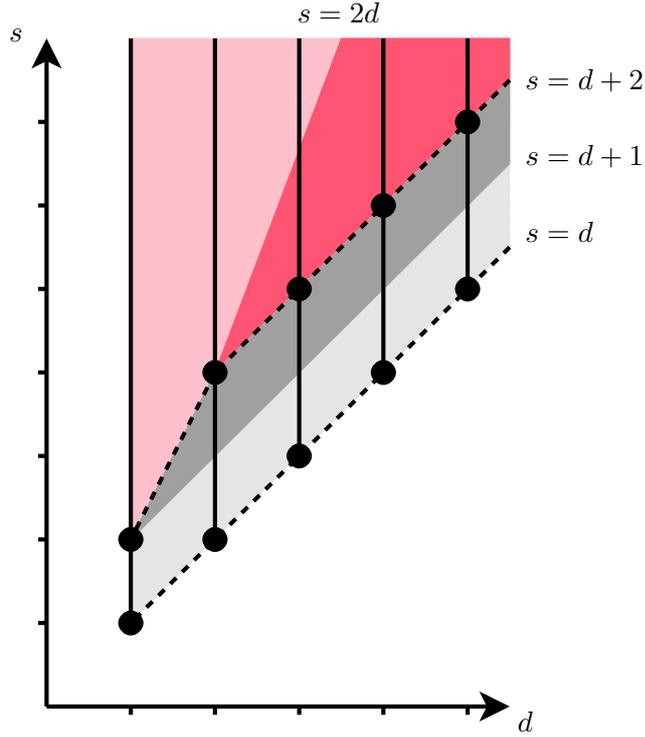}
\rput(-6.7,9.1){$s$}
\rput(-0,0){$d$}
\rput[l](0.0,8.5){$s=d+2$}
\rput[l](0.0,7.5){$s=d+1$}
\rput[l](0.0,6.5){$s=d$}
\rput[l](-3,9.4){$s=2d$}
\end{center}
\caption{Heat kernel regimes for the long-range percolation model studied in this article. The two pink regions correspond to the Gaussian regime, that is $s>\min\{d+2,2d\}$. Gaussian scaling limits are known to hold in the lighter pink region ($s>2d$). Our results provide new heat kernel lower bounds in the darker pink region ($d\geq 2$ and $s\in (\min\{d+2,2d\},2d]$), and establish that the spectral dimension is $d$ throughout both regions. The two grey regions correspond to the stable regime, that is $s\in (d,\min\{d+2,2d\})$. Stable scaling limits are known to hold in the lighter grey region ($s\in(d,d+1)$). We give new heat kernel lower bounds in the darker grey region ($s\in[d+1,d+2)$), and establish that the spectral dimension of the model is given by $2d/(s-d)$ throughout both regions. As confirmed by Corollary \ref{corlrp}, when $d=1$, there is a discontinuity in the spectral dimension at $s=2$. Apart from when $d=1$, we also provide estimates along the critical line $s=\min\{d+2,2d\}$.}\label{fig1}
\end{figure}

\begin{figure}
\begin{center}\includegraphics[width=0.8\textwidth]{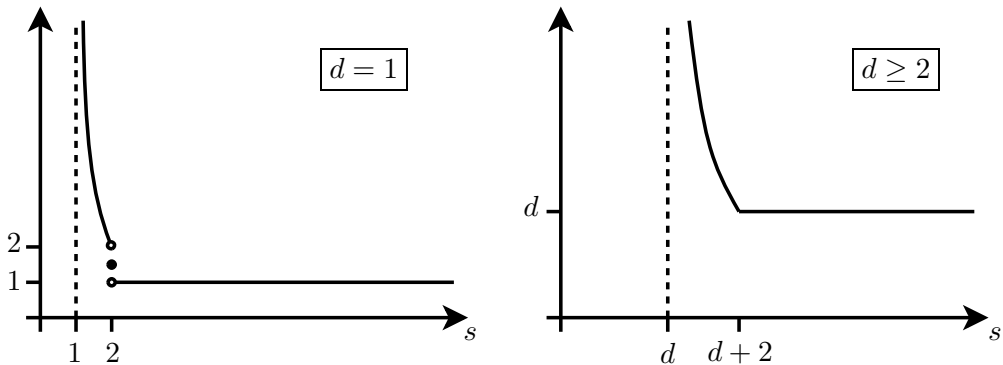}
\rput(-0.1,0){$s$}
\rput(-8.4,3.5){\fbox{$d=1$\vphantom{$d\geq2$}}}
\rput(-1.4,3.5){\fbox{$d\geq 2$}}
\rput(-12.2,-.25){$1$}
\rput(-11.7,-.25){$2$}
\rput(-13.0,.7){$1$}
\rput(-13.0,1.2){$2$}
\rput(-7,0){$s$}
\rput(-4.4,-.25){$d$}
\rput(-3.45,-.25){$d+2$}
\rput(-6.2,1.7){$d$}
\end{center}
\caption{Spectral dimension as a function of $s$ for $d=1$ (left) and $d\geq 2$ (right). The existence and value of the spectral dimension is not yet known for $d=1$, $s=2$, see Remark \ref{critcase} for discussion of this case.}\label{fig2}
\end{figure}

The remainder of the article is organised as follows. In Section \ref{hkesec}, we present our general approach for establishing quenched and annealed heat kernel lower bounds on random media, see Theorem \ref{abth} and Corollary \ref{abthcor}, and also discuss the related upper bound of \cite{CS}. The various assumptions required to apply these results are checked for long-range percolation in Section \ref{sec3}. Then, in Section \ref{sec4}, we put the pieces together to deduce the lower heat kernel bounds of Theorems \ref{lrpq} and \ref{lrpa}, and also give an argument for the corresponding upper heat kernel bounds. Finally, Section \ref{oqsec} lists some questions left open by this work, and Section \ref{secA} is an appendix in which we: explain how the lower heat kernel bound applies to the simpler setting of random walk on $\mathbb{Z}^d$; describe how a quenched scaling limit automatically implies a quenched lower heat kernel bound; present an alternative proof of a quenched invariance principle for long-range percolation in the one-dimensional setting; and describe an extension of our lower heat kernel bounds to a long-range percolation model in which non-nearest-neighbour bonds are not necessarily present.

Concerning notational conventions, we write $x\wedge y:=\min\{x,y\}$ and $x\vee y:=\max\{x,y\}$. For non-negative sequences $f(n)$ and $g(n)$, we define $f(n)\asymp g(n)$ to mean that there exist strictly positive constants $c_1$ and $c_2$ such that $c_1f(n)\leq g(n)\leq c_2f(n)$, and $f(n)\preceq g(n)$ to mean that there exists a strictly positive constant $c$ such that $f(n)\leq cg(n)$. We write $c,C$ for deterministic constants that might change value from line to line.

\section{Heat kernel estimates}\label{hkesec}

\subsection{Lower heat kernel bound}

Towards establishing our lower heat kernel bounds, we present a result from \cite{L} that shows a lower heat kernel bound must hold on some proportion of vertices in a graph, as determined by the sizes and capacities of the pieces in a suitable decomposition of the graph. The latter result is given for an arbitrary connected, finite graph $G =(V,E)$, where $V$ is a set of vertices and $E$ is a set of bonds. We set $\tilde \pi (x)= \deg_G(x)$ for all $x \in V$, and define $\pi$ to be a version of $\tilde \pi$ normalised to be a probability measure, namely
\[\pi(x) = \frac{\deg_G(x)}{2|E|},\qquad \forall x \in V,\]
where for a set $A$, we denote by $|A|$ the number of elements of $A$. Note that $\pi$ is the stationary probability measure for the discrete-time simple random walk associated with $G$. We further write, for any $\varepsilon >0$,
\begin{equation} \label{pst}
\pi^{\star}(\varepsilon) = \max \left\{\pi(W): |W| \leq \varepsilon |V| \right\}.
\end{equation}
The (on-diagonal part of the) natural Dirichlet form on $G$ is defined for functions $\psi:V\rightarrow\mathbb{R}$ as follows:
\begin{equation}\label{edef}
\kE(\psi)=\frac 12\sum_{x,y:\:x\sim y} (\psi(x)-\psi(y))^2,
\end{equation}
where we write $x\sim y$ to mean that $x$ and $y$ are connected by an edge in $E$.
 As a final piece of notation needed to state the result of \cite{L}, let us call a pair $(A,\Omega)$ of subsets $A\subseteq \Omega\subseteq V$ a capacitor, and define the capacity of $(A,\Omega)$ by
\[\mathrm{cap}_\Omega(A):=\inf_{\psi:V:\rightarrow[0,1]}\left\{2 \kE(\psi):\:\psi|_A=1,\:\supp(\psi)\subseteq\Omega\right\},\]
where $\supp(\psi):=\{x\in V:\:\psi(x)\neq 0\}$. Note that our definition of capacity differs from the definition in \cite{L} by a factor of $2|E|$.

\begin{prop}[{\cite[Theorem 3.7]{L}}]\label{kpr}
Let $G=(V,E)$ be a connected, finite graph, and $M\in(0,\infty)$ a constant. Suppose that for some $k\leq |V|$, there are capacitors $(A_1,\Omega_1),\dots, (A_k,\Omega_k)$ such that $(\Omega_i)_{i=1}^k$ are pairwise disjoint and $|\Omega_i|\leq M$ for all $i=1,\dots,k$. Then, for all $\varepsilon>0$ and $t\in \mathbb{N}$,
\[\pi \left(\left\{x \in V:\: p^G_{2t} (x,x) \geq \frac{\varepsilon |V|}{8M|E|}\right\}\right) \geq -2 \pi^{\star}(\varepsilon) + \sum_{i=1}^{k}\pi(A_i)-\frac t{|E|}\sum_{i=1}^k\mathrm{cap}_{\Omega_i}(A_i).\]
\end{prop}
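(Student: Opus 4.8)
The plan is to bound the left-hand side from below by testing the heat semigroup against a single function $\psi$ assembled out of the capacity minimisers of the $(A_i,\Omega_i)$, then comparing a Dirichlet-form lower bound for a suitable bilinear quantity with an upper bound that isolates the contribution of the vertices where the on-diagonal heat kernel is large, and finally renormalising from $\tilde\pi:=\deg_G(\cdot)$ to $\pi=\tilde\pi/(2|E|)$. Write $P$ for the one-step transition operator of the walk, self-adjoint on $L^2(\tilde\pi)$ with spectrum in $[-1,1]$ and $\langle f,(I-P)f\rangle_{\tilde\pi}=\kE(f)$; recall that $p^G_t(x,\cdot)$ is a probability density with respect to $\tilde\pi$, that $p^G_{2t}(x,y)=\langle p^G_t(x,\cdot),p^G_t(y,\cdot)\rangle_{\tilde\pi}$ is symmetric and hence $\sum_y p^G_{2t}(x,y)^2\tilde\pi(y)=p^G_{4t}(x,x)$, that $p^G_{2t}(x,y)\le(p^G_{2t}(x,x)p^G_{2t}(y,y))^{1/2}$, and that $t\mapsto p^G_{2t}(x,x)$ is non-increasing with $p^G_{2t}(x,x)\le\tilde\pi(x)^{-1}$.

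For Step 1 I would, for each $i$, pick $\psi_i\colon V\to[0,1]$ realising the infimum defining $\mathrm{cap}_{\Omega_i}(A_i)$ (such a minimiser exists since $G$ is finite), so that $\psi_i|_{A_i}=1$, $\supp(\psi_i)\subseteq\Omega_i$ and $\kE(\psi_i)=\tfrac12\mathrm{cap}_{\Omega_i}(A_i)$. Since the $\Omega_i$ are pairwise disjoint the $\psi_i$ have pairwise disjoint supports, so $\psi:=\sum_{i=1}^k\psi_i$ again takes values in $[0,1]$, equals $1$ on $\bigcup_iA_i$, is supported in $\Omega:=\bigcup_i\Omega_i$, and satisfies $\kE(\psi)=\sum_i\kE(\psi_i)=\tfrac12\sum_i\mathrm{cap}_{\Omega_i}(A_i)$ and $\langle\psi,\psi\rangle_{\tilde\pi}\ge\tilde\pi(\bigcup_iA_i)=\sum_i\tilde\pi(A_i)$. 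For Step 2, using the elementary inequality $1-\lambda^{2t}\le 2t(1-\lambda)$ on $[-1,1]$, i.e.\ $I-P^{2t}\preceq 2t(I-P)$ on $L^2(\tilde\pi)$, I would deduce
\[
\sum_{x,y}\tilde\pi(x)\psi(x)\,p^G_{2t}(x,y)\,\tilde\pi(y)\psi(y)=\langle\psi,P^{2t}\psi\rangle_{\tilde\pi}\ \ge\ \langle\psi,\psi\rangle_{\tilde\pi}-2t\,\kE(\psi)\ \ge\ \sum_i\tilde\pi(A_i)-t\sum_i\mathrm{cap}_{\Omega_i}(A_i).
\]
(Equivalently, this is the escape estimate: spelling out $\langle\psi_i,(I-P^{2t})\psi_i\rangle_{\tilde\pi}=\tfrac12\sum_x\tilde\pi(x)E^G_x[(\psi_i(X_0)-\psi_i(X_{2t}))^2]$ and keeping only $x\in A_i$ with $X_{2t}\notin\Omega_i$ gives $\sum_i\sum_{x\in A_i}\tilde\pi(x)\,P^G_x(X_{2t}\notin\Omega_i)\le 2t\sum_i\mathrm{cap}_{\Omega_i}(A_i)$ — the chance of having left $\Omega_i$ by time $2t$, averaged over starting points in $A_i$, is controlled by the capacity.)

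For Step 3, set $B:=\{x\in V:p^G_{2t}(x,x)\ge\theta\}$ with $\theta:=\varepsilon|V|/(8M|E|)$, and split the double sum in Step 2 over $x\in B$ and $x\notin B$. Since $0\le\psi\le1$ and $0\le\sum_y p^G_{2t}(x,y)\tilde\pi(y)\psi(y)\le1$, the part with $x\in B$ is at most $\sum_{x\in B}\tilde\pi(x)=\tilde\pi(B)$, which is the term we want. The remaining part runs over $x$ with $p^G_{2t}(x,x)<\theta$, and here the plan is to exploit both this smallness and the constraint $|\Omega_i|\le M$: writing $(P^{2t}\psi)(x)=\sum_i\sum_{y\in\Omega_i}p^G_{2t}(x,y)\tilde\pi(y)\psi_i(y)$ and applying Cauchy--Schwarz \emph{within each $\Omega_i$} (so the bound $|\Omega_i|\le M$, equivalently $\sum_{y\in\Omega_i}p^G_{2t}(x,y)^2\tilde\pi(y)\le p^G_{4t}(x,x)\le p^G_{2t}(x,x)<\theta$, is brought to bear), then using the disjointness of the $\Omega_i$, one aims to bound this part by $2\cdot2|E|\,\pi^{\star}(\varepsilon)$. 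The role of the precise $\theta$ is that $M|E|\theta=\varepsilon|V|/8$ while $\pi^{\star}(\varepsilon)\ge\lfloor\varepsilon|V|\rfloor/(2|E|)$ (every vertex has degree $\ge1$), so a contribution of order $M|E|\theta$ is absorbed; conversely, when $M$ is small the right-hand side of the claim is typically already non-positive (for singleton capacitors $\mathrm{cap}_{\Omega_i}(A_i)=\tilde\pi(A_i)$, so $\pi(A_i)-\tfrac t{|E|}\mathrm{cap}_{\Omega_i}(A_i)<0$ for $t\ge1$), making the inequality trivial there. Granting the upper bound, combining with Step 2 gives $\sum_i\tilde\pi(A_i)-t\sum_i\mathrm{cap}_{\Omega_i}(A_i)\le\tilde\pi(B)+4|E|\,\pi^{\star}(\varepsilon)$, and dividing by $2|E|$ (with $t\le 2t$ and $\tilde\pi=2|E|\pi$) yields the claim.

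Steps 1 and 2 are routine; the main obstacle is Step 3, namely organising the decomposition and the Cauchy--Schwarz estimates so that the contribution of vertices outside $B$ comes out linear in $\theta$ (not merely $\theta^{1/2}$) and with total weight governed by $M$ rather than by a crude bound like $2|E|$ — it is precisely the interplay of $|\Omega_i|\le M$, the disjointness of the $\Omega_i$, and the definition of $\pi^{\star}(\varepsilon)$ that forces the value $\varepsilon|V|/(8M|E|)$. A variant of the same idea, perhaps closer to what is actually needed, is to argue one capacitor at a time: if the mass $\sum_{x\in A_i}\tilde\pi(x)P^G_x(X_{2t}\in\Omega_i)$ that remains in the at-most-$M$-vertex set $\Omega_i$ is substantial, then by pigeonhole some $y\in\Omega_i$ carries a $\gtrsim 1/M$ fraction of it, whence $p^G_{2t}(y,y)\gtrsim(\cdot)^2/\big(M^2\,p^G_{2t}(x,x)\big)$ by the inequality $p^G_{2t}(x,y)^2\le p^G_{2t}(x,x)p^G_{2t}(y,y)$, forcing $y\in B$; adding these contributions over the disjoint $\Omega_i$, with $\pi^{\star}(\varepsilon)$ accounting for the vertices whose occupation is too small, should again produce the bound. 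I expect getting the constants in this last step to line up exactly is where the bulk of the work in \cite{L} lies.
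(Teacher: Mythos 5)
The paper cites Proposition~\ref{kpr} directly from \cite[Theorem 3.7]{L} and does not supply a proof, so there is no internal argument to compare against; I therefore evaluate your proposal on its own merits. Steps 1 and 2 are sound: you correctly assemble $\psi=\sum_i\psi_i$ from the capacity minimisers (note $\kE(\psi)\le\sum_i\kE(\psi_i)$ rather than equality when edges run between distinct $\Omega_i$'s, but this is the useful direction), and the spectral bound $I-P^{2t}\preceq 2t(I-P)$ on $L^2(\tilde\pi)$ does yield $\langle\psi,P^{2t}\psi\rangle_{\tilde\pi}\ge\sum_i\tilde\pi(A_i)-t\sum_i\mathrm{cap}_{\Omega_i}(A_i)$. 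Bounding the $x\in B$ part of $\langle\psi,P^{2t}\psi\rangle_{\tilde\pi}$ by $\tilde\pi(B)$ is also correct, and the final renormalisation by $2|E|$ works out (with slack in the factor of $t$).

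The genuine gap is the one you flag yourself: you never establish that the $x\notin B$ contribution is at most $4|E|\,\pi^{\star}(\varepsilon)$, and neither sketch you offer gets there. The inner Cauchy--Schwarz estimate produces $\sqrt{p^G_{2t}(x,x)}\cdot\bigl(\sum_{y\in\Omega_i}\tilde\pi(y)\psi_i(y)^2\bigr)^{1/2}\le\sqrt{\theta}\,\tilde\pi(\Omega_i)^{1/2}$, which is $\sqrt\theta$ rather than $\theta$, and it is multiplied by $\tilde\pi(\Omega_i)^{1/2}$ — a degree-weighted quantity that the hypothesis $|\Omega_i|\le M$ does not control and which does not recombine into $|E|\pi^{\star}(\varepsilon)$. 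The pigeonhole variant has the same problem: it produces a $y$ whose return density is bounded below by an expression involving $m_i^2$, $M$, and $p^G_{2t}(x,x)$ for some $x\in A_i$, but it is never shown that this exceeds $\theta=\varepsilon|V|/(8M|E|)$, nor how the ``small occupation'' blocks are charged to a set of size $\le\varepsilon|V|$ so that $\pi^{\star}(\varepsilon)$ absorbs them. Since the entire nontrivial content of the proposition beyond the routine Dirichlet-form/escape estimate is precisely the mechanism converting the cardinality constraint $|\Omega_i|\le M$ and the definition of $\pi^{\star}(\varepsilon)$ into the threshold $\theta$, the proof as written is incomplete.
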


\begin{rem}
The appearance of the above result in \cite{L} is slightly different, as it is expressed in terms of the transition probability, rather than the heat kernel/transition density.
\end{rem}

We will apply this result to sequences of graphs that converge in the sense of Benjamini-Schramm \cite{BS}. In particular, for the limit of the sequence, we take $G$ to be a random connected, locally-finite graph, rooted at a distinguished vertex $\rho$. For each $n\geq 1$, $G_n$ will be a random connected, finite graph, rooted at a uniformly chosen vertex $\rho_n$. Given any $k\geq 1$, we suppose that the ball in $G_n$ of radius $k$ (according to the usual shortest path graph distance) centred at the root $\rho_n$ converges in distribution to the ball in $G$ of radius $k$ centred at the root $\rho$ (see \cite[Section 1.2]{BS} for details). To apply Proposition \ref{kpr} to such a sequence, for given non-negative constants $\alpha$, $(\delta_i)_{i=0}^3$ and $\gamma$, and deterministic function $\lambda: (0,\infty)\to [1,\infty)$, we consider the following conditions (that are in fact relevant to any sequence of connected, finite graphs).
\begin{description}
\item [(A1)] For all $n\in \mathbb{N}$,
$$\E \left(\frac{|E_n|^2}{|V_n|^2}\right) \leq \alpha,$$
where $E_n$ and $V_n$ are the edge and vertex set of $G_n$, respectively.
\item [(A2)] For all $\varepsilon >0$, if $n$ is large enough, then
$$ \E\left(\pi_n^{\star}(\varepsilon)^2\right) \leq \alpha \varepsilon,$$
where $\pi_n^{\star}$ is defined as at \eqref{pst} from $\pi_n$, the stationary probability measure of simple random walk on $G_n$.
\item[(A3)] For each $t\in\mathbb{N}$, there exists an integer $n_0=n_0(t)$ such that for each $n\geq n_0$, with probability at least $1-\lambda(t)^{-\delta_0}$, there are $k\leq |V_n|$ capacitors $(A_1,\Omega_1),\dots, (A_k,\Omega_k)$ for the graph $G_n$ such that $(\Omega_i)_{i=1}^k$ are pairwise disjoint and also
\begin{itemize}
\item[(a)] $\max_{i=1,\dots,k}|\Omega_i|\leq \alpha t^\gamma\lambda(t)^{\delta_1}$;
\item[(b)] $\sum_{i=1}^k \pi_n(A_i) \geq 1 -\alpha \lambda(t)^{-\delta_2}$;
\item[(c)] $\sum_{i=1}^k\mathrm{cap}_{\Omega_i}(A_i)\leq 2\alpha |E_n|t^{-1} \lambda(t)^{-\delta_3}$.
\end{itemize}		
\end{description}
We are now ready to state the main result of this section. The assumptions (A1), (A2) and (A3) are clearly designed to feed into the bound of Proposition \ref{kpr}.

\begin{theo} \label{abth}		
Assume that $(G,\rho)$ is the Benjamini-Schramm limit of the sequence $(G_n,\rho_n)$, $n\geq 1$, which satisfies (A1), (A2), (A3) for some positive constants $\alpha$, $(\delta_i)_{i=0}^3$ and $\gamma$, and deterministic function $\lambda: (0,\infty)\to [1,\infty)$. There then exists a constant $C_\alpha$ only depending on $\alpha$ such that, for all $t\in\mathbb{N}$ large enough,
\[\pp \left(p^G_{2t}(\rho,\rho) \geq \frac{1}{t^{\gamma}\lambda(t)^{{\delta_1+2(\delta_2\wedge\delta_3)}} }\right) \geq 1 - C_\alpha\lambda(t)^{-\frac{\delta_0\wedge\delta_2\wedge \delta_3}{2}}.\]
\end{theo}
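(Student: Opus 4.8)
The strategy is to transfer the quantitative heat-kernel lower bound of Proposition~\ref{kpr}, which holds for the finite graphs $G_n$, to the Benjamini--Schramm limit $(G,\rho)$. First I would fix $t\in\mathbb{N}$ and $n\geq n_0(t)$, and condition on the (high-probability, by (A3)) event that the capacitors $(A_1,\Omega_1),\dots,(A_k,\Omega_k)$ exist with the properties (a), (b), (c). On that event, apply Proposition~\ref{kpr} with $M:=\alpha t^\gamma\lambda(t)^{\delta_1}$, which by (a) is a valid common upper bound on $|\Omega_i|$. This gives, for any $\varepsilon>0$,
\[
\pi_n\!\left(\left\{x\in V_n:\ p^{G_n}_{2t}(x,x)\geq \tfrac{\varepsilon |V_n|}{8M|E_n|}\right\}\right)
\ \geq\ -2\pi_n^{\star}(\varepsilon)+\sum_{i=1}^k\pi_n(A_i)-\tfrac{t}{|E_n|}\sum_{i=1}^k\mathrm{cap}_{\Omega_i}(A_i).
\]
Using (b) and (c), the right-hand side is at least $-2\pi_n^{\star}(\varepsilon)+1-\alpha\lambda(t)^{-\delta_2}-2\alpha\lambda(t)^{-\delta_3}\geq 1-2\pi_n^{\star}(\varepsilon)-3\alpha\lambda(t)^{-(\delta_2\wedge\delta_3)}$.

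The next step is to control the threshold $\tfrac{\varepsilon|V_n|}{8M|E_n|}$ and the ``bad'' quantities $\pi_n^\star(\varepsilon)$ and $|E_n|/|V_n|$ in expectation, so as to pass to $n\to\infty$. I would choose $\varepsilon=\varepsilon(t):=\lambda(t)^{-(\delta_2\wedge\delta_3)}$, so that the threshold becomes $\tfrac{|V_n|}{8\alpha t^\gamma\lambda(t)^{\delta_1+(\delta_2\wedge\delta_3)}|E_n|}$. By the stationarity/uniform-root structure, $\mathbf{E}(p^{G_n}_{2t}(\rho_n,\rho_n)\,;\,A)$ and the $\pi_n$-measures above coincide with $\mathbf{P}$-probabilities of events at the uniform root; combining the displayed inequality with Markov's inequality applied to $\pi_n^\star(\varepsilon)$ and to $|E_n|/|V_n|$, and invoking (A1) and (A2) (which bound $\mathbf{E}((|E_n|/|V_n|)^2)$ and $\mathbf{E}(\pi_n^\star(\varepsilon)^2)$ by $\alpha$ and $\alpha\varepsilon$ respectively), one obtains, after a Cauchy--Schwarz step and discarding the event of (A3) (which costs $\lambda(t)^{-\delta_0}$), a lower bound of the form
\[
\mathbf{P}\!\left(p^{G_n}_{2t}(\rho_n,\rho_n)\geq \frac{c_\alpha}{t^\gamma\lambda(t)^{\delta_1+2(\delta_2\wedge\delta_3)}}\right)\ \geq\ 1-C_\alpha\lambda(t)^{-\frac{\delta_0\wedge\delta_2\wedge\delta_3}{2}},
\]
uniformly in $n\geq n_0(t)$, after rescaling the threshold constant into the $\lambda$ power as in the statement (noting $\lambda\geq 1$). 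The exponent $\tfrac{1}{2}$ on $\lambda(t)^{-(\delta_0\wedge\delta_2\wedge\delta_3)}$ is exactly what the Cauchy--Schwarz/Markov trade-off on the squared moments in (A1)--(A2) produces.

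Finally, I would upgrade this to the limit graph $(G,\rho)$. The event $\{p^{G}_{2t}(\rho,\rho)\geq a\}$ depends only on a finite ball around the root (for fixed $t$, the walk of length $2t$ explores only a bounded neighbourhood, and the transition density is a continuous function of that finite ball), so it is a continuity set for Benjamini--Schramm convergence; hence $\mathbf{P}(p^{G_n}_{2t}(\rho_n,\rho_n)\geq a)\to\mathbf{P}(p^{G}_{2t}(\rho,\rho)\geq a)$ as $n\to\infty$. Taking the limit in the uniform-in-$n$ bound above yields the claimed inequality for $(G,\rho)$, valid for all $t$ large enough (the ``large enough'' absorbing the $n\geq n_0(t)$ requirement and the threshold-constant bookkeeping). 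The main obstacle I anticipate is the second step: carefully juggling the choice of $\varepsilon(t)$ against the three error terms and the two second-moment bounds so that a single power $\lambda(t)^{-(\delta_0\wedge\delta_2\wedge\delta_3)/2}$ dominates all of them simultaneously, while keeping the threshold constant under control; the Benjamini--Schramm passage and the application of Proposition~\ref{kpr} are comparatively routine once the bookkeeping is set up.
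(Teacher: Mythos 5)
Your proposal follows essentially the same route as the paper's own proof: applying Proposition~\ref{kpr} with the same choice $M=\alpha t^\gamma\lambda(t)^{\delta_1}$ and $\varepsilon\asymp\lambda(t)^{-(\delta_2\wedge\delta_3)}$ on the high-probability (A3) event, converting the resulting $\pi_n$-bound into a root-probability bound via a Cauchy--Schwarz split controlled by (A1) and (A2), and then passing to the limit via Benjamini--Schramm convergence of the $2t$-ball. The only (cosmetic) difference is that the paper chooses $\varepsilon=8\alpha\lambda(t)^{-(\delta_2\wedge\delta_3)}$ and inserts an explicit indicator $\mathbb{I}(|E_n|/|V_n|\leq\lambda(t)^{\delta_2\wedge\delta_3})$ so that the threshold constant comes out exactly as $1$, avoiding the rescaling step you defer to at the end.
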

\begin{proof} Since $(G_n, \rho_n)$ converges in a Benjamini-Schramm sense to $(G,\rho)$, and $p_{2t}^{G}(\rho,\rho)$ only depends on the ball of radius $2t$ about $\rho$, it holds that, for each fixed $t$,
\[\lim\limits_{n \rightarrow \infty}	\pp \left(p^{G_n}_{2t}(\rho_n, \rho_n) < \frac{1}{t^{\gamma}\lambda(t)^{{\delta_1+2(\delta_2\wedge\delta_3)}}  }\right) = \pp \left(p^G_{2t}(\rho,\rho) < \frac{ 1}{t^{\gamma} \lambda(t)^{{\delta_1+2(\delta_2\wedge\delta_3)}} }\right).\]
Hence, for each fixed $t$ and $\delta$, if $n$ is large, then
\begin{equation} \label{ke0}
\left|	\pp \left(p_{2t}^{G_n}(\rho_n, \rho_n) < \frac{  1 }{t^{\gamma}\lambda(t)^{{\delta_1+2(\delta_2\wedge\delta_3)}} }\right) - \pp \left(p_{2t}^G(\rho,\rho) < \frac{1}{t^{\gamma} \lambda(t)^{{\delta_1+2(\delta_2\wedge\delta_3)}} }\right) \right | \leq \lambda(t)^{-{\delta}}.
\end{equation}
Moreover, writing $\mathbb{I}$ for an indicator function,
\begin{eqnarray}
\lefteqn{\pp \left(p_{2t}^{G_n}(\rho_n,\rho_n) < \frac{1}{t^{\gamma} \lambda(t)^{{\delta_1+2(\delta_2\wedge\delta_3)}} }\right)}\nonumber\\
 & = &\E \left[ \pp \left(p_{2t}^{G_n}(\rho_n,\rho_n) < \frac{1}{t^{\gamma}\lambda(t)^{{\delta_1+2(\delta_2\wedge\delta_3)}}  } \,\vline \, G_n\right)\right] \notag \\
&=& \E \left[ \frac{1}{|V_n|} \sum_{x \in V_n} \I \left(\left\{ p_{2t}^{G_n}(x,x) < \frac{1}{t^{\gamma} \lambda(t)^{{\delta_1+2(\delta_2\wedge\delta_3)}} }\right\}\right)\right] \notag \\
& \leq & \E \left[ \frac{2 |E_n|}{|V_n|}  \sum_{x \in V_n} \frac{\deg_{G_n}(x)}{2|E_n|}\I \left( p_{2t}^{G_n}(x,x) < \frac{1}{t^{\gamma} \lambda(t)^{{\delta_1+2(\delta_2\wedge\delta_3)}} }\right)\right] \notag \\
&=& 2 \E \left[ \frac{ |E_n|}{|V_n|}  \pi_n \left(\left\{ x \in V_n:  p_{2t}^{G_n}(x,x) < \frac{1}{t^{\gamma}\lambda(t)^{{\delta_1+2(\delta_2\wedge\delta_3)}} }\right\}\right)\right]  \notag\\
& \leq & 2 \E \left[\frac{|E_n|^2}{|V_n|^2}\right]^{1/2} \E \left[ \pi_n \left(\left\{ x \in V_n:  p_{2t}^{G_n}(x,x) < \frac{1}{t^{\gamma}\lambda(t)^{{\delta_1+2(\delta_2\wedge\delta_3)}}  }\right\}\right) ^2\right]^{1/2} \notag \\
& \leq & 2 \sqrt{\alpha} \E \left[ \pi_n \left( \left\{x \in V_n:  p_{2t}^{G_n}(x,x) < \frac{1}{t^{\gamma} \lambda(t)^{{\delta_1+2(\delta_2\wedge\delta_3)}} }\right\}\right) ^2\right]^{1/2}, \label{ke1}
\end{eqnarray}	
where we have applied (A1) to deduce the final inequality. Now, suppose $n_0=n_0(t)$ is an integer as in (A3). Applying (A3) and Proposition \ref{kpr} with $M=\alpha t^\gamma\lambda(t)^{\delta_1}$ and $\varepsilon = 8\alpha\lambda(t)^{-(\delta_2\wedge\delta_3)}$,  we obtain that, for $n\geq n_0$, on an event of probability at least $1-\lambda(t)^{-\delta_0}$,
\[\pi_n \left( \left\{x \in V_n:  p_{2t}^{G_n}(x,x) < \frac{1}{t^{\gamma} \lambda(t)^{{\delta_1+2(\delta_2\wedge\delta_3)}} }\right\}\right)\mathbb{I}\left(\frac{|E_n|}{|V_n|}\leq \lambda(t)^{\delta_2\wedge\delta_3} \right) \leq 2 \pi_n^{\star} (\varepsilon) + C_\alpha \lambda(t)^{-(\delta_2\wedge\delta_3)},\]
where $C_\alpha$ is a constant that only depends on $\alpha$. Therefore, by (A1) and (A2), for large $n$,
\begin{eqnarray}
\lefteqn{\E \left[\pi_n \left( \left\{x \in V_n:  p_{2t}^{G_n}(x,x) < \frac{1}{t^{\gamma} \lambda(t)^{{\delta_1+2(\delta_2\wedge\delta_3)}} }\right\}\right)  ^2 \right]^{1/2}}\nonumber\\
 &\leq& C_\alpha'\left( \E [\pi_n^{\star} (\varepsilon)^2] +\lambda(t)^{-2(\delta_2\wedge\delta_3)}
  +\lambda(t)^{-\delta_0}\right)^{1/2} \nonumber \\
&\leq & C_\alpha''\lambda(t)^{-\frac{\delta_0\wedge\delta_2\wedge \delta_3}{2}}. \label{ke2}
\end{eqnarray}
Combining \eqref{ke0}, \eqref{ke1} and \eqref{ke2}, we get that, for all $t$ large enough,
\[\pp \left(p^G_{2t}(\rho,\rho)< \frac{1}{t^{\gamma}\lambda(t)^{\delta_1+2(\delta_2\wedge\delta_3)} }\right) \leq  C_\alpha'''\lambda(t)^{-\frac{\delta_0\wedge\delta_2\wedge \delta_3}{2}},\]
which implies the desired result.
\end{proof}
\begin{rem}
When the assumption (A3) is hard to check for {\it all} integers $n$, we could  replace  (A3)  by (A3'): There exists an integer $n_1=n_1(t)$ such that for $n=n_1$, the conditions (A3)(a)-(c) hold and, moreover,
    \[\pp \left(p_{2t}^{G_n}(\rho_n, \rho_n) < \frac{1}{t^{\gamma}\lambda(t)^{{\delta_1+2(\delta_2\wedge\delta_3)}} } \right)  \leq  \pp \left(p_{2t}^G(\rho,\rho) < \frac{2}{t^{\gamma}\lambda(t)^{{\delta_1+2(\delta_2\wedge\delta_3)}} }\right)  + \lambda(t)^{-{\delta_0}}.\]
The above condition gives a replacement of \eqref{ke0}; the other parts of the proof of Lemma \ref{abth} are exactly the same.
\end{rem}

\begin{rem}
We note that Benjamini-Schramm convergence is a key input into the proof, allowing us to transfer a heat kernel estimate from a large, but unspecified, set to a single specified point. This is somewhat analogous to the approach used to understand the on-diagonal part of the annealed heat kernel of Brownian motion on stable trees in \cite{CHspec,CHspec2}, whereby random re-rooting was used to obtain point-wise asymptotics for the heat kernel from the asymptotics of the trace of the heat semigroup, which is typically a smoother object.
\end{rem}

To complete the section, we give a corollary that explains how the distributional bound of Theorem \ref{abth} can be applied to yield quenched and annealed lower heat kernel bounds.

\begin{cor}\label{abthcor}
Suppose that $(G,\rho)$ is the Benjamini-Schramm limit of the sequence $(G_n,\rho_n)$, $n\geq 1$.\\
(a) Assume that (A1),(A2), (A3) hold for some positive constants $\alpha$, $(\delta_i)_{i=0}^3$ and $\gamma$, and deterministic function $\lambda: (0,\infty)\to [1,\infty)$ satisfying $\sum_{i=1}^\infty \lambda(e^i)^{-\frac{\delta_0\wedge\delta_2\wedge \delta_3}{2}}<\infty$ and $c_1\le \lambda(t)/\lambda(e^{i+1})$ for all $e^i\le t\le e^{i+1}$ and all $i\in {\mathbb N}$. Then, $\mathbf{P}$-a.s., for all $t\in\mathbb{N}$ large enough
\[p^G_{2t}(\rho,\rho) \geq \frac{c_2}{t^{\gamma}\lambda(t)^{\delta_1+2(\delta_2\wedge\delta_3)} }.\]
(b) Assume that (A1),(A2), (A3) hold for some positive constants $\alpha$, $(\delta_i)_{i=0}^3$ and $\gamma$, and constant function $\lambda: (0,\infty)\to [1,\infty)$ given by $\lambda(t)=\lambda_0$, where $1-C_\alpha \lambda_0^{-\frac{\delta_0\wedge\delta_2\wedge \delta_3}{2}}>0$. Then
\[\mathbf{E}\left(p^G_{2t}(\rho,\rho)\right) \geq \frac{c(\alpha,\lambda_0)}{t^{\gamma}},\qquad \forall t\in\mathbb{N},\]
where $c(\alpha,\lambda_0)$ is a constant depending on the values of $\alpha$ and $\lambda_0$.
\end{cor}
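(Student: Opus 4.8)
The plan is to deduce both parts directly from the distributional lower bound of Theorem \ref{abth}, the point being that in part (a) the hypotheses on $\lambda$ are tailored so that the exceptional probabilities are summable along a geometric subsequence (enabling a Borel--Cantelli argument), whereas in part (b) the constant choice $\lambda\equiv\lambda_0$ makes the bound of Theorem \ref{abth} hold with a \emph{fixed} positive probability, which is all one needs after integrating. Throughout I will use the elementary identity $p^G_{2t}(\rho,\rho)=\|p^G_t(\rho,\cdot)\|^2_{\ell^2(\tilde\pi)}$ with $\tilde\pi=\deg_G$, together with $p^G_{t+1}(\rho,\cdot)=Pp^G_t(\rho,\cdot)$ for $P$ the (self-adjoint, $\ell^2(\tilde\pi)$-contractive) transition operator; this gives the standard monotonicity fact that $t\mapsto p^G_{2t}(\rho,\rho)$ is non-increasing, which is the device that lets one transfer a bound known along a subsequence to all large times.

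For part (a), write $\kappa:=\tfrac12(\delta_0\wedge\delta_2\wedge\delta_3)$ and $\beta:=\delta_1+2(\delta_2\wedge\delta_3)$, and apply Theorem \ref{abth} at the times $t_i:=\lceil e^i\rceil$, $i\in\mathbb{N}$: for all $i$ large enough,
\[\pp\Big(p^G_{2t_i}(\rho,\rho)<t_i^{-\gamma}\lambda(t_i)^{-\beta}\Big)\leq C_\alpha\,\lambda(t_i)^{-2\kappa}.\]
Since $t_i\in[e^i,e^{i+1}]$, the regularity hypothesis $c_1\leq\lambda(t)/\lambda(e^{i+1})$ yields $\lambda(t_i)\geq c_1\lambda(e^{i+1})$, so the right-hand side is at most $C_\alpha c_1^{-2\kappa}\lambda(e^{i+1})^{-2\kappa}$, and the assumption $\sum_i\lambda(e^i)^{-2\kappa}<\infty$ makes these probabilities summable. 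Borel--Cantelli then gives that $\mathbf{P}$-a.s.\ there is a random $i_0$ with $p^G_{2t_i}(\rho,\rho)\geq t_i^{-\gamma}\lambda(t_i)^{-\beta}$ for all $i\geq i_0$. To pass to a general large integer $t$, pick $i$ with $t_i\leq t<t_{i+1}$ and use monotonicity to write $p^G_{2t}(\rho,\rho)\geq p^G_{2t_{i+1}}(\rho,\rho)\geq t_{i+1}^{-\gamma}\lambda(t_{i+1})^{-\beta}$; since $t_{i+1}\leq e^{i+1}+1\preceq t$ and the hypotheses on $\lambda$ bound its oscillation across the block containing $t$ and $t_{i+1}$ (so that $\lambda(t_{i+1})\preceq\lambda(t)$), this gives $p^G_{2t}(\rho,\rho)\geq c_2\,t^{-\gamma}\lambda(t)^{-\beta}$, as claimed.

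For part (b), with $\lambda\equiv\lambda_0$ Theorem \ref{abth} provides a constant $C_\alpha$ such that, for all $t$ large enough,
\[\pp\Big(p^G_{2t}(\rho,\rho)\geq t^{-\gamma}\lambda_0^{-\beta}\Big)\geq 1-C_\alpha\lambda_0^{-\kappa}=:p_0,\]
and $p_0>0$ is precisely the standing assumption of (b). Integrating, $\mathbf{E}\big(p^G_{2t}(\rho,\rho)\big)\geq p_0\lambda_0^{-\beta}\,t^{-\gamma}$ for all $t\geq T_0$, say. For the finitely many remaining $t\in\{1,\dots,T_0-1\}$ one uses that $p^G_{2t}(\rho,\rho)\in(0,1]$ a.s., so that each $\mathbf{E}(p^G_{2t}(\rho,\rho))$ is a strictly positive number; taking $c(\alpha,\lambda_0)$ to be the minimum of $p_0\lambda_0^{-\beta}$ and $\min_{1\leq t<T_0}t^\gamma\mathbf{E}(p^G_{2t}(\rho,\rho))$ then gives $\mathbf{E}(p^G_{2t}(\rho,\rho))\geq c(\alpha,\lambda_0)\,t^{-\gamma}$ for every $t\in\mathbb{N}$.

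The only genuinely delicate step is the subsequence-to-all-times passage in part (a): because Theorem \ref{abth} yields only a probabilistic, not almost sure, statement at each fixed $t$, one is forced through Borel--Cantelli to work along $t_i\asymp e^i$, and then the monotonicity of the on-diagonal heat kernel must be combined with the quantitative control of $\lambda$ (and of $t^{-\gamma}$) over a geometric block $[e^i,e^{i+1}]$ furnished by the hypotheses. The remainder is routine bookkeeping with the constants $\alpha$, $(\delta_i)_{i=0}^3$ and $\gamma$.
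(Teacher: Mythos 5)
Your proof is correct and matches the paper's (very terse) argument: Borel--Cantelli along a geometric subsequence $t_i\asymp e^i$ combined with the monotonicity of $t\mapsto p^G_{2t}(\rho,\rho)$ for part (a), and direct integration of the distributional bound from Theorem \ref{abth} for part (b). There is a harmless notational slip worth fixing: having set $\kappa=\tfrac12(\delta_0\wedge\delta_2\wedge\delta_3)$, both the probability bound supplied by Theorem \ref{abth} and the summability hypothesis on $\lambda$ carry the exponent $\kappa$, not $2\kappa$; since your two occurrences of $2\kappa$ are consistent with each other, the Borel--Cantelli step still goes through unchanged once the exponent is corrected.
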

\begin{proof}
(a) Given Theorem \ref{abth}, this follows from the Borel-Cantelli lemma and the monotonicity of the on-diagonal part of the heat kernel $p^G_{2t}(\rho,\rho)$. (That the latter quantity is decreasing in $t$ can be seen from
\cite[Lemma 4.1]{BarHK} and
\cite[Equation (15)]{LLT}, for example.)\\
(b) This is an obvious consequence of Theorem \ref{abth}.
\end{proof}

\subsection{Upper heat kernel bound}

In \cite[Lemma 3.1]{CS}, a general heat kernel upper bound was given. For comparison with the approach of the previous subsection, we summarize it here. In both \cite{CS} and this article, a key aspect of the required input is that we can decompose a large part of the graph in question into suitably-sized pieces that behave well in some way. Here, the focus is on the capacity of the pieces; in \cite{CS}, it is their spectral gap that plays a central role.

To present the setting of \cite[Section 3]{CS}, let $G$ be a connected, locally-finite graph, and $(Y_t)_{t\geq 0}$ be the continuous-time random walk on $G$ with unit mean holding times. For any connected, finite subgraph $H\subseteq G$ and any vertex $x \in H$, we denote by $\deg_H(x)$ the degree of $x$ within $H$. The stationary measure of the random walk on $H$ is given by
\[\pi_H(x) =\frac{\deg_H(x)}{\sum_{y\in H} \deg_H(y)}.\]
Moreover, the spectral gap of $H$ is defined as
\[{\rm Gap}_H= \inf\left\{\frac{\sum_{x,y\in H:\:x\sim y}(f(x)-f(y))^2}{2\sum_{x\in H}f(x)^2\pi_H(x)}:\:f:H\rightarrow\mathbb{R}\text{ non-constant},\:\sum_{x\in H}f(x)\pi_H(x)=0\right\}.\]
For given constants $T_1 \leq T_2$ and $\gamma>0$, and distinguished vertex $\rho\in G$, the following assumption is then considered in \cite{CS}. There exist
\begin{itemize}
  \item two positive functions $\lambda_s, V_s: s\in [T_1/2,T_2/2] \mapsto \R_+$ such that $\lambda_s$ is decreasing and $V_s$ is increasing,
  \item a family of universal constants $\{c_i,C_i\}_{1\leq i \leq 4}$ and $\delta_1$,
  \item for each $s\in [T_1/2,T_2/2]$, a distinguished connected set $B(s)$ containing $\rho$ and a partition $\kP_s$ of $B(s)$ into connected sets $\{H: H \in \kP_s\}$,
\end{itemize}
such that the following holds:
\begin{itemize}
	\item [(B1)] for all $H\in \kP_s$,
	$${\rm Gap}_H \geq \lambda_s,$$
	\item[(B2)] for all $H\in \kP_s$,
	$$c_1 V_s \leq |H| \leq C_1 V_s,$$
	\item[(B3)]  $$c_2 V_s^{-1/\gamma}\log^{-\delta_1} V_s \leq \lambda_s,$$
	\item[(B4)] $$\Delta_{\kP_s} :=\inf_{x\in B(s)}\frac{\deg_H(x)}{\deg_G(x)} \geq c_3 \log^{-1} V_s, $$
	\item[(B5)] $$P_\rho^G(Y_s\in B(s)^c)\leq C_3 V_s^{-1} \log V_s,$$
	\item[(B6)] $$1+C_3+1/(c_1c_3) \leq \psi_s(\rho) V_s \log^{-1} V_s \leq C_4, $$
where $\psi_s(\rho)=P_\rho^G(Y_{2s}=\rho)/\deg_G(\rho)$.
\end{itemize}
Applying these, the following result is proved in \cite{CS}.

\begin{theo} \cite[Lemma 3.1]{CS}
Assume the conditions (B1)--(B6) are satisfied. Then for $\delta = \delta_1 + 1/\gamma$ and $t\in [T_1,T_2]$, we have that
	\[	\psi_t(\rho) \leq \psi_{T_1}(\rho) \wedge C_5(1+C_6(t-T_1))^{-\gamma} |\log(1+C_6(t-T_1))|^{\delta\gamma},\]
where $C_5$ and $C_6$ are universal constants.	
\end{theo}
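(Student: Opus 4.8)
The plan is to run a Nash-type argument on the on-diagonal heat kernel, with the partitions $\kP_s$ and the spectral-gap bound (B1) playing the role usually taken by a Faber--Krahn or Nash functional inequality. Write $f_s:=p^G_s(\rho,\cdot)$ for the density (with respect to the reference measure $\deg_G$) of $Y_s$ started from $\rho$, so that $\|f_s\|_{L^1(\deg_G)}=1$ and $\psi_s(\rho)=p^G_{2s}(\rho,\rho)=\|f_s\|_{L^2(\deg_G)}^2$. Two elementary facts set the scene: $\psi_s(\rho)$ is non-increasing in $s$, which already accounts for the term $\psi_{T_1}(\rho)$ in the minimum; and $\tfrac{d}{ds}\psi_s(\rho)=-2\kE(f_s)$. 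Everything therefore reduces to a lower bound on the Dirichlet energy $\kE(f_s)$ in terms of $\psi_s(\rho)$.

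To obtain such a bound I would discard all edges of $G$ not contained in a single piece of $\kP_s$, giving $\kE(f_s)\ge\sum_{H\in\kP_s}\kE_H(f_s)$, and then apply the Poincaré inequality associated with (B1) on each piece. Passing from $\deg_H$ to $\deg_G$ by (B4), controlling the piece sizes through (B2), and summing over $H$ should yield an estimate of the shape
\[\kE(f_s)\ \gtrsim\ \lambda_s\Big(\psi_s(\rho)-\mathrm{err}_1(s)-\mathrm{err}_2(s)\Big),\]
where $\mathrm{err}_1(s)$ is a constant multiple of the $L^2(\deg_G)$-mass of $f_s$ lying outside $B(s)$ — bounded via (B5) and the pointwise estimate $f_s(x)\le\psi_s(\rho)^{1/2}$ by $\preceq\psi_s(\rho)^{1/2}V_s^{-1}\log V_s$ — and $\mathrm{err}_2(s)$ collects the contributions $\sum_H m_H(\bar f_s^{(H)})^2\preceq V_s^{-1}$ of the piecewise means, which appear because the spectral gap controls only the within-piece fluctuation of $f_s$. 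Ensuring that the effective rate here is genuinely of order $\lambda_s$, rather than something smaller that would degrade the decay exponent, is in my view the main subtlety, and is presumably why the partition is engineered so carefully in (B2) and (B4). The crucial step is then to absorb $\mathrm{err}_1(s)$ and $\mathrm{err}_2(s)$ into a fixed fraction of $\psi_s(\rho)$: this is the purpose of the lower bound in (B6), whose constant $1+C_3+1/(c_1c_3)$ is calibrated so that $\mathrm{err}_1(s)+\mathrm{err}_2(s)\le\tfrac12\psi_s(\rho)$ (the summand $1/(c_1c_3)$ handling $\mathrm{err}_2$, and $C_3$ handling $\mathrm{err}_1$), leaving a clean differential inequality $-\psi_s'(\rho)\gtrsim\lambda_s\psi_s(\rho)$. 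Feeding in (B3), which gives $\lambda_s\gtrsim V_s^{-1/\gamma}\log^{-\delta_1}V_s$, together with the two-sided comparison $\psi_s(\rho)\asymp V_s^{-1}\log V_s$ from (B6) — so that $V_s$ is, up to constants, a function of $\psi_s(\rho)$ with $\log V_s\asymp\log(1/\psi_s(\rho))$ — closes this into a Bernoulli-type inequality, essentially $-\psi_s'(\rho)\gtrsim\psi_s(\rho)^{1+1/\gamma}(\log(1/\psi_s(\rho)))^{-\delta}$ with $\delta=\delta_1+1/\gamma$.

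Finally I would integrate this over $[T_1/2,s]$ in the manner of Nash. Separating variables and using that $\int_\epsilon v^{-1-1/\gamma}(\log(1/v))^{\delta}\,dv\sim\gamma\,\epsilon^{-1/\gamma}(\log(1/\epsilon))^{\delta}$ as $\epsilon\downarrow0$, one obtains $\psi_s(\rho)^{-1/\gamma}(\log(1/\psi_s(\rho)))^{\delta}\gtrsim s$, hence $\psi_s(\rho)\preceq s^{-\gamma}(\log(1/\psi_s(\rho)))^{\delta\gamma}$; a short bootstrap — were $\psi_s(\rho)$ larger than $c\,s^{-\gamma}(\log s)^{\delta\gamma}$, one would have $\log(1/\psi_s(\rho))\preceq\log s$, contradicting the previous display for $c$ small enough — upgrades this to $\psi_s(\rho)\preceq s^{-\gamma}(\log s)^{\delta\gamma}$, with $\delta\gamma=\delta_1\gamma+1$. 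Translating back by monotonicity, $\psi_t(\rho)=p^G_{2t}(\rho,\rho)\le p^G_t(\rho,\rho)=\psi_{t/2}(\rho)$, combining with the a priori bound $\psi_t(\rho)\le\psi_{T_1}(\rho)$, and recording the decay in the form $(1+C_6(t-T_1))^{-\gamma}$ so that it remains valid and finite all the way down to $t=T_1$, gives the stated estimate.

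Apart from the rate issue flagged above, I expect the principal obstacle to be the precise bookkeeping of logarithmic factors: those produced by (B2), (B4), (B5), (B6) and the $V_s\leftrightarrow\psi_s(\rho)$ substitution must combine to exactly the exponent $\delta\gamma$ and no more, which forces one to track the constants rather than work up to $\asymp$ throughout. A secondary point is the correct normalisation of the piecewise spectral gaps and the handling of possibly unbounded degrees within a piece, which is where (B4) must be invoked with care.
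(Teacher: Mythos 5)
This statement is not proved in the present paper at all: it is reproduced verbatim from the cited reference \cite[Lemma~3.1]{CS} and included only to contrast the upper-bound machinery of \cite{CS} with the lower-bound machinery of Section~\ref{hkesec}. There is therefore no in-paper proof against which to compare your argument. What I can assess is whether your sketch is a plausible reconstruction of the cited lemma, and on that score your broad strategy — differentiate $\psi_s(\rho)=\|f_s\|_{L^2(\deg_G)}^2$, drop the edges between the pieces $\kP_s$, invoke a Poincar\'e inequality on each piece via (B1), and close a Bernoulli-type differential inequality using (B3) and the two-sided comparison $\psi_s(\rho)\asymp V_s^{-1}\log V_s$ from (B6) — is a natural guess, and your bookkeeping of which conditions feed where (the log losses from (B2), (B4), (B5), and the calibration of the constant $1+C_3+1/(c_1c_3)$ against the error terms) is coherent with the way (B1)--(B6) are set up.

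However, there is a dimensional mismatch in your central step that I do not think can be dismissed as ``log bookkeeping.'' The spectral gap $\Gap_H$ is defined against the \emph{probability} measure $\pi_H$, so (B1) controls $\kE_H(f_s)\ge \Gap_H\,\mathrm{Var}_{\pi_H}(f_s|_H)$, and $\mathrm{Var}_{\pi_H}(f_s|_H)$ carries an implicit normalisation by $m_H=\sum_{x\in H}\deg_H(x)\asymp V_s$. If one unwinds this to the reference measure $\deg_G$ (and uses (B4)), the lower bound one extracts has the schematic form $\kE(f_s)\gtrsim \lambda_s\,\Delta_{\kP_s}\,m_H^{-1}(\psi_s(\rho)-\mathrm{err})$, not $\kE(f_s)\gtrsim\lambda_s\psi_s(\rho)$; the extra factor $m_H^{-1}\asymp V_s^{-1}\asymp\psi_s(\rho)/\log V_s$ pushes the exponent of $\psi_s$ in the differential inequality up from $1+1/\gamma$ to $2+1/\gamma$ and the resulting polynomial decay rate down from $\gamma$ to $\gamma/(\gamma+1)$. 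Equivalently, one can check on $\Z^d$ directly that when $|H|\asymp 1/\psi_s(\rho)$ — which is what (B6) forces — the quantity $\Gap_H\,\mathrm{Var}_{\pi_H}(f_s|_H)$ is of order $s^{-1}\psi_s(\rho)^2$, an order of magnitude short of the true Dirichlet energy $\kE(f_s)\asymp |\psi_s'(\rho)|\asymp s^{-1}\psi_s(\rho)$. So I do not believe the step $-\psi_s'(\rho)\gtrsim\lambda_s\psi_s(\rho)$ follows from (B1) as you indicate, and the actual argument in \cite{CS} must use the gap differently (for instance, as a bound on the relaxation of the walk to local equilibrium on each piece, with the on-diagonal estimate then coming from $\pi_H(\rho)\lesssim V_s^{-1}$, rather than as a coefficient in a Nash differential inequality). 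Since the cited proof is not reproduced here I cannot verify which route \cite{CS} in fact takes, but as written your central differential inequality needs either a different justification or a correction.
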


Roughly speaking, the above theorem states that if, for all $s$ large enough, we can find a connected subgraph $B(s) \subseteq G$ containing $\rho$ and a partition of this, $\{H:H\in\kP_s\}$, such that:
\begin{itemize}
	\item [(B1')] for all $H$,
	$$|H|\asymp s^{\gamma} \log^{-\delta_1} s,$$
	\item[(B2')] for all $H$,
	$${\rm Gap}_{H}\succeq 1/(s \log^{\delta_2} s),$$
	\item[(B3')]
	$$\Delta_{\kP_s} \succeq 1/ \log s,$$
	\item[(B4')]
	$$P_\rho^G(Y_s\in B(s)^c) \preceq s^{-\gamma}\log^{\delta_3} s,$$
\end{itemize}
then
\[\psi_s(\rho) \preceq s^{-\gamma} \log^{\delta_4} s.\]
We observe that, in the choice of exponents and the quantities of interest, the condition (B1') is similar to (A3)(a) and the condition (B2') is related to (A3)(c). The remaining conditions, (B3') and (B4'), are more technical, and ensure the behaviour of the random walk on the subgraph suitably captures that on the entire graph. As (a simplification of) the main result of \cite{CS}, we state the following. We highlight that, although set-out here in terms of the continuous-time random walk, the conclusion is readily transferred to the discrete-time random walk by applying the argument used in the proof of \cite[Theorem 5.14]{BarHK}, for example.

\begin{theo} \cite[Theorem 1]{CS}
If $d\geq 1$ and $s\in (d,\min\{d+2,2d\})$, then the continuous-time simple random walk on the long-range percolation cluster described in the introduction satisfies the conditions (B1)--(B6) with the exponent $\gamma =d/(s-d)$. As a consequence, with probability one, the upper heat kernel bound
\[	\psi_T(\rho) \leq CT^{-d/(s-d)} \log ^{\delta} T\]
holds for all large $T$, where $C, \delta$ are deterministic constants.
\end{theo}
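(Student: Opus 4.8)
The strategy is to verify the conditions (B1)--(B6) of \cite[Lemma 3.1]{CS} --- whose index variable (there written $s$) is a \emph{time} scale, which to avoid clashing with the percolation exponent $s$ I shall write as $t$ --- for the natural ``stable'' choice of scales. Given a time scale $t$, let $\ell_t:=\lceil t^{1/(s-d)}\rceil$ (up to a polylogarithmic correction, see below), let $\kP_t$ be the tiling of $\Z^d$ by cubes of side $\ell_t$, set $V_t:=\ell_t^d\asymp t^{d/(s-d)}=t^{\gamma}$, put $\lambda_t:=c_\star V_t^{-1/\gamma}(\log V_t)^{-\delta_1}\asymp t^{-1}(\log t)^{-\delta_1}$, and take $B(t)$ to be the union of the cubes of $\kP_t$ lying in a box of side $R_t$ about $\rho$, where $R_t$ is a suitable power of $t$ with $R_t\gg\ell_t$. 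With these choices (B2) and (B3) are immediate: $|H|=V_t$ for every $H\in\kP_t$, and $\lambda_t$ was defined so that (B3) holds with room to spare. For (B4), the degree of a vertex in LRP($d,s$) equals $2d$ plus an independent sum of Bernoulli variables with summable means, hence has a Poisson-type upper tail; a union bound then gives $\max_{x\in B(t)}\deg_G(x)\le C\log|B(t)|\asymp\log V_t$ off an event of probability summable along a geometric sequence of scales, and since $\deg_H(x)\ge d$ for every vertex $x$ and the cube $H$ containing it, $\Delta_{\kP_t}\ge d/(C\log V_t)$ on that event. All six conditions are required only for $t$ large and only $\mathbf{P}$-a.s., so it suffices to make each fail with probability summable along a geometric sequence of scales and then apply Borel--Cantelli, using monotonicity of the on-diagonal heat kernel to interpolate between scales.

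Condition (B5) asks that the continuous-time walk be unlikely to leave $B(t)$ by time $t$, the error being at most $C_3V_t^{-1}\log V_t$. I would establish this by a moment computation, distinguishing whether the trajectory ever traverses an edge of length $\ge R_t/2$. By translation invariance and $\sum_{|z|\ge r}|z|^{-s}\asymp r^{d-s}$, the probability that any such long edge is incident to the polynomially many vertices the walk can reach in time $t$ is $\preceq\mathrm{poly}(t)\,R_t^{d-s}$, which is forced well below $V_t^{-1}\log V_t$ by taking the exponent in $R_t$ large; on the complementary event the increments have bounded range, and applying Doob's inequality to the martingale part of each coordinate --- whose predictable quadratic variation has expectation of order $tR_t^{d+2-s}$ (finite precisely because $s<d+2$), the drift term being of lower order --- bounds the exit probability by $tR_t^{d-s}$ up to polylogs. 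Markov's inequality over the environment, with a further polynomial safety factor built into $R_t$, then yields (B5) with summable failure probability. Condition (B6), which features $\psi_t(\rho)$ itself, is the most delicate: one verifies it within the iteration of \cite[Lemma 3.1]{CS}, bootstrapping from crude a priori estimates at a fixed base scale and exploiting the freedom to adjust $V_t$ (hence $\ell_t$) by polylogarithmic factors so that $\psi_t(\rho)V_t(\log V_t)^{-1}$ stays pinned in a fixed compact interval; the elementary lower bound $\psi_t(\rho)\succeq V_t^{-1}$ needed here follows by confining the walk to $B(t)$ and comparing with the restricted chain.

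The one genuinely hard condition is (B1): together with the now-automatic (B3), it amounts to the assertion that $\mathrm{Gap}_H\succeq t^{-1}(\log t)^{-\delta_1}$ simultaneously for every cube $H$ of side $\ell_t$ in $B(t)$, with probability of failure at most $\ell_t^{-c}$ for a suitably large constant $c$, so that the union bound over the polynomially many cubes, and then the sum over scales, both converge. Heuristically this says that the long-range edges make an $\ell$-cube behave like an isotropic $\alpha$-stable process of index $\alpha=s-d$ confined to a box, whose spectral gap is of order $\ell^{-\alpha}=\ell^{-(s-d)}\asymp t^{-1}$; equivalently, one needs a Poincar\'e inequality for the random conductance network on the cube that is comparable, up to polylogs, to the Poincar\'e inequality for the averaged kernel $|x-y|^{-s}$. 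Proving this uniformly over cubes, with the stated failure probability, is precisely the multiscale renormalisation carried out in \cite{CS}: one ascends a hierarchy of length scales, at each level exhibiting ``good'' sub-boxes that are adequately linked to their neighbours by long edges, showing these good sub-boxes fill the cube, and transporting the Poincar\'e inequality from one scale to the next while tracking the logarithmic losses, which accumulate into $\delta_1$ and ultimately into the exponent $\delta$ of the theorem. I expect this spectral-gap estimate to absorb essentially all of the work; with the scales chosen as above, the remaining conditions reduce to routine moment and degree estimates for LRP($d,s$).
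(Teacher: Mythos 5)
The statement you are proving is presented in this paper purely as a citation of \cite[Theorem 1]{CS}: the authors do not re-derive it. The only surrounding material is a heuristic reformulation of (B1)--(B6) in terms of the cleaner conditions (B1')--(B4'), and the result is then invoked as a black box to supply the upper bounds in \eqref{qs} and \eqref{as}. There is therefore no in-paper proof against which to compare you; one must judge your sketch on its own terms as a reconstruction of the argument in \cite{CS}.

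As such a reconstruction it has the right shape. Your choice of scales $\ell_t\asymp t^{1/(s-d)}$, $V_t\asymp t^{\gamma}$, $\lambda_t\asymp t^{-1}(\log t)^{-\delta_1}$ makes (B2)--(B3) tautological; the degree argument for (B4) (Poisson tails for $\deg_G$, a union bound over $B(t)$, and $\deg_H\geq d$ in each cube) is correct; and your moment computation for (B5) is sound --- in particular the quadratic-variation exponent $d+2-s$ and the observation that its finiteness is exactly where $s<d+2$ is used are both right, and the double polynomial safety margin in $R_t$ is the standard way to make the failure probability summable. You also correctly identify (B1), i.e.\ the uniform Poincar\'e inequality $\mathrm{Gap}_H\succeq t^{-1}(\log t)^{-\delta_1}$ over all $\ell_t$-cubes in $B(t)$, as the only genuinely hard condition, and you correctly name the mechanism (multiscale renormalisation with ``good'' sub-boxes linked by long edges, transporting the Poincar\'e inequality up scales while accruing logarithmic losses).

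Two caveats. First, for (B1) you state what has to be proved and attribute it to the renormalisation of \cite{CS}, but you do not actually construct the renormalisation, define good boxes, prove the one-scale transfer estimate, or track the log losses; since this is where essentially all the work lives, your sketch is not a self-contained proof --- it is an accurate description of one. Second, the ``elementary lower bound $\psi_t(\rho)\succeq V_t^{-1}$'' you invoke for (B6) does not follow directly from confinement to $B(t)$: the Cauchy--Schwarz/confinement argument yields $\psi_t(\rho)\succeq 1/\tilde\pi(B(t))\asymp R_t^{-d}$, and $R_t\gg\ell_t$, so this is far weaker than $V_t^{-1}$. Confining to the single cube $H\ni\rho$ would give the right volume scale but then requires a spectral-gap input to control equilibration time, so in effect (B6) is intertwined with (B1) and with the iteration of \cite[Lemma 3.1]{CS}, as you gesture at but do not pin down. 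In short: your proposal is a faithful and well-organised outline of the \cite{CS} argument, correctly locating the crux, but it defers the crux to \cite{CS} and is vague on (B6), so it does not constitute an independent proof of the theorem.
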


\section{Application to long-range percolation}\label{sec3}

In this section, we prepare the ground for deriving the lower heat kernel bounds for the long-range percolation model LRP($d$,$s$). Given a parameter $q \in [0,1]$, we slightly generalise the setting presented in the introduction around \eqref{pxy} by supposing: for $x,y \in \Z^d$, the edge between them appears with probability
\begin{equation*}
    q_{x,y} = \begin{cases}
    q& \textrm{if } |x-y|=1, \\
    1-\exp(-|x-y|^s)& \textrm{if } |x-y|>1,
    \end{cases}
\end{equation*}
independently of the state of other edges. In particular, when $q=1$, we assume nearest-neighbour edges are present. Note that we only allow $q<1$ in this section, where we discuss the verification of the conditions (A1)--(A3) for the more general model.

It was shown in \cite{AKN} that LRP($d$,$s$) admits an infinite cluster with probability $0$ or $1$, and  has at most one infinite cluster almost-surely. We will suppose that LRP($d$,$s$) percolates, i.e. it has an unique infinite cluster, which we denote by $G=(V,E)$; clearly this includes the $q=1$ setting, and indeed any $q$ above the nearest-neighbour percolation threshold for $\mathbb{Z}^d$. For each $n$, we let $G_n=(V_n,E_n)$ be the largest connected component of  $G \cap [-n,n]^d$, and assume the following.
\begin{itemize}
\item[(V)] There exists a universal constant $c>0$, such that for all $n\geq 1$,
    \begin{equation*}
        \pp(|V_n| \geq cn^d) \geq 1- \exp(-c(\log n)^2).
    \end{equation*}
  \end{itemize}
When $q=1$, the condition (V) is trivial, since  $V_n=[-n,n]^d$. Applying an estimate from \cite{B}, we can also check it in the case $s \in (d,2d)$, see Lemma \ref{vlem} below.

In the subsequent three subsections, we take the condition (V) as given, and proceed to check each of the assumptions (A1), (A2) and (A3). Thus we reduce the problem of obtaining heat kernel lower bounds to that of checking Benjamini-Schramm convergence and (V).

\subsection{Checking (A1) for long-range percolation under (V)}

The assumption (A1) is straightforward to handle for all cases simultaneously.

\begin{lem}\label{la1} For any $d\geq1$ and $s>d$, the random graph $G_n$  satisfies (A1).
\end{lem}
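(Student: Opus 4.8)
The plan is to bound $|E_n|$ crudely by the total degree of the box $[-n,n]^d$ in the ambient percolation graph on $\Z^d$ — that is, the graph of all open bonds, not merely those lying in the infinite cluster $G$ — and to use the hypothesis (V) to rule out atypically small values of $|V_n|$. Writing $\deg(x)$ for the degree of a vertex $x$ in this ambient graph, we have $\deg_{G_n}(x)\le\deg_G(x)\le\deg(x)$, so the handshake identity gives $2|E_n|=\sum_{x\in V_n}\deg_{G_n}(x)\le\sum_{x\in[-n,n]^d}\deg(x)=:S_n$, and it therefore suffices to bound $\E\big[S_n^2/(4|V_n|^2)\big]$. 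Two elementary facts will be used throughout: by translation invariance, $D:=\E[\deg(\rho)]$ is finite, since $\sum_{z}q_{\rho,z}<\infty$ (the bond probabilities satisfy $q_{\rho,z}\preceq 1\wedge|z|^{-s}$ and $s>d$); and $\deg(\rho)$, being a sum of independent Bernoulli variables with summable parameters, has finite moments of every order.

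First I would split the expectation according to whether or not $|V_n|\ge cn^d$, where $c>0$ is the constant in (V), which we may take to be at most $1$. On the event $\{|V_n|\ge cn^d\}$ the integrand is at most $S_n^2/(4c^2n^{2d})$, so it remains to verify $\E[S_n^2]=O(n^{2d})$. Expanding $S_n^2=\sum_{x,y\in[-n,n]^d}\deg(x)\deg(y)$, the diagonal terms contribute $(2n+1)^d\,\E[\deg(\rho)^2]$. For $x\neq y$, the variables $\deg(x)$ and $\deg(y)$ are built from edge indicators whose only common member is the bond $\{x,y\}$; decomposing $\deg(x)=\eta_{x,y}+N_x$ and $\deg(y)=\eta_{x,y}+N_y$, with $\eta_{x,y},N_x,N_y$ mutually independent, yields $\E[\deg(x)\deg(y)]\le q_{x,y}(1+2D)+D^2$. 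Summing over $x\ne y$ in the box and using $\sum_{y}q_{x,y}\le D$, one gets $\E[S_n^2]\le C(d,s)(2n+1)^{2d}$; hence this part of the expectation is bounded by a constant depending only on $c$, $d$ and $s$.

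Next, on the complementary event $\{|V_n|<cn^d\}$ — on which $|V_n|\ge1$ provided $G\cap[-n,n]^d\ne\varnothing$; otherwise $|V_n|=|E_n|=0$ and, reading $0/0$ as $0$, that term vanishes — I would bound $|E_n|/|V_n|\le\tfrac12\max_{x\in[-n,n]^d}\deg(x)=:\tfrac12 M_n$ and apply the Cauchy--Schwarz inequality to obtain $\E\big[(|E_n|/|V_n|)^2\,\I_{\{|V_n|<cn^d\}}\big]\le\tfrac14\,\E[M_n^4]^{1/2}\,\pp(|V_n|<cn^d)^{1/2}$. Since $\E[M_n^4]\le(2n+1)^d\,\E[\deg(\rho)^4]=O(n^d)$, while $\pp(|V_n|<cn^d)\le\exp(-c(\log n)^2)$ by (V), this contribution is at most $Cn^{d/2}\exp\!\big(-\tfrac{c}{2}(\log n)^2\big)$, which is bounded uniformly over $n\ge1$. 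Adding the two contributions yields a constant $\alpha$ as required in (A1).

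The argument is essentially routine. The only points calling for any care are the invocation of (V) to deal with the rare event that $|V_n|$ is small (which is precisely what (V) is for), and the minor bookkeeping in the second-moment estimate for $S_n$ caused by the single bond shared between two distinct ambient degrees. I do not anticipate a genuine obstacle.
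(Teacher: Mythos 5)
Your proof is correct and follows essentially the same route as the paper: split the expectation on the event $\{|V_n|\ge cn^d\}$ supplied by (V), handle the good event with a second-moment estimate in $n^{2d}$, and control the bad event via Cauchy--Schwarz against the superpolynomially small probability from (V). Your variants — bounding $2|E_n|$ by the ambient degree sum $S_n$ with the explicit shared-bond decomposition, and bounding $|E_n|/|V_n|$ on the bad event by $\tfrac12\max_x\deg(x)$ rather than by $|E_n|$ itself — are mild refinements but not a different argument.
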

\begin{proof}
By applying the Cauchy-Schwarz inequality and (V), we have for some $c>0$,
\begin{eqnarray}  \label{eevn2}
\E \left( \left(\frac{|E_n|}{|V_n|} \right)^2 \right) & \leq & c^{-2} n^{-2d} \E[|E_n|^2] + \E[|E_n|^2 \I(|V_n|<cn^d)] \notag \\
&\leq& c^{-2} n^{-2d} \E[|E_n|^2] + (\E[|E_n|^4])^{1/2} (\pp[|V_n|<cn^d])^{1/2}.
\end{eqnarray}
In addition, writing $V_n^*=[-n,n]^d$,
\begin{eqnarray}\label{en3}
\E[|E_n|^2]  &\leq& \E \left( \left( \sum_{x,y \in V_n^*} \I(x \sim y) \right)^2 \right) = \sum_{x,y,u,v \in V_n^*} \pp[x \sim y, u \sim v] \notag\\
&\preceq&  \sum_{x,y,u,v \in V_n^*} (1+|x-y|^s)^{-1} (1+|u-v|^s)^{-1}  \preceq n^{2d}.
\end{eqnarray}
Similarly,
\begin{equation} \label{en4}
    \E[|E_n|^4]  \preceq n^{4d},
\end{equation}
Combining \eqref{eevn2}, \eqref{en3} and \eqref{en4} with  (V),  we obtain that $\E[(|E_n|/|V_n|)^2] \preceq 1$, as desired.
\end{proof}

\subsection{Checking (A2) for long-range percolation under (V)}

For assumption (A2), we can also deal with all cases simultaneously.

\begin{lem}\label{la2} For any $d\geq1$ and $s>d$, the random graph $G_n$ satisfies (A2).
\end{lem}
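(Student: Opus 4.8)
The plan is to show that for every $\varepsilon>0$, for $n$ large enough, $\E(\pi_n^\star(\varepsilon)^2)\leq \alpha\varepsilon$. Recall $\pi_n^\star(\varepsilon)=\max\{\pi_n(W):|W|\leq \varepsilon|V_n|\}$, where $\pi_n(W)=\sum_{x\in W}\deg_{G_n}(x)/(2|E_n|)$. Since $\pi_n$ is a weighted measure, the maximiser over sets of size at most $\varepsilon|V_n|$ is obtained by greedily selecting the $\lfloor\varepsilon|V_n|\rfloor$ vertices of largest degree in $G_n$. So the first step is to bound $\pi_n^\star(\varepsilon)$ by (sum of the top $\varepsilon|V_n|$ degrees)$/(2|E_n|)$. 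The key difficulty is that long-range percolation has unbounded degrees with heavy (but summable, since $s>d$) tails: a vertex $x$ has $\deg_{G_n}(x)$ with expectation $O(1)$ but a polynomial tail, so the largest degrees in a box of $n^d$ vertices can be of order $n^{d/s}$ (up to logs), and one must control how much mass these extreme-degree vertices carry.

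The cleanest route I would take avoids even identifying the top-degree vertices. On the event $\{|V_n|\geq cn^d\}$ (which by (V) fails with probability at most $\exp(-c(\log n)^2)$, and where trivially $\pi_n^\star(\varepsilon)\leq 1$, contributing a negligible term to the expectation after the final Cauchy--Schwarz/union step, exactly as in the proof of Lemma \ref{la1}), I would argue as follows. For any truncation level $K=K(n)$, split each degree as $\deg_{G_n}(x)=\deg_{G_n}(x)\wedge K + (\deg_{G_n}(x)-K)_+$. For any set $W$ with $|W|\leq\varepsilon|V_n|$,
\[
\sum_{x\in W}\deg_{G_n}(x)\ \leq\ K\,|W| + \sum_{x\in V_n}(\deg_{G_n}(x)-K)_+\ \leq\ K\varepsilon|V_n| + \sum_{x\in V_n^*}(\deg_{G_n}(x)-K)_+,
\]
with $V_n^*=[-n,n]^d$. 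Meanwhile $2|E_n|=\sum_{x\in V_n}\deg_{G_n}(x)\geq$ (a lower bound of order $n^d$, again using (V) together with the fact that nearest-neighbour-type edges force $\deg\geq 1$, or more carefully a first/second-moment argument showing $|E_n|\asymp n^d$ with high probability). Hence on the good event
\[
\pi_n^\star(\varepsilon)\ \preceq\ \frac{K\varepsilon n^d + \sum_{x\in V_n^*}(\deg_{G_n}(x)-K)_+}{n^d}.
\]
Now I would choose $K$ to be a large constant (independent of $n$, eventually sent to infinity after $\varepsilon$): since $\E[(\deg_{G_n}(x)-K)_+]\to 0$ as $K\to\infty$ uniformly in $x$ and $n$ (the degree of $x$ is stochastically dominated by $1+\sum_{y}\mathrm{Bernoulli}(q_{x,y})$ with $\sum_y q_{x,y}<\infty$, which has exponential moments, so its positive tail past $K$ is summable and $\to 0$), the second term has expectation $o(n^d)$ for $K$ large. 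A similar second-moment bound on $\sum_{x\in V_n^*}(\deg_{G_n}(x)-K)_+$ — using independence of edges at far-apart pairs to control the covariance, exactly in the style of \eqref{en3} — gives $\E[(\cdots)^2]\preceq (\eta(K) n^d)^2$ with $\eta(K)\to 0$. Squaring the displayed bound and taking expectations then yields $\E[\pi_n^\star(\varepsilon)^2]\preceq K^2\varepsilon^2 + \eta(K)^2$; choosing first $K$ large so that $\eta(K)^2\leq \varepsilon$, and then noting $K^2\varepsilon^2\leq \varepsilon$ for $\varepsilon$ small (and absorbing the range of large $\varepsilon$, where the bound $\pi_n^\star\leq 1$ makes (A2) trivial, into the constant $\alpha$), gives $\E[\pi_n^\star(\varepsilon)^2]\leq \alpha\varepsilon$ for all $n$ large.

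The main obstacle is the unbounded degrees: one has to be sure the extreme-degree vertices cannot carry a non-vanishing fraction of $\pi_n$ on a sublinear set. The truncation argument above handles this because the degree tail is summable ($s>d$ is exactly what is needed), and the denominator $2|E_n|$ is genuinely of order $n^d$ with overwhelming probability. The remaining bookkeeping — the second-moment estimate on the truncated-excess sum, and splitting off the low-probability bad event from (V) via Cauchy--Schwarz together with the crude bound $\pi_n^\star\leq 1$ and the fourth-moment control $\E[|E_n|^4]\preceq n^{4d}$ already established in \eqref{en4} — is routine and parallels the proof of Lemma \ref{la1}.
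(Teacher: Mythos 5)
Your proposal is correct, but it takes a genuinely different route from the paper. Both proofs first reduce to the good event $\{|V_n|\gtrsim n^d\}$ (hence $|E_n|\gtrsim n^d$) supplied by (V), paying a negligible price via the crude bound $\pi_n^\star\leq 1$. From there, the paper controls the supremum over candidate sets $W$ by a union bound over all $\binom{|V_n^*|}{\varepsilon|V_n^*|}$ subsets, bounding each via an exponential Markov (Chernoff) estimate for $\sum_{x\in W}\deg(x)$ — see \eqref{sdw} — and closing the argument with Stirling's formula so that the entropy term $CI(\varepsilon)n^d$ is beaten by $\sqrt{\varepsilon}\,n^d$ for $\varepsilon$ small. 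Your proof instead truncates each degree at level $K$, so that the extremal-set supremum is dominated by a set-\emph{independent} remainder $\sum_{x\in V_n^*}(\deg(x)-K)_+$; this sidesteps the combinatorial union bound and replaces the Chernoff/Stirling bookkeeping with a second-moment (Cauchy--Schwarz) estimate in the spirit of \eqref{en3}. Both arguments ultimately rest on the same structural input — under $s>d$ the degree $\deg(x)$ is stochastically dominated by a sum of independent Bernoullis with summable parameters, hence has exponential moments — but yours is more elementary and would go through under weaker polynomial tail assumptions, so long as $\eta(K):=\E[(\deg(\rho)-K)_+]$ decays at least like $K^{-1}$. One small point to make explicit when writing this up: since $K$ must depend on $\varepsilon$ (chosen so that the truncated-excess contribution is $\lesssim\varepsilon$), you should verify that $K(\varepsilon)^2\varepsilon^2=O(\varepsilon)$ uniformly; with exponential tails $K(\varepsilon)\asymp\log(1/\varepsilon)$, so $K(\varepsilon)^2\varepsilon^2\asymp(\log(1/\varepsilon))^2\varepsilon^2=o(\varepsilon)$, and the final constant $\alpha$ is indeed $\varepsilon$-independent after absorbing the trivial regime $\varepsilon\geq\varepsilon_0$ via $\pi_n^\star\leq 1$.
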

\begin{proof} Since $|E_n| \geq |V_n|-1$, it follows from (V) that
\begin{equation*}
    \pp[|E_n| \geq cn^d] \geq 1- \exp(-c(\log n)^2),
\end{equation*}
for some $c>0$. Therefore,
\begin{eqnarray} \label{pst1}
\E\left[\pi_n^\star(\varepsilon)^2\right] & =& \E \left[\sup_{|W|\leq \varepsilon |V_n|} \left(\frac{\sum_{x\in W} \textrm{deg}_{G_n}(x)}{2|E_n|}\right)^2\right] \notag \\
&\leq &  c^{-2}  \varepsilon + \exp(-c(\log n)^2)  + \pp \left(\sup_{|W|\leq \varepsilon |V_n|} \sum_{x\in W} \textrm{deg}_{G_n}(x) \geq \sqrt{\varepsilon} n^d\right).~~~~
\end{eqnarray}
Since $V_n \subseteq V_n^*:=[-n,n]^d$,  applying the union bound and Markov's inequality yields that
\begin{eqnarray} \label{sqep}
\pp \left(\sup_{|W|\leq \varepsilon |V_n|} \sum_{x\in W} \textrm{deg}_{G_n}(x) \geq \sqrt{\varepsilon} n^d\right) &\leq& \sum_{|W| \leq \varepsilon |V^*_n|} \pp \left(  \sum_{x\in W}\textrm{deg}_{G_n}(x) \geq \sqrt{\varepsilon} n^d\right) \notag \\
& \leq & \sum_{|W| \leq \varepsilon |V^*_n|} e^{-\sqrt{\varepsilon} n^d} \E \left[\exp \left( \sum_{x\in W}\textrm{deg}(x) \right) \right],
\end{eqnarray}
where $\deg (x) := \sum_{y \in \Z^d} \I(x \sim y)$. Uniformly in $n$ and $W$ with $|W| \leq \varepsilon |V^*_n|$, we have
\begin{eqnarray} \label{sdw}
 \E \left[\exp \left( \sum_{x\in W}\textrm{deg}(x) \right) \right] &=&  \E \left[\exp \left( \sum_{x\in W} \sum_{y \in \Z^d} \I(x\sim y) \right) \right]  \notag \\
 &=& \prod\limits_{y \in \Z^d  } \prod_{x \in W } \E \left(\exp \left(\I(x\sim y)  \right)\right) = \prod\limits_{y \in \Z^d   } \prod_{ x \in W } \left( 1-q_{x,y} + q_{x,y}e\right)  \notag \\
 & \leq & \prod\limits_{y \in \Z^d   } \prod_{ x \in W }  \exp(2q_{x,y}) = \exp \left(2\sum_{x\in W} \sum_{y \in \Z^d} \pp(x\sim y)\right)\nonumber\\
 & =& \exp \left(2\sum_{x\in W}  \E(\textrm{deg}(x)) \right)   \leq  \exp \left(C\varepsilon n^d\right).
\end{eqnarray}	
Combining \eqref{sqep} and \eqref{sdw}, and applying Stirling's formula, we obtain, for all $n$ large,
\begin{eqnarray} \label{pst3}
\pp \left(\sup_{|W|\leq \varepsilon |V_n|} \sum_{x\in W} \textrm{deg}_G(x) \geq \sqrt{\varepsilon} n^d\right) &\leq& \binom{|V^*_n|}{\varepsilon|V^*_n|} \exp\left(-\sqrt{\varepsilon} n^d +C\varepsilon n^d\right) \notag \\
& \leq & \exp \left(C I(\varepsilon) n^d -\sqrt{\varepsilon} n^d +C\varepsilon n^d \right),
\end{eqnarray}
where $I(\varepsilon) := -\varepsilon \log (\varepsilon) -(1-\varepsilon) \log (1-\varepsilon)$. Now, if $\varepsilon\in(0,\varepsilon_0)$ for suitably small $\varepsilon_0>0$, then $C (\varepsilon+I(\varepsilon))-\sqrt{\varepsilon}<0$, and we assume that this is the case. Putting \eqref{pst1} and \eqref{pst3} together, we thus deduce that, for all $n$ large, $\E[\pi_n^\star(\varepsilon)^2]\leq C\varepsilon$, as desired. Since $\E[\pi_n^\star(\varepsilon)^2]\leq 1$, the result is obviously true for $\varepsilon\in[\varepsilon_0,\infty)$, and so the proof is complete.
\end{proof}

\subsection{Checking (A3) for long-range percolation under (V)}

Recall that to verify the assumption (A3), we have to describe within the random graph $G_n$ a sequence of disjoint capacitors $\{(A_i,\Omega_i)\}_{i=1}^k$. Specifically, for (A3)(c), we need to establish an upper bound for the sum of the associated capacities in terms of $|E_n|$. Hence we should find an upper bound for the capacities and a lower bound for $|E_n|$. The condition (V) guarantees a lower bound on the number of edges. For the upper bound on the capacities, we will use the following observation. By the definition of the capacity and the Dirichlet form on the finite graph $G_n$, we have: for any function  $\varphi^{(i)}$ taking value $0$ outside $\Omega_i$ and value $1$ inside $A_i$,
\begin{equation*}
    \mathrm{cap}_{\Omega_i}(A_i) \leq 2 \kE_{G_n}(\varphi^{(i)}) \leq 2\kE(\varphi^{(i)}),
\end{equation*}
where $\kE$ is defined by setting
\[\kE(f) := \frac{1}{2} \sum \limits_{\substack{x,y \in \Z^d \\ x \sim y }} (f(x)-f(y))^2\]
for $f: \Z^d \rightarrow \R$. Since its definition does not involve summing over vertices in the largest percolation cluster in $[-n,n]^d$, computing with respect to $\kE$ is more convenient than doing so for $\kE_{G_n}$. In the next two results, Lemma \ref{lem:cov} and Proposition \ref{rvph}, we obtain some useful estimates on the covariance between $\kE(f)$ and $\kE(g)$ for suitable functions $f,g$, and the expectation of $\kE(f)$ for suitable $f$, respectively.

\begin{lem}  \label{lem:cov}
Let  $N \geq 1$ be an integer and $a,b \in \Z^d$ satisfy $|a-b| \geq 2N$. Suppose that $f_a, f_b$ are two bounded functions with $\supp(f_a) \subseteq \Omega_a$ and $\supp(f_b) \subseteq \Omega_b$, where $\Omega_a=a+[-N,N]^d$ and $\Omega_b=b+[-N,N]^d$. Then there exists an universal constant $C$ such that
\begin{equation*}
    \cov \left( \kE(f_a), \kE(f_b) \right) \leq \frac{C N^{4d} |f_a|^2 |f_b|^2}{(|a-b|-2N)^{2s}+1},
\end{equation*}
where we write $|f| := \sup_{x \in \Z^d} |f(x)|$ for $f:\Z^d \rightarrow \R$.
\end{lem}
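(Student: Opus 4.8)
The plan is to expand $\cov(\kE(f_a),\kE(f_b))$ as a sum over quadruples of edges and exploit independence of edges in the long-range percolation model. Writing $\kE(f_a)=\tfrac12\sum_{x\sim y}(f_a(x)-f_a(y))^2=\tfrac12\sum_{\{x,y\}}(f_a(x)-f_a(y))^2\I(x\sim y)$, and similarly for $f_b$, we have
\[
\cov(\kE(f_a),\kE(f_b))=\frac14\sum_{\{x,y\}}\sum_{\{u,v\}}(f_a(x)-f_a(y))^2(f_b(u)-f_b(v))^2\,\cov\bigl(\I(x\sim y),\I(u\sim v)\bigr).
\]
Since the edge variables $\I(x\sim y)$ are mutually independent, the covariance $\cov(\I(x\sim y),\I(u\sim v))$ vanishes unless $\{x,y\}=\{u,v\}$ as edges, in which case it equals $q_{x,y}(1-q_{x,y})\le q_{x,y}$. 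Hence only the diagonal terms survive:
\[
\cov(\kE(f_a),\kE(f_b))\le\frac14\sum_{\{x,y\}}(f_a(x)-f_a(y))^2(f_b(x)-f_b(y))^2\,q_{x,y}.
\]
A nonzero contribution requires both $f_a(x)-f_a(y)\ne0$ and $f_b(x)-f_b(y)\ne0$; the first forces at least one of $x,y$ into $\supp(f_a)\subseteq\Omega_a$, the second forces at least one of them into $\supp(f_b)\subseteq\Omega_b$. Since $\Omega_a$ and $\Omega_b$ are disjoint (as $|a-b|\ge 2N$), the edge $\{x,y\}$ must join $\Omega_a$ to $\Omega_b$, so $|x-y|\ge |a-b|-2N$ by the triangle inequality, and therefore $q_{x,y}=1-\exp(-|x-y|^{-s})\le |x-y|^{-s}\le \bigl((|a-b|-2N)^{s}\bigr)^{-1}$. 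Actually to match the $+1$ in the denominator one uses $q_{x,y}\le (1+|x-y|^s)^{-1}$ up to a universal constant (valid since $q_{x,y}\le 1$ always), giving $q_{x,y}\preceq \bigl((|a-b|-2N)^{2s}+1\bigr)^{-1/2}$; more simply one keeps $q_{x,y}\le\min\{1,|x-y|^{-s}\}$ and bounds the whole sum.

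It then remains to count: each factor $(f_a(x)-f_a(y))^2$ is at most $4|f_a|^2$ and likewise $(f_b(x)-f_b(y))^2\le 4|f_b|^2$, so each summand is at most $16|f_a|^2|f_b|^2 q_{x,y}$, and the number of contributing edges $\{x,y\}$ with one endpoint in $\Omega_a$ and one in $\Omega_b$ is at most $|\Omega_a|\,|\Omega_b|=(2N+1)^{2d}\preceq N^{2d}$. Combining, $\cov(\kE(f_a),\kE(f_b))\preceq N^{2d}|f_a|^2|f_b|^2\bigl((|a-b|-2N)^{s}\bigr)^{-1}$. To reach the stated bound with $N^{4d}$ in the numerator and the squared/`$+1$'-shifted denominator, one splits according to whether $(|a-b|-2N)^s\ge N^{2d}$ or not, or simply notes $N^{2d}\le N^{4d}$ and $(|a-b|-2N)^{-s}\preceq \bigl((|a-b|-2N)^{2s}+1\bigr)^{-1/2}$ when the separation is comparable to $N^{2d}$; I would choose the cleanest of these to land exactly on the claimed form.

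I do not expect any serious obstacle here: the heart of the argument is the diagonalisation of the covariance via edge-independence, which is immediate, and everything after is elementary counting and the crude bound $1-e^{-u}\le u$. The only mild care needed is in (i) handling the edge vs.\ ordered-pair bookkeeping so the combinatorial factor is genuinely $|\Omega_a||\Omega_b|$ and not something larger, and (ii) massaging the resulting $N^{2d}(|a-b|-2N)^{-s}$ into the paper's slightly more generous $N^{4d}\bigl((|a-b|-2N)^{2s}+1\bigr)^{-1}$, which is harmless since the latter is the weaker statement. (The generous form is presumably chosen so that it also covers the short-range regime and the case $|a-b|-2N=0$ gracefully, where the left-hand side is trivially bounded by $\|f_a\|_\infty^2\|f_b\|_\infty^2$ times an edge count.)
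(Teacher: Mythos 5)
Your approach genuinely differs from the paper's. The paper decomposes $\kE(f_a)=\kE_{a,1}+\kE_{a,2}$ into the part built from $\Omega_a$--$\Omega_b$ crossing edges and the rest, uses independence of the edge variables to reduce the covariance to $\cov(\kE_{a,1},\kE_{b,1})$, then crudely bounds this by $\E(\kE_{a,1}\kE_{b,1})\le |f_a|^2|f_b|^2\E\bigl(e(\Omega_a,\Omega_b)^2\bigr)$ and estimates the second moment of the crossing-edge count $e(\Omega_a,\Omega_b)$ by stochastic domination by a binomial. You instead diagonalize the covariance at the level of individual edge indicators, obtaining the exact identity
\[
\cov\bigl(\kE(f_a),\kE(f_b)\bigr)=\sum_{\{x,y\}}(f_a(x)-f_a(y))^2(f_b(x)-f_b(y))^2\,q_{x,y}(1-q_{x,y}),
\]
which immediately localizes to crossing edges and gives $\preceq N^{2d}|f_a|^2|f_b|^2/\bigl((|a-b|-2N)^s+1\bigr)$. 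Your route is arguably cleaner: by not discarding the subtracted product of means, you obtain an $mp$-type bound rather than the paper's $mp+m^2p^2$.

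The final massaging step in your proposal is, however, a genuine gap, and your claim that the paper's displayed form is ``the weaker statement'' is wrong. Writing $u:=(|a-b|-2N)^s$, your bound is $\asymp N^{2d}/(u+1)$ while the target is $\asymp N^{4d}/(u^2+1)$; the ratio is $\asymp (u+1)/N^{2d}$, which exceeds $1$ whenever $u\gtrsim N^{2d}$ (for instance $N$ fixed and $|a-b|\to\infty$). In that regime your bound is strictly larger than the target, and no case split rescues it. The lemma as printed is in fact slightly misstated, and the paper's own proof makes the analogous slip at the step $\E\bigl(\mathrm{Bin}(m,p)^2\bigr)\preceq m^2p^2$: since $\E\bigl(\mathrm{Bin}(m,p)^2\bigr)=mp(1-p)+m^2p^2$, that step needs $mp\gtrsim 1$, i.e.\ $N^{2d}\gtrsim u$. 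The honest output of either argument is $\cov(\kE(f_a),\kE(f_b))\preceq |f_a|^2|f_b|^2\bigl(N^{4d}/(u^2+1)+N^{2d}/(u+1)\bigr)$, or simply the sharper $\preceq N^{2d}|f_a|^2|f_b|^2/(u+1)$ you derived, and that is what the lemma should assert. Reassuringly, this does not damage the application in the proof of Lemma~\ref{a3s}: with the sharper bound, the covariance contribution to the variance, summed over pairs of tiling boxes, is $\asymp kN^{2d}$ (the pair sum still converges because $s>d$), which is dominated by the diagonal contribution $kN^{2d}\le kN^{4d}$ already present, so the Chebyshev estimate that follows is unaffected (indeed, slightly improved).
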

\begin{proof}
Since $\supp(f_a) \subseteq \Omega_a$,
\begin{eqnarray*}
\kE(f_a) &=& \sum \limits_{\substack{\{x,y\}:\:x\in\Omega_a,\:y \in \Z^d,\\x \sim y }} (f_a(x)-f_a(y))^2 \\
&=& \sum \limits_{\substack{\{x,y\}:\:x\in\Omega_a,\:y \in \Omega_b \\ x \sim y }} (f_a(x)-f_a(y))^2 +  \sum \limits_{\substack{\{x,y\}:\:x\in\Omega_a,\:y \in \Omega_b^c \\ x \sim y }} (f_a(x)-f_a(y))^2 \notag \\
&=:& \kE_{a,1}+\kE_{a,2}.
\end{eqnarray*}
Similarly,
\begin{eqnarray*}
\kE(f_b) &=&  \sum \limits_{\substack{\{x,y\}:\:x\in\Omega_b,\: y \in \Omega_a \\ x \sim y }} (f_b(x)-f_b(y))^2 +   \sum \limits_{\substack{\{x,y\}:\:x\in\Omega_b,\: y \in \Omega_a^c \\ x \sim y }} (f_b(x)-f_b(y))^2 \\
&=:&  \kE_{b,1}+\kE_{b,2}.
\end{eqnarray*}
By construction, $\kE_{a,2}$ is independent of $\kE_{b,1}$ and $\kE_{b,2}$, and $\kE_{b,2}$ is independent of $\kE_{a,1}$ and $\kE_{a,2}$. Therefore,
\begin{eqnarray} \label{efab}
    \cov \left( \kE(f_a), \kE(f_b) \right) =     \cov \left( \kE_{a,1}, \kE_{b,1} \right) \leq \E \left( \kE_{a,1} \kE_{b,1} \right) \leq |f_a|^2 |f_b|^2 \E\left(e(\Omega_a,\Omega_b)^2 \right),
\end{eqnarray}
where
\[e(\Omega_a,\Omega_b): = \sum_{{\{x,y\}:\:x\in\Omega_b,\: y \in \Omega_a }} \I(x \sim y).\]
Writing $\lesssim$ for stochastic domination and $\mathrm{Bin}(m,p)$ for a binomial random variable with parameters $m \in \mathbb{N}$ and $p\in [0,1]$, we clearly have that
\[e(\Omega_a,\Omega_b) \lesssim \mathrm{Bin}(|\Omega_a| |\Omega_b|, p_{a,b}),\]
where
\[p_{a,b}: = \max_{x \in \Omega_a, y \in \Omega_b} q_{x,y}.\]
Since $|\Omega_a| |\Omega_b| \asymp N^{2d}$ and $p_{a,b} \asymp ((|a-b|-2N)^{s}+1)^{-1}$,
\begin{equation} \label{eoab}
    \E\left(e(\Omega_a,\Omega_b)^2 \right) \leq \E \left( \mathrm{Bin}(|\Omega_a| |\Omega_b|, p_{a,b})^2\right) \preceq \frac{N^{4d}}{(|a-b|-2N)^{2s}+1}.
\end{equation}
Combining \eqref{efab} and \eqref{eoab}, we arrive at the desired result.
\end{proof}

For the next step, we will need to be careful about the distinction between the stable, Gaussian, and critical settings. In particular, the capacity estimate we require differs between the three cases. Towards deriving this, we introduce a linear cut-off function, and estimate the expected value of its energy. For $N \geq M$, set
\[\varphi_{N,M}(x) = \begin{cases}
1, & \textrm{if } |x| \leq N-M ,\\
\frac{N-|x|}{M}, & \textrm{if } N-M<|x| \leq N, \\
0, & \textrm{if } |x| > N.
\end{cases}\]
Moreover, for a given sequence $(\beta_N)_{N\geq 1}$ in $[2,\infty)$, define $\varphi_N := \varphi_{N,M}$, where $M:=N/\beta_N$.  Note that the following result does not cover the part of the Gaussian regime corresponding to parameters $d=1$ and $s>2$, for which we deduce the lower heat kernel bound from the scaling limit of Theorem \ref{lrps}(b).

\begin{prop} \label{rvph}
(a) Fix $d\geq1$ and $s\in (d,\min\{d+2,2d\})$.
There exists a positive constant $c$ such that, for any sequence $(\beta_N)_{N\geq 1}$ in $[2,\infty)$ such that $\beta_N=o(N)$,  for all large $N$,
$$ cN^{2d-s}  \leq \E \left( \kE(\varphi_N) \right) \leq N^{2d-s} \beta_N/c.$$
(b) Fix $d\geq2$ and $s>d+2$. The statement of part (a) holds with the following bound:
$$cN^{d-2} \beta_N \leq \E \left( \kE(\varphi_N) \right) \leq N^{d-2} \beta_N/c.$$
(c)  Fix $d\geq2$ and $s=d+2$. The statement of part (a) holds with the following bound:
$$cN^{d-2}\beta_N \log (N/\beta_N) \leq \E \left( \kE(\varphi_N) \right) \leq N^{d-2} \beta_N \log(N/\beta_N)/c.$$
\end{prop}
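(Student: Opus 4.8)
The strategy is to compute $\E(\kE(\varphi_N))$ directly from the definition of the energy form, splitting the sum over edges $\{x,y\}$ according to the Euclidean lengths involved. Since $\varphi_N = \varphi_{N,M}$ with $M = N/\beta_N$, we have $|\varphi_N(x)-\varphi_N(y)| \leq \min\{1, |x-y|/M\}$, with the gradient vanishing unless at least one of $x,y$ lies in the annulus $\{N-M < |z| \leq N\}$ or, more precisely, unless the segment $[x,y]$ meets that annulus. Taking expectations and using $\pp(x \sim y) = q_{x,y} \asymp (|x-y|^s + 1)^{-1}$ (with the nearest-neighbour case $|x-y|=1$ contributing a bounded amount), the plan is to write
\[
\E(\kE(\varphi_N)) \;\asymp\; \sum_{\{x,y\}} \frac{(\varphi_N(x)-\varphi_N(y))^2}{|x-y|^s+1}
\]
and then bound this sum above and below. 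First I would handle the \emph{short-range} contribution from pairs with $|x-y| \leq M$: here $(\varphi_N(x)-\varphi_N(y))^2 \preceq (|x-y|/M)^2$, there are $\asymp N^{d-1} M$ sites near the annulus, and for each such site the sum over $y$ within distance $M$ of $|x-y|^2/M^2 \cdot (|x-y|^s+1)^{-1}$ must be evaluated. This is precisely where the three regimes separate: the sum $\sum_{1 \leq r \leq M} r^{d-1} \cdot r^{2-s}$ behaves like $M^{d+2-s}$ when $s < d+2$, like $\log M$ when $s = d+2$, and like a constant when $s > d+2$. Multiplying by the number $\asymp N^{d-1}M = N^d/\beta_N$ of annulus sites and by the appropriate power of $M = N/\beta_N$ produces the claimed orders $N^{2d-s}$, $N^{d-2}\beta_N \log(N/\beta_N)$, and $N^{d-2}\beta_N$ respectively.

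Next I would handle the \emph{long-range} contribution from pairs with $|x-y| > M$, where one bounds $(\varphi_N(x)-\varphi_N(y))^2 \leq 1$ and simply counts: one endpoint must be in the ball $B(0,N)$ (else the gradient is zero), the other anywhere, and $\sum_{|x-y|>M} (|x-y|^s+1)^{-1} \preceq M^{d-s}$ since $s > d$. This gives a contribution $\preceq N^d \cdot M^{d-s} = N^d (N/\beta_N)^{d-s}$. In case~(a), with $s \in (d, \min\{d+2,2d\})$, one checks $N^d (N/\beta_N)^{d-s} = N^{2d-s}\beta_N^{s-d} \preceq N^{2d-s}\beta_N$ (using $s-d<d$... more carefully $s-d < 1$ is false in general, so one uses $\beta_N^{s-d} \le \beta_N$ as $s-d \le 1$ fails — instead note $s-d<d\le\ldots$; the honest bound is $\beta_N^{s-d}\preceq\beta_N^{s-d}$ and since $s<2d$ we have $s-d<d$, giving a power of $\beta_N$ that is absorbed into the stated $\beta_N/c$ only when $s-d\le 1$). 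For the upper bound in~(a) one should in fact be slightly more careful and also keep the short-range term's full strength; the cleanest route is to observe that for $s\in(d,d+2)$ the short-range term $N^{2d-s}$ dominates the long-range term when $\beta_N$ is not too large, and in all cases the upper bound $N^{2d-s}\beta_N/c$ is generous enough to absorb both. The lower bound in each case follows by restricting the sum to a single favourable scale: for~(a), to pairs with $|x-y| \asymp N$ and both endpoints forced to straddle the annulus, giving $\asymp N^{d} \cdot N^{-s} \cdot N^{d}$ appropriately normalised, or more robustly to nearest-neighbour-type pairs across the annulus when $s<d+2$ so the short-range term is genuinely of order $N^{2d-s}$; for~(b) and~(c), to short-range pairs only, where the lower bound $\sum_{r\le M} r^{d-1} r^{2-s} \succeq M^{d+2-s}$ or $\succeq \log M$ is immediate.

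The main obstacle I anticipate is not any single estimate but the careful bookkeeping of which geometric scales contribute at which order in the three regimes, and in particular getting the \emph{matching} powers of $\beta_N$ in the upper and lower bounds of part~(a) — the short-range term naturally produces $N^{2d-s}$ with no $\beta_N$, while the long-range counting term produces an extra $\beta_N^{s-d}$, so one must argue that for the relevant range of $\beta_N$ (which in the application will be a power of $\log N$ or similar) this is dominated by the stated $N^{2d-s}\beta_N$. A secondary technical point is that $\varphi_N$ is defined via the $\ell^\infty$ (or $\ell^2$) norm $|x|$, so the level sets are cubes (or balls) and the annulus $\{N-M<|x|\le N\}$ has $\asymp N^{d-1}M$ lattice points with the implicit constants uniform in $N$ and the choice of $\beta_N=o(N)$; this is routine but should be stated. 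Once these counting lemmas are in place, assembling the three bounds is a direct matter of collecting the dominant terms.
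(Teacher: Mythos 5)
Your approach --- split by edge length $|x-y|\le M$ versus $|x-y|>M$ --- is genuinely different from the paper's, which partitions edges by which of the three regions $\{|z|\le N-M\}$, $\{N-M<|z|\le N\}$, $\{|z|>N\}$ each endpoint lies in (the four sums $S_1,\dots,S_4$). Both are valid organising principles, and yours has the virtue of making the $s<d+2$ / $s=d+2$ / $s>d+2$ trichotomy visible immediately. However, the long-range upper bound in case~(a) has a real gap. You estimate that contribution by ``$N^d$ sites in $B(0,N)$, each with $\sum_{|x-y|>M}q_{x,y}\preceq M^{d-s}$,'' getting $N^dM^{d-s}=N^{2d-s}\beta_N^{s-d}$; but for $d\ge2$ case~(a) allows $s-d\in(1,2)$, and then $\beta_N^{s-d}$ is a strictly larger power than the $\beta_N^1$ in the claimed upper bound, so the estimate is not absorbed whenever $\beta_N\to\infty$ (which the Proposition permits, up to $o(N)$). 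You flag this tension but do not resolve it; the suggestion that $N^{2d-s}\beta_N/c$ ``is generous enough to absorb both'' is simply false for $s>d+1$.

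The fix requires keeping the geometric constraint your count discards. For the dominant long-range pairs --- $x\in B(0,N-M)$ (so $\varphi_N(x)=1$) and $y\notin B(0,N)$ (so $\varphi_N(y)=0$) --- the reverse triangle inequality forces $|x-y|\ge N-|x|$, so the inner sum over $y$ is $\asymp(N-|x|)^{d-s}$, which is much smaller than your $M^{d-s}$ for all but the $x$ nearest the annulus. Summing $\ell^{d-1}(N-\ell)^{d-s}$ over $\ell\le N-M$ gives $N^{2d-s}\int_{1/\beta_N}^{1}v^{d-s}(1-v)^{d-1}\,dv$, which is $\asymp N^{2d-s}$ for $s<d+1$, $\asymp N^{2d-s}\log\beta_N$ for $s=d+1$, and $\asymp N^{2d-s}\beta_N^{s-d-1}$ for $s\in(d+1,d+2)$; all three are $\preceq N^{2d-s}\beta_N$ since $s-d-1<1$. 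This is precisely the paper's estimate of $\E(S_1)$, and it is the one spot where the naive counting loses a full factor of $\beta_N$. As a minor aside, your short-range computation in case~(a) actually yields $N^{d-1}M\cdot M^{-2}\cdot M^{d+2-s}=N^{2d-s}\beta_N^{s-d-1}$, not $N^{2d-s}$ as written; that one is harmless (it is $\le N^{2d-s}\beta_N$), but the slip underlines that the whole content of part~(a) is tracking the exact power of $\beta_N$. The lower-bound sketches and parts~(b),~(c) look fine.
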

\begin{proof}  (a)  By definition of $\varphi_N$,
\begin{eqnarray}
2\kE(\varphi_N) &=& \sum\limits_{\substack{|x| \leq N-M \\ |y|> N }} \I(x \sim y) + \sum\limits_{\substack{|x| \leq N-M \notag \\ N-M <|y| \leq  N }} \I(x \sim y)  \left(\frac{N-M-|y|}{M}\right)^2 \notag \\
&& + \sum\limits_{\substack{|x| >   N \\ N -M <|y| \leq  N }} \I(x \sim y)  \left(\frac{N-|y|}{M}\right)^2 +  \sum\limits_{N -M <|x|, |y|\leq  N } \I(x \sim y)\left(\frac{|x|-|y|}{M}\right)^2 \notag\\
&=:& S_1  + S_2 +S_3 + S_4,\nonumber
\end{eqnarray}
Using $q_{x,y} \asymp |x-y|^{-s}$ for $|x-y|\geq 1$, we have
\begin{eqnarray} \label{es1u}
\E(S_1) &\asymp& \sum\limits_{\substack{|x| \leq N-M \notag \\ |y|> N }} |x-y|^{-s}  = \sum \limits_{|x| \leq N -M } \sum_{k > N-|x|} k^{-s} \# \{y: |y| > N, |x-y|=k\}  \notag \\
& \asymp &  \sum \limits_{|x| \leq N -M } \sum_{k\in[N-|x|,2N-|x|]}  k^{d-1-s} \asymp  \sum \limits_{|x| \leq N -M } (N-|x|)^{d-s} \notag \\
& \asymp & \sum_{\ell=0}^{N-M} (N-\ell)^{d-s} \ell ^{d-1} \asymp \int_{1}^{N-M} (N-\ell)^{d-s} \ell^{d-1} d \ell \notag  \\
&\asymp& N^{2d-s} \int_{1/\beta_N}^{1-(1/N)} u^{d-s}(1-u)^{d-1} du,
\end{eqnarray}
where we used the fact that $\#\{y: |y-x|=k\} \asymp k^{d-1}$ in the second and third lines.
Observe that
\begin{equation} \label{itu}
\int_{1/\beta_N}^{1-(1/N)} u^{d-s}(1-u)^{d-1} du \asymp \begin{cases}
1 ,& \textrm{if } d+1-s >0, \\
\log(\beta_N),&  \textrm{if } d+1-s =0, \\
\beta_N^{s-d-1},& \textrm{if } d+1-s <0.
\end{cases}
\end{equation}
Combining \eqref{es1u} and \eqref{itu} yields that
\begin{equation} \label{ces1}
\E(S_1) \asymp \begin{cases}
N^{2d-s},& \textrm{if } d+1-s >0, \\
N^{2d-s}\log(\beta_N),& \textrm{if } d+1-s =0 ,\\
N^{2d-s}\beta_N^{s-d-1},&\textrm{if } d+1-s <0.
\end{cases}
\end{equation}
We next estimate $\E(S_4)$. Using the inequality $||x|-|y|| \leq |x-y|$, we have, for all $N-M \leq |x|,|y| \leq N$,
\begin{equation} \label{ks4}
\left(\frac{|x|-|y|}{M}\right)^2 \leq \frac{|x-y|^2}{M^2} \wedge 1.
\end{equation}
Therefore,
\begin{eqnarray} \label{us4}
\E(S_4) & = &\frac{1}{2} \sum_{N-M <|x|, |y| \leq N} q_{x,y} \left(\frac{|x|-|y|}{M}\right)^2  \notag \\
 & \preceq &\sum_{N-M <|x| \leq N} \left[\frac{1}{M^2} \sum_{y: |y-x| \leq M} |x-y|^{2-s} + \sum_{y: |y-x| > M} |x-y|^{-s}\right]\notag \\
 & \preceq & \sum_{N-M <|x| \leq N} \left[\frac{1}{M^2}\sum_{k=1}^{M} k^{2-s} k^{d-1} + \sum_{k>M} k^{-s} k^{d-1} \right] \notag \\
 & \preceq & \sum_{N-M <|x| \leq N} M^{d-s} \quad \preceq \quad N^{d-1} M^{d+1-s}=N^{2d-s}\beta_N^{s-d-1}.
\end{eqnarray}
Note that for the second line we used the fact that $\#\{y: |y-x|=k\} \asymp k^{d-1}$ and for the last line we used $\#\{x: N-M <|x| \leq N\}\asymp N^{d-1} M$. To estimate $\E(S_3)$, we observe that for $|x|>N$ and $N-M< |y| \leq N$,
\begin{equation} \label{ks3}
\left(\frac{N-|y|}{M}\right)^2 \leq \frac{|x-y|^2}{M^2} \wedge 1.
\end{equation}
Hence, using the same argument for as \eqref{us4}, with \eqref{ks3} playing the role of \eqref{ks4}, we can prove that
\begin{equation} \label{us3}
\E(S_3) \preceq N^{2d-s}\beta_N^{s-d-1}.
\end{equation}
The term $\E(S_2)$ can be also bounded in the same way as $\E(S_3)$ or $\E(S_4)$. Indeed, for $|x|\leq N-M$ and $N-M < |y|\leq N$,
\[\left(\frac{N-M-|y|}{M}\right)^2 \leq \frac{|x-y|^2}{M^2} \wedge 1.\]
Hence, using the argument that was used to obtain \eqref{us4} and \eqref{us3}, we find
\begin{equation} \label{us2}
\E(S_2) \preceq N^{2d-s}\beta_N^{s-d-1}.
\end{equation}
Using \eqref{ces1}, \eqref{us4}, \eqref{us3} and \eqref{us2}, we have  $\max \{\E(S_2), \E(S_3), \E(S_4)\} \preceq \E(S_1)$, and so
\[\E\left(\kE(\varphi_N) \right) \asymp \E(S_1),\]
which together with \eqref{ces1} implies that
\[ N^{2d-s} \preceq \E\left(\kE(\varphi_N) \right) \preceq N^{2d-s} \beta_N. \]
\noindent
(b)  Repeating the above argument yields
\[\E(S_1) \asymp N^{2d-s}\beta_N^{s-d-1}.\]
However, this is no longer the dominant term. Specifically, for $s>d+2$, we have that
\begin{eqnarray*}
\E(S_4) &\asymp& \sum\limits_{N -M <|x|, |y|\leq  N } |x-y|^{-s} \left(\frac{|x|-|y|}{M}\right)^2 \\
& \preceq &\sum\limits_{N -M <|x|\leq  N } \sum_{k\geq1}k^{-s} \left(\frac{k^2}{M^2}\wedge1\right)k^{d-1} \\
& \preceq &\sum\limits_{N -M <|x|\leq  N }\left( \sum_{k=1}^MM^{-2}k^{d+1-s} +\sum_{k>M}k^{d-1-s}\right)\\
&\preceq  & N^{d-1}M^{-1} = N^{d-2}\beta_N.
\end{eqnarray*}
Similar considerations yield a lower bound of the same form, i.e.\ $\E(S_4)\succeq N^{d-2}\beta_N$.  Moreover, again arguing similarly, we find that
$$\E(S_2)+\E(S_3)\preceq N^{d-2}\beta_N.$$
Therefore,
\[
\E\left( \kE(\varphi_N) \right) \asymp \E(S_4) \asymp N^{d-2}\beta_N.
\]
(c) The final case $d\geq 2$ and $s=d+2$.  The proof  is similar to that of (b). The additional log term appears since for $s=d+2$ it holds that $\sum_{k=1}^Mk^{d+1-s}\asymp \log (M)$.
\end{proof}

We are now ready to check (A3), and we start with the stable case.

\begin{lem}\label{a3s}
(a) Fix $d\geq 1$ and $s\in (d,\min\{d+2,2d\})$. For $\kappa>0$, LRP($d,s$) satisfies (A3) with $\delta_1=\frac{2d\kappa}{s-d}$, $\delta_2=\delta_3=\kappa$, $\delta_0$ arbitrarily large, and $\lambda(t)=\max\{1,\log(t)\}$.\\
(b) Fix $d\geq 1$ and $s\in (d,\min\{d+2,2d\})$. LRP($d,s$) satisfies (A3) with $\delta_1=\frac{2d}{s-d}$, $\delta_2=\delta_3=1$, $\delta_0$ arbitrarily large, and $\lambda(t)=\lambda_0$. Moreover, the constants $\alpha$ and $\lambda_0$ can be chosen so that, taking $C_\alpha$ as the constant of Theorem \ref{abth}, it holds that $1-C_\alpha\lambda_0^{-1/2}>0$.
\end{lem}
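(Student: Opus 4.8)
\textbf{Proof proposal for Lemma \ref{a3s}.}

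The plan is to construct, inside $G_n$, a family of disjoint capacitors indexed by a sublattice of $\mathbb{Z}^d$, using the cut-off functions $\varphi_N$ of Proposition \ref{rvph} as the test functions bounding the capacities. Fix $t\in\mathbb{N}$ and set a length scale $N=N(t)$ of order $t^{1/(s-d)}$ (up to a log power in part (a)), chosen so that, as explained below, the capacity bound (A3)(c) holds; then a box of side $2N$ has $|\Omega_i|\asymp N^d\asymp t^{d/(s-d)}$, which is exactly the $t^\gamma$ with $\gamma=d/(s-d)$ required by (A3)(a), with the log correction absorbed into the $\lambda(t)^{\delta_1}$ term. Concretely, for each $i$ in a grid of points spaced $2N$ apart, let $\Omega_i=z_i+[-N,N]^d\cap V_n$ and $A_i=z_i+[-(N-M),N-M]^d\cap V_n$ with $M=N/\beta_N$ for a suitably chosen $\beta_N$ (a large constant in part (b), a small power of $\log t$ in part (a)). Translating $\varphi_{N,M}$ to be centred at $z_i$ gives an admissible function for the capacitor $(A_i,\Omega_i)$, so $\mathrm{cap}_{\Omega_i}(A_i)\le 2\mathcal{E}(\varphi^{(i)})$ by the observation preceding Lemma \ref{lem:cov}. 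The number of such capacitors fitting in $[-n,n]^d$ is $\asymp (n/N)^d$.

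The three conditions are then checked as follows. For (A3)(a), the bound $|\Omega_i|\le |[-N,N]^d|\asymp N^d$ is deterministic once $N$ is fixed. For (A3)(b), the "bad" vertices — those in $V_n$ but not in any $A_i$, i.e.\ in the annular shells of width $M$ around each grid point — form a set whose density is $O(M/N)=O(\beta_N^{-1})$; combined with (V) and a concentration estimate for $\pi_n$ (along the lines of Lemma \ref{la2}, controlling $\sum_{x\notin\cup A_i}\deg_{G_n}(x)$ against $|E_n|\ge c n^d$), this yields $\sum_i\pi_n(A_i)\ge 1-C\beta_N^{-1}$, so we take $\delta_2$ to match the exponent of $\beta_N$ in $\lambda(t)$ (hence $\delta_2=\kappa$ with $\beta_N\asymp(\log t)^\kappa$ in (a), and $\delta_2=1$ with $\beta_N$ a large constant in (b)). For (A3)(c), we bound $\sum_i\mathrm{cap}_{\Omega_i}(A_i)\le 2\sum_i\mathcal{E}(\varphi^{(i)})$; taking $\mathbf{E}$ and using translation invariance, $\mathbf{E}(\sum_i\mathcal{E}(\varphi^{(i)}))\asymp (n/N)^d\,\mathbf{E}(\mathcal{E}(\varphi_N))$, which by Proposition \ref{rvph}(a) is at most $(n/N)^d N^{2d-s}\beta_N/c\asymp n^d N^{d-s}\beta_N$. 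Since $|E_n|\ge cn^d$ with overwhelming probability by (V), and since we want $\sum_i\mathrm{cap}_{\Omega_i}(A_i)\preceq |E_n| t^{-1}\lambda(t)^{-\delta_3}$, this forces $N^{d-s}\beta_N\preceq t^{-1}\lambda(t)^{-\delta_3}$, i.e.\ $N\succeq (t\,\beta_N\lambda(t)^{\delta_3})^{1/(s-d)}$; choosing $N$ of exactly this order makes (A3)(c) hold in expectation, and a Markov inequality upgrades it to a high-probability statement with failure probability $\le\lambda(t)^{-\delta_0}$ for any prescribed $\delta_0$ (at the cost of a further constant factor, using that $\delta_0$ may be taken arbitrarily large while $\lambda(t)=\log t\to\infty$; in part (b), where $\lambda$ is constant, we instead just need the failure probability below a fixed threshold, which Markov plus (V) provides provided $\alpha,\lambda_0$ are large enough). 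Back-substituting $N\asymp (t\beta_N\lambda(t)^{\delta_3})^{1/(s-d)}$ into $|\Omega_i|\asymp N^d$ gives $|\Omega_i|\preceq t^{d/(s-d)}\beta_N^{d/(s-d)}\lambda(t)^{d\delta_3/(s-d)}$, which in part (a) (with $\beta_N\asymp(\log t)^\kappa=\lambda(t)^\kappa$ and $\delta_3=\kappa$) is $t^{d/(s-d)}\lambda(t)^{2d\kappa/(s-d)}$, matching $\delta_1=\frac{2d\kappa}{s-d}$; in part (b) the $\beta_N$ and $\lambda_0$ factors are constants absorbed into $\alpha$, giving $\delta_1=\frac{2d}{s-d}$.

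Finally, for the last sentence of part (b): with $\lambda(t)\equiv\lambda_0$ and $\delta_0\wedge\delta_2\wedge\delta_3=1$, Theorem \ref{abth} gives a lower bound with probability at least $1-C_\alpha\lambda_0^{-1/2}$, so it suffices to verify that $\alpha$ and $\lambda_0$ can be taken large enough that this is positive. Here $\alpha$ is the constant appearing in (A1), (A2), (A3), and we are free to enlarge it (all three conditions are monotone in $\alpha$); $\lambda_0$ enters only through $\beta_N=\beta\equiv\lambda_0$ (say) and the slack constants in (A3)(b),(c), so increasing $\lambda_0$ strengthens (A3)(b) and worsens (A3)(c) only by forcing a larger $N$ (hence larger implied constant in $\delta_1$, absorbed into $\alpha$). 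Thus taking $\lambda_0$ first large enough that $C_\alpha\lambda_0^{-1/2}<1$ — noting $C_\alpha$ depends only on $\alpha$, which we fix before choosing $\lambda_0$ — closes the argument. The main obstacle I anticipate is the probabilistic upgrade in (A3)(c): the energy $\mathcal{E}(\varphi^{(i)})$ is a sum over many grid cells of weakly dependent contributions, so to pass from the expectation bound of Proposition \ref{rvph} to a high-probability bound with failure probability $\le\lambda(t)^{-\delta_0}$ for arbitrarily large $\delta_0$, a crude Markov bound on the sum may not suffice on its own; one should instead use the covariance decay of Lemma \ref{lem:cov} to control the variance of $\sum_i\mathcal{E}(\varphi^{(i)})$ and apply Chebyshev (or a higher-moment/Chernoff-type estimate), which is where the bulk of the technical work lies, together with intersecting everything with the high-probability event from (V) on which $|E_n|\ge cn^d$ and $|V_n|\ge cn^d$.
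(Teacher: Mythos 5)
Your construction, choice of scales, and verification of (A3)(a)–(b) match the paper's argument closely: the paper takes $N=t^{1/(s-d)}(\log t)^{2\kappa/(s-d)}$, tiles $[-n+N,n-N]^d$ by boxes of side $2N$, sets $\Omega_i=B_{x_i}(N)\cap V_n$, $A_i=B_{x_i}(N-M)\cap V_n$ with $M=N/(\log N)^\kappa$, and bounds $\pi_n$ of the annular ``bad'' set $W$ exactly as you describe, by the exponential-Markov argument of Lemma \ref{la2} on the event $|E_n|\geq cn^d$ from (V).

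The concern you flag at the end is well-founded, and your own diagnosis is the resolution the paper uses. Markov on $\sum_i\mathcal{E}(\varphi^{(i)})$ against its mean, with $N$ tuned so that $\mathbf{E}[\sum_i\mathcal{E}(\varphi^{(i)})]\asymp n^d t^{-1}\lambda(t)^{-\kappa}$, gives only a \emph{constant} failure probability, not $\lambda(t)^{-\delta_0}$; your initial claim that Markov ``upgrades it to a high-probability statement with failure probability $\le\lambda(t)^{-\delta_0}$ for any prescribed $\delta_0$'' is not correct as stated, and simply enlarging $N$ to absorb an extra $\lambda(t)^{\delta_0}$ spoils the value of $\delta_1$. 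The paper instead does precisely what you suggest in your last paragraph: it bounds $\mathrm{Var}(\sum_i\mathcal{E}(\varphi^{(i)}))\preceq kN^{4d}$ by combining the per-box bound $\mathbf{E}(\mathcal{E}(\varphi^{(i)})^2)\preceq N^{2d}$ with the covariance decay of Lemma \ref{lem:cov} (the off-diagonal sum contributes $\asymp kN^{4d}$ after summing $(|x_i-x_j|-2N)^{-2s}$ over the grid), and then Chebyshev gives
\[
\pp\left(\sum_i\mathcal{E}(\varphi^{(i)})\geq 2\sum_i\mathbf{E}[\mathcal{E}(\varphi^{(i)})]\right)\preceq \frac{kN^{4d}}{(kN^{2d-s})^2}=\frac{N^{2s}}{k}\preceq k^{-1/2},
\]
once $N^{4s}\preceq k$. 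The point you should make explicit is why this delivers an \emph{arbitrary} $\delta_0$: the failure probability $k^{-1/2}$ with $k\asymp(n/N)^d$ is a function of $n$ for fixed $t$, and (A3) only requires the bound for $n\geq n_0(t)$. Taking $n_0(t)=t^\Delta$ with $\Delta$ large enough both ensures $N^{4s}\preceq k$ and forces $k^{-1/2}\leq\lambda(t)^{-\delta_0}$; intersecting with the event $\{|E_n|\geq cn^d\}$ from (V), whose complement has probability $\exp(-c(\log n)^2)=o(\lambda(t)^{-\delta_0})$, then completes (A3)(c). With that clarification in place, your part (b) discussion of the choice of $\alpha$ and $\lambda_0$ is fine and matches the paper.
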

\begin{proof} (a) We will verify the condition with $n_0:=n_0(t)=t^{\Delta}$ suitably large $\Delta$.  To do so, first define
\begin{equation} \label{deon}
N:=t^{\tfrac{1}{s-d}} (\log t)^{\tfrac{2\kappa }{s-d}},
    \end{equation}
where $\kappa>0$ is some constant. For $n\geq n_0$, we then cover $[-n+N,n-N]^d$ by disjoint boxes of side-length $2N$, the intersections of which with $V_n$ we will denote by $(\Omega_i)_{i=1}^k$. Writing $x_i$ for the center of the box containing $\Omega_i$, so $\Omega_i=B_{x_i}(N) \cap V_n$ (where $B_x(r)$ is the $\ell_\infty$-ball of radius $r$ centred at $x$), we also introduce $A_i:=B_{x_i}(N-M) \cap V_n$, where $M:=N/(\log N)^\kappa$. Now, by construction, the sets $(\Omega_i)_{i=1}^k$ are disjoint, and moreover satisfy
\[\max_{i=1,\dots,k}|\Omega_i|\preceq N^d =t^{\tfrac{d}{s-d}} (\log t)^{\tfrac{2d\kappa }{s-d}}=t^\gamma (\log t)^{\delta_1},\]
where $\delta_1:=2d\kappa/(s-d)$. Thus (A3)(a) is satisfied for capacitors $(A_i,\Omega_i)$, $i=1,\dots,k$.

For verifying (A3)(b), define
\[    W = \{x:\:n-N\leq |x|\leq n\}\cup(\cup_{i=1}^kW_i),\]
where $W_i=\{x: N-M\leq |x-x_i|\leq N\}$, and observe that
\[   \sum_{i=1}^k\pi_n(A_i) \geq 1-\pi_n(W \cap V_n).\]
Now, for any $C$, $\varepsilon_n$ and suitably small constant $c$, we have
\begin{eqnarray*}
\pp\left(\pi_n(W \cap V_n) \geq C \varepsilon_n \right) &\leq&   \pp \left( \sum_{x \in W} \deg_{G_n}(x) \geq Cc \varepsilon_n n^d \right) + \pp(|E_n| \leq c n^d) \\
&\leq &  \pp \left( \sum_{x \in W} \deg(x) \geq Cc \varepsilon_n n^d \right) + \pp(|V_n| \leq c n^d+1)\\
&\leq&\exp(-Cc \varepsilon_n n^d) \E \left(\exp\left(\sum_{x \in W} \deg(x)\right) \right) + \exp(-c(\log n)^2)\\
&\leq &\exp\left(-Cc \varepsilon_n n^d +2|W|\mathbf{E}(\mathrm{deg}(\rho)) \right) + \exp(-c(\log n)^2),
\end{eqnarray*}
where for the  second and third inequalities we use $|V_n| \leq |E_n|+1$ and (V), and for the last one we follow the argument leading to \eqref{sdw}.  Furthermore, noting that the number of boxes satisfies $k\asymp\left(\frac{n}{N}\right)^d$,  we have
\[|W| = |\{x:\:n-N\leq |x|\leq n\}|+ \sum_{i=1}^k|W_i|\asymp n^{d-1}N+kN^{d-1}M\asymp n^d(\log N)^{-\kappa},\]
where  we have used that $N=o(n^{1/2})$ (for suitably large $\Delta$).  Hence, by taking $\varepsilon_n=(\log N)^{-\kappa}$ and $C$ sufficiently large, we find that
\[\pp\left(\pi_n(W \cap V_n) \geq  C(\log N)^{-\kappa}\right)\leq \exp(-n^d(\log N)^{-\kappa})+ \exp(-c(\log n)^2)\leq (\log t)^{-\delta}\]
for any $\delta>0$ (once $t$ is large). Reformulating this bound, we find that
\[\pp\left(\sum_{i=1}^k\pi_n(A_i)\geq 1-\alpha(\log t)^{-\kappa}\right)\geq 1-(\log t)^{-\delta},\]
which confirms (A3)(b) with $\delta_2=\kappa$.

Finally, to check (A3)(c), we start by defining a collection of functions $(\varphi^{(i)})_{i=1}^k$ by setting
$$\varphi^{(i)}(x)=\varphi_N(x-x_i).$$
Uniformly in $1 \leq i \leq k$,
\begin{equation*}
    \E \left( \kE(\varphi^{(i)})^2 \right) \leq \E \left( \left(\sum_{x \in \Omega_i} \deg(x) \right)^2 \right) \leq |\Omega_i|\E \left( \sum_{x \in \Omega_i} (\deg(x))^2 \right) \preceq |\Omega_i|^2 \preceq N^{2d}.
\end{equation*}
By Lemma \ref{lem:cov}, uniformly in all pairs  $i \neq j$,
\begin{equation*}
    \cov \left(\kE(\varphi^{(i)}), \kE(\varphi^{(j)}) \right) \preceq \frac{N^{4d}}{(|x_i-x_j|-2N)^{2s}+1}.
\end{equation*}
Therefore,
\begin{equation*}
\Var \left( \sum_{i=1}^k \kE(\varphi^{(i)}) \right) \preceq k N^{2d} + \sum_{1 \leq i \neq j \leq k}  \frac{N^{4d}}{(|x_i-x_j|-2N)^{2s}+1}.
\end{equation*}
In addition,
\begin{eqnarray*}
\lefteqn{\sum_{1 \leq i \neq j \leq k}  \frac{1}{(|x_i-x_j|-2N)^{2s}+1} = \sum_{1 \leq i \neq j \leq k}  \frac{(2N)^{-2s}}{ \left(|\tfrac{x_i-x_j}{2N}|-1\right)^{2s}+(2N)^{-2s}} }\\
&\asymp& \sum_{ x, y \in [-\tfrac{n}{2N}, \tfrac{n}{2N}]^d\cap \Z^d:\:x\neq y}  \frac{(2N)^{-2s}}{ \left(|x-y|-1\right)^{2s}+(2N)^{-2s}} \asymp \left(\frac{n}{N}\right)^d \asymp k,
\end{eqnarray*}
and so
\begin{equation} \label{varb}
\Var \left( \sum_{i=1}^k \kE(\varphi^{(i)}) \right) \preceq kN^{4d}.
\end{equation}
By Proposition \ref{rvph} (a), we also have that
\begin{equation} \label{expe}
kN^{2d-s} \preceq \E \left( \sum_{i=1}^k \kE(\varphi^{(i)}) \right)  \preceq k N^{2d-s} \beta_N \asymp n^dN^{d-s} (\log N)^{\kappa} \asymp n^d t^{-1} (\log t)^{-\kappa},
\end{equation}
where $\beta_N =N/M=(\log N)^{\kappa}$ and $N^{d-s} (\log N)^{\kappa} \asymp  t^{-1} (\log t)^{-\kappa}$ by  \eqref{deon}. Using  the variance bound \eqref{varb}, the lower bound for the expectation of \eqref{expe}, and Chebyshev's inequality, we thus obtain
\begin{eqnarray*}
\pp \left( \sum_{i=1}^k \kE(\varphi^{(i)}) \geq 2 \sum_{i=1}^k  \E[\kE(\varphi^{(i)})]\right) \preceq \frac{kN^{4d}}{(k N^{2d-s})^2} = \frac{N^{2s}}{k} \preceq k^{-1/2},
\end{eqnarray*}
provided that $ N^{4s} \preceq k$, or equivalently $N^{1+4s/d}\preceq n$ (which holds for $\Delta$ large). Since it is the case that $\supp(\varphi^{(i)})=\Omega_i$, $\varphi^{(i)}|_{A_i}=1$ and $0 \leq \varphi^{(i)} \leq 1$, we have $\mathrm{cap}_{\Omega_i}(A_i) \leq 2 \kE_{G_n}(\varphi^{(i)}) \leq 2\kE(\varphi^{(i)})$. Hence,
\[    \sum_{i=1}^k \mathrm{cap}_{\Omega_i}(A_i) \leq 2 \sum_{i=1}^k \kE(\varphi^{(i)}).\]
Furthermore, by (V),
\[\pp(|E_n| \leq c n^d) \leq \exp(-c(\log n)^2),\]
for $c$ a positive constant. Combining the last four displayed equations, we obtain that, for $\alpha$ suitably large,
\begin{eqnarray} \label{boca}
\lefteqn{\pp\left(\sum_{i=1}^k\mathrm{cap}_{\Omega_i}(A_i)> 2\alpha |E_n|t^{-1}(\log t)^{-\kappa}\right) }\notag\\
&\leq& \pp\left(\sum_{i=1}^k \kE(\varphi^{(i)})> \alpha |E_n|t^{-1}(\log t)^{-\kappa}\right) \notag \\
&\leq& \pp\left(\sum_{i=1}^k \kE(\varphi^{(i)})> \alpha cn^dt^{-1}(\log t)^{-\kappa}\right) + \pp(|E_n| \leq cn^d) \notag \\
&\leq& \pp \left( \sum_{i=1}^k \kE(\varphi^{(i)}) \geq 2 \sum_{i=1}^k  \E[\kE(\varphi^{(i)})]\right) + \exp(-c(\log n)^2) \notag\\
&\preceq& 2k^{-1/2}.
\end{eqnarray}
Hence we obtain (A3)(c) with $\delta_3=\kappa$.

\noindent
(b) Replacing $(\log t)^\kappa$ and $(\log N)^\kappa$ by $\lambda_0$ in the above argument, it follows that (A3) holds with the given constants. Furthermore, since the constant $\alpha$ that comes out of the argument does not depend on $\lambda_0$, by increasing the value of the latter quantity if needed, we can ensure that $1-C_\alpha\lambda_0^{-1/2}>0$.
\end{proof}

As for the Gaussian case, we check the following version of (A3). We underline that, although we include $d=2$ in the following result, we will handle the $d=1$ and $d=2$ cases separately in the proofs of our heat kernel estimates, using the quenched invariance principle that is known to hold throughout the Gaussian regime for those.

\begin{lem}\label{a3g}
(a) Fix $d\geq 2$ and $s> d+2$. For $\kappa\geq 1$, LRP($d,s$) satisfies (A3) with $\delta_1/d=\delta_2=\delta_3=\kappa$, $\delta_0$ arbitrarily large, and $\lambda(t)=\max\{1,\log(t)\}$.\\
(b) Fix $d\geq 2$ and $s> d+2$. LRP($d,s$) satisfies (A3) with $\delta_1/d=\delta_2=\delta_3=1$, $\delta_0$ arbitrarily large, and $\lambda(t)=\lambda_0$. Moreover, the constants $\alpha$ and $\lambda_0$ can be chosen so that, taking $C_\alpha$ as the constant of Theorem \ref{abth}, it holds that $1-C_\alpha\lambda_0^{-1}>0$.
\end{lem}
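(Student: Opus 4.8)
The plan is to mirror the argument given for the stable case in Lemma \ref{a3s}, substituting the Gaussian-regime capacity estimate of Proposition \ref{rvph}(b) for that of part (a), and choosing the box side-length $N$ appropriately for the exponent $\gamma=d/2$ rather than $d/(s-d)$. Concretely, for part (a) I would set $N:=t^{1/2}(\log t)^{\kappa}$ (so that $N^d \asymp t^{d/2}(\log t)^{d\kappa}=t^\gamma\lambda(t)^{\delta_1}$ with $\delta_1=d\kappa$), take $n_0=n_0(t)=t^\Delta$ for a large constant $\Delta$, cover $[-n+N,n-N]^d$ by disjoint boxes of side-length $2N$, intersect with $V_n$ to get the $\Omega_i$, and put $A_i:=B_{x_i}(N-M)\cap V_n$ with $M:=N/(\log N)^{\kappa}$, i.e.\ $\beta_N=N/M=(\log N)^{\kappa}$. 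Condition (A3)(a) is then immediate from the bound $\max_i|\Omega_i|\preceq N^d$, and condition (A3)(b) follows from exactly the same computation as in Lemma \ref{a3s}(a): the ``boundary'' set $W$ has $|W|\asymp n^d(\log N)^{-\kappa}$ (using $N=o(n^{1/2})$, which holds for $\Delta$ large), and the exponential Markov/union-bound argument leading to \eqref{sdw}, together with (V), gives $\pp(\sum_i\pi_n(A_i)\geq 1-\alpha(\log t)^{-\kappa})\geq 1-(\log t)^{-\delta}$ for any $\delta$, so $\delta_2=\kappa$.

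For (A3)(c) the key difference is the energy estimate. I would again take $\varphi^{(i)}(x)=\varphi_N(x-x_i)$; the crude bound $\E(\kE(\varphi^{(i)})^2)\preceq N^{2d}$ and the covariance estimate from Lemma \ref{lem:cov} are unchanged, yielding $\Var(\sum_i\kE(\varphi^{(i)}))\preceq kN^{4d}$ as in \eqref{varb}. Now, however, Proposition \ref{rvph}(b) gives
\[
\E\Big(\sum_{i=1}^k\kE(\varphi^{(i)})\Big)\asymp kN^{d-2}\beta_N\asymp \Big(\frac{n}{N}\Big)^dN^{d-2}(\log N)^{\kappa}=n^dN^{-2}(\log N)^{\kappa}\asymp n^dt^{-1}(\log t)^{-\kappa},
\]
where the last step uses $N^{-2}(\log N)^{\kappa}\asymp t^{-1}(\log t)^{-\kappa}$ from $N=t^{1/2}(\log t)^{\kappa}$. (Here one checks, as in Proposition \ref{rvph}(b), that the $S_4$-type terms dominate, which is exactly why we need the cut-off scale $M=N/(\log N)^{\kappa}$ and hence $\kappa\geq 1$ so that $\delta_2=\delta_3=\kappa$ is consistent with $\delta_1=d\kappa$ through $\delta_1+2(\delta_2\wedge\delta_3)$ being of a fixed form.) Chebyshev's inequality then gives $\pp(\sum_i\kE(\varphi^{(i)})\geq 2\E\sum_i\kE(\varphi^{(i)}))\preceq kN^{4d}/(kN^{d-2}\beta_N)^2=N^{2d+4}\beta_N^{-2}/k\preceq k^{-1/2}$ provided $N^{4d+8}\preceq k$, i.e.\ $N^{1+(4d+8)/d}\preceq n$, which holds for $\Delta$ large. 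Combined with $\mathrm{cap}_{\Omega_i}(A_i)\leq 2\kE_{G_n}(\varphi^{(i)})\leq 2\kE(\varphi^{(i)})$ and the lower bound $|E_n|\geq cn^d$ from (V), this yields (A3)(c) with $\delta_3=\kappa$, exactly along the lines of \eqref{boca}. Part (b) is then obtained, just as in Lemma \ref{a3s}(b), by replacing every occurrence of $(\log t)^{\kappa}$ and $(\log N)^{\kappa}$ by the constant $\lambda_0$; since the resulting $\alpha$ does not depend on $\lambda_0$, enlarging $\lambda_0$ ensures $1-C_\alpha\lambda_0^{-1}>0$.

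I do not expect any genuinely new obstacle here, since the proof is structurally identical to Lemma \ref{a3s}; the one point requiring care is bookkeeping the exponents so that the $\delta_i$ line up with the claimed values ($\delta_1/d=\delta_2=\delta_3=\kappa$), which forces the specific choices $N=t^{1/2}(\log t)^{\kappa}$ and $\beta_N=(\log N)^{\kappa}$ and explains the restriction $\kappa\geq 1$ (needed so that the dominant $S_4$-term analysis of Proposition \ref{rvph}(b) applies with $\beta_N$ of this size, and so that the final heat-kernel exponent $\delta_1+2(\delta_2\wedge\delta_3)=d\kappa+2\kappa$ matches the stated form). A secondary point is verifying that $N=o(n^{1/2})$ and $N^{1+(4d+8)/d}\preceq n$ simultaneously hold for $n\geq n_0=t^\Delta$ with $\Delta$ chosen large enough depending only on $d$ and $s$; this is routine. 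Everything else — the Cauchy–Schwarz/Markov estimates, the use of (V), and the passage from $\kE_{G_n}$ to $\kE$ — carries over verbatim from the stable case.
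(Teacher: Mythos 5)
Your argument is correct and mirrors the paper's proof exactly: the same choice $N=t^{1/2}(\log t)^\kappa$ with $M=N/(\log N)^\kappa$ (so $\beta_N=(\log N)^\kappa$), the same box decomposition and boundary-set estimate for (A3)(a)--(b), and the same appeal to the variance bound \eqref{varb} and Chebyshev with the energy estimate now supplied by Proposition \ref{rvph}(b) rather than (a), giving $\E\sum_i\kE(\varphi^{(i)})\asymp kN^{d-2}\beta_N\asymp n^dt^{-1}(\log t)^{-\kappa}$ and the condition $N^{4d+8}\preceq k$ absorbed by taking $\Delta$ large. The only small inaccuracy is your parenthetical claim that $\kappa\geq 1$ is forced by the $S_4$-dominance in Proposition \ref{rvph}(b) or by matching $\delta_1=d\kappa$ with $\delta_2=\delta_3=\kappa$: Proposition \ref{rvph}(b) only needs $\beta_N\in[2,\infty)$ with $\beta_N=o(N)$, which holds for large $N$ whenever $\kappa>0$, so the restriction $\kappa\geq 1$ in the statement is not structurally required by your argument and does not affect its validity.
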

\begin{proof}
\noindent
(a) The argument is again similar to Lemma \ref{a3s}(a), but now we take
\[N:=t^{\tfrac{1}{2}} (\log t)^{\kappa}.\]
In this case, we then get that
\[\max_{i=1,\dots,k}|\Omega_i|\preceq N^d =t^{\tfrac{d}{2}} (\log t)^{\kappa d},\]
and so (A3)(a) holds with $\delta_1=\kappa d$. Proceeding as in the previous proof with $\beta_N:= (\log N)^{\kappa}$, we further have
\[\pp\left(\pi_n(W \cap V_n) \geq  C(\log N)^{-\kappa}\right) \leq (\log t)^{-\delta}\]
for any $\delta>0$ (once $t$ is large), which verifies (A3)(b) with $\delta_2=\kappa$.  By using similar arguments as for \eqref{boca}, noting that the variance bound \eqref{varb} holds for all $d,s$, and now with the help of Proposition \ref{rvph}(b), we can also prove that
\begin{eqnarray*}
\pp\left(\sum_{i=1}^k\mathrm{cap}_{\Omega_i}(A_i)> 2\alpha |E_n|t^{-1}(\log t)^{-\kappa}\right) \leq 2k^{-1/2},
\end{eqnarray*}
for $n\geq n_0(t)=t^{\Delta}$, for $\alpha, \Delta$ chosen suitably large.

\noindent
(b) Making appropriate adaptations to the proof of (a), the proof is similar to that of Lemma \ref{a3s}(b).
\end{proof}

Finally, in the critical case, we have the following. Note that in this case we do not provide a separate bound with $\lambda(t)$ constant as we do in the previous two lemmas, since the additional log term in Proposition \ref{rvph}(c) means that we are unable to avoid incorporating a log in the estimates somewhere.

\begin{lem}\label{a3c}
Fix $d\geq 2$ and $s=d+2$. For $\kappa>0$, LRP($d,s$) satisfies (A3) with $\delta_1/d-1/2=\delta_2=\delta_3=\kappa$, $\delta_0$ arbitrarily large, and $\lambda(t)=\max\{1,\log(t)\}$.
\end{lem}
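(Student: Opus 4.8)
The plan is to follow the template established in Lemma \ref{a3s}(a) and Lemma \ref{a3g}(a), making the modifications dictated by the extra logarithmic factor appearing in Proposition \ref{rvph}(c). As before, I would set $n_0(t)=t^\Delta$ for $\Delta$ sufficiently large, and choose the box side-length $N$ so that the expected energy $\E(\kE(\varphi^{(i)}))$ summed over the $k\asymp (n/N)^d$ boxes is of order $n^d t^{-1}\lambda(t)^{-\delta_3}$ with $\lambda(t)=\max\{1,\log t\}$ and $\delta_3=\kappa$. Using Proposition \ref{rvph}(c) with $\beta_N=(\log N)^\kappa$, one has $\E(\sum_i\kE(\varphi^{(i)}))\asymp k N^{d-2}\beta_N\log(N/\beta_N)\asymp n^d N^{-2}(\log N)^\kappa \log N$, so to make this $\asymp n^d t^{-1}(\log t)^{-\kappa}$ one is led to take
\[
N:=t^{1/2}(\log t)^{(2\kappa+1)/2},
\]
roughly speaking (the precise power of the log being forced by balancing $N^{-2}(\log N)^{2\kappa+1}\asymp t^{-1}(\log t)^{-\kappa}$, equivalently $N^2\asymp t(\log t)^{2\kappa+1}$). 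This then yields $\max_i|\Omega_i|\preceq N^d=t^{d/2}(\log t)^{d(2\kappa+1)/2}$, so (A3)(a) holds with $\delta_1=d(2\kappa+1)/2$, i.e.\ $\delta_1/d-1/2=\kappa$, matching the statement.

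For (A3)(b), I would repeat verbatim the argument of Lemma \ref{a3s}(a): define the same ``boundary'' set $W=\{x:n-N\le |x|\le n\}\cup(\cup_i W_i)$ with $W_i=\{x:N-M\le|x-x_i|\le N\}$ and $M=N/\beta_N=N/(\log N)^\kappa$, estimate $|W|\asymp n^d(\log N)^{-\kappa}$ using $N=o(n^{1/2})$, and apply the exponential moment bound for $\sum_{x\in W}\deg(x)$ together with (V) to get $\pp(\pi_n(W\cap V_n)\ge C(\log N)^{-\kappa})\le (\log t)^{-\delta}$ for any $\delta$. This gives (A3)(b) with $\delta_2=\kappa$. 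For (A3)(c), I would again use $\varphi^{(i)}(x)=\varphi_N(x-x_i)$, note that the variance bound \eqref{varb}, $\Var(\sum_i\kE(\varphi^{(i)}))\preceq kN^{4d}$, holds for all $d,s$ (it was derived without reference to the regime), combine it with the lower bound on the expectation from Proposition \ref{rvph}(c), and apply Chebyshev to get $\pp(\sum_i\kE(\varphi^{(i)})\ge 2\sum_i\E[\kE(\varphi^{(i)})])\preceq N^{2s}/k\preceq k^{-1/2}$, valid once $N^{4s}\preceq k$, i.e.\ $N^{1+4s/d}\preceq n$, which again holds for $\Delta$ large. Together with (V) (to replace $|E_n|$ by $cn^d$), this gives (A3)(c) with $\delta_3=\kappa$, and $\delta_0$ can be taken arbitrarily large throughout since all the failure probabilities decay faster than any power of $\log t$ except the $k^{-1/2}\preceq n^{-c}$-type terms, which are even smaller.

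The genuinely new bookkeeping — and the only place one must be careful — is tracking the extra $\log(N/\beta_N)$ from Proposition \ref{rvph}(c) through the choice of $N$: it forces the larger exponent $(2\kappa+1)/2$ (rather than $\kappa$) on the log in $N$, hence the shifted relation $\delta_1/d-1/2=\kappa$ in place of $\delta_1/d=\kappa$. I do not expect any serious obstacle: once $N$ is chosen correctly, every estimate is a cosmetic variant of one already carried out in Lemmas \ref{a3s} and \ref{a3g}. The one point worth double-checking is that with this larger $N$ the side conditions $N=o(n^{1/2})$ and $N^{1+4s/d}\preceq n$ are still comfortably satisfied for $n\ge n_0(t)=t^\Delta$ with $\Delta$ large, which they are since $N$ is still $t^{1/2+o(1)}$. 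I would therefore present the proof tersely, indicating the choice of $N$, pointing to the parts of the proofs of Lemmas \ref{a3s}(a) and \ref{a3g}(a) that carry over unchanged, and only spelling out the computation that fixes the power of $\log t$ in $N$ and hence the value of $\delta_1$.
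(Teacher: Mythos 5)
Your proposal is correct and follows essentially the same route as the paper's own (very terse) proof: the paper also simply adapts the argument of Lemma \ref{a3s}, taking $N = t^{1/2}(\log t)^{\kappa+1/2}$ (which is exactly your $t^{1/2}(\log t)^{(2\kappa+1)/2}$) and invoking Proposition \ref{rvph}(c) in place of \ref{rvph}(b). The only blemish is a typo in your parenthetical balancing step, where you write $N^{-2}(\log N)^{2\kappa+1}$ when the expression you had just derived was $N^{-2}(\log N)^{\kappa+1}$; after using $\log N\asymp\log t$ the equivalent form $N^2\asymp t(\log t)^{2\kappa+1}$ and the resulting $\delta_1/d-1/2=\kappa$ are nonetheless correct.
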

\begin{proof} The argument is again similar to Lemma \ref{a3s}, but now we take
\[N:=t^{\tfrac{1}{2}} (\log t)^{\kappa+\frac{1}{2}},\]
and have to take into account the additional $\log$ term that appears in the critical case, i.e.\ apply Proposition \ref{rvph}(c), rather than Proposition \ref{rvph}(b).
\end{proof}

\section{Proof of Theorems \ref{lrpq} and \ref{lrpa}}\label{sec4}

\subsection{Proof of lower bounds}

Given the preparations of the previous section concerning (A1)--(A3), the main outstanding issue when it comes to the application of Theorem \ref{abth} is to check the assumption of Benjamini-Schramm convergence for the LRP($d$,$s$) model. Defining $G_n=(V_n,E_n)$, $n\geq 1$, as at the start of Section \ref{sec3}, we make precise the desired condition as follows.
\begin{itemize}
\item[(BS)] Let $\rho$ be the origin of $\Z^d$ and $\rho_n$ be a uniformly chosen vertex in $G_n$. Then the random graphs $(G_n,\rho_n)$ Benjamini-Schramm converge to $(G,\rho)$, conditioned that $\rho \in G$.
  \end{itemize}
In Section \ref{ss:ll}, we will explain how to verify (BS) in the non-nearest-neighbour setting when $s$ lies in the restricted range $(d,2d)$. Unfortunately, for the full range of parameters, $s>d$, we are only able to verify (BS) in the nearest-neighbour case, i.e.\ taking $q=1$ in the more general model of Section \ref{sec3}. (See Remark \ref{rem:lq} for further discussion of how our heat kernel estimates apply in general when both (V) and (BS) hold.)

\begin{lem}\label{bsclem} The LRP($d$,$s$) model with $q=1$ satisfies (BS).
\end{lem}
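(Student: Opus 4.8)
The plan is to construct, on a common probability space, the infinite cluster $G$ of the LRP($d$,$s$) model with $q=1$ (rooted at the origin $\rho$, which is deterministically in $G$ here since all nearest-neighbour edges are present, so $G$ is connected and spans all of $\Z^d$) together with the finite graphs $G_n = (V_n, E_n)$, and to show convergence of rooted balls directly. Since $q=1$, note $V_n = [-n,n]^d$ and $G_n$ is simply the restriction of $G$ to this box (it is automatically connected via the nearest-neighbour edges, so there is no ``largest component'' subtlety to worry about). The key structural fact is that for any fixed radius $k \in \mathbb{N}$, the ball $B_{G}(x,k)$ of radius $k$ in the graph metric around a vertex $x$ is a local object: by the summability $\sum_{y} |x-y|^{-s} < \infty$ for $s > d$, each vertex has finite expected degree, and more importantly $B_G(x,k)$ is $\pp$-a.s.\ finite and is a measurable function of the edge states incident to some a.s.-finite (random) region around $x$.

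The main step is then the following: first, fix $k$ and a finite rooted graph $(H, o)$; I would compute the probability that $B_{G_n}(\rho_n, k) \cong (H,o)$ by conditioning on the location of $\rho_n$, writing
\[
\pp\big(B_{G_n}(\rho_n,k)\cong (H,o)\big) = \frac{1}{|V_n|}\sum_{x \in [-n,n]^d} \pp\big(B_{G_n}(x,k)\cong (H,o)\big).
\]
For $x$ in the ``bulk'', i.e.\ at distance at least $R$ from the boundary of $[-n,n]^d$ for a large but fixed $R$, I would argue that $B_{G_n}(x,k) = B_G(x,k)$ with probability at least $1 - \varepsilon_R$, where $\varepsilon_R \to 0$ as $R \to \infty$ uniformly in $n$ and $x$: the only way the two balls can differ is if the $k$-ball in $G$ around $x$ reaches outside the box, which forces an unlikely chain of at most $k$ long edges escaping distance $R$, and this probability is controlled using the tail of $q_{x,y}$ together with $s>d$ (a union bound over the $\le (\text{const})^k$-many possible short escape routes, each contributing a product of edge probabilities that is summable). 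Then by translation invariance of the law of $G$, $\pp(B_G(x,k)\cong (H,o))$ does not depend on $x$, and equals $\pp(B_G(\rho,k)\cong(H,o))$. Bulk vertices make up a $(1 - O(R/n))$ fraction of $[-n,n]^d$, so letting $n\to\infty$ and then $R\to\infty$ gives
\[
\lim_{n\to\infty}\pp\big(B_{G_n}(\rho_n,k)\cong (H,o)\big) = \pp\big(B_G(\rho,k)\cong (H,o)\big),
\]
which is exactly Benjamini--Schramm convergence to $(G,\rho)$. (In the $q=1$ case the conditioning ``$\rho\in G$'' is vacuous, so the statement of (BS) matches.)

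I expect the main obstacle to be the uniform-in-$n$, uniform-in-$x$ control of the boundary-effect probability $\pp(B_{G_n}(x,k)\neq B_G(x,k))$ for bulk $x$ --- that is, making rigorous that a graph-distance-$k$ ball cannot ``leak'' far in Euclidean distance except with small probability. This requires a quantitative estimate: the event that $B_G(x,k)$ contains a vertex at Euclidean distance $> R$ is contained in the event that there is a self-avoiding path $x = z_0 \sim z_1 \sim \cdots \sim z_m$ with $m \le k$ and $|z_0 - z_m| > R$; summing the probability $\prod_{i}q_{z_{i-1},z_i} \le \prod_i C|z_{i-1}-z_i|^{-s}$ over all such paths, and using that for $s>d$ the convolution-type sums $\sum_{z}|z|^{-s}$ and their iterates converge, yields a bound of the form $C_k R^{-(s-d)}$ (or better), which is summable/vanishing as $R\to\infty$. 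Once this geometric input is in place, the rest is a routine averaging argument. A minor additional point to check is that $|V_n| = (2n+1)^d$ exactly (no random denominator issue arises when $q=1$), so dividing by $|V_n|$ is harmless; and one should note the family of isomorphism classes of finite rooted graphs that can appear as $B_G(\rho,k)$ is at most countable, so convergence of these finitely/countably many probabilities is precisely the required notion of local weak convergence.
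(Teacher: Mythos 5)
Your proposal is correct and follows essentially the same skeleton as the paper's proof: average over the uniformly chosen root, use that $V_n=[-n,n]^d$ when $q=1$, exploit translation invariance of the law of $G$, and control boundary effects by noting that a graph-distance-$r$ ball is unlikely to escape a large Euclidean box. The one place you diverge is in how you control $\pp\bigl(B_G(x,r)\not\subseteq B_\infty(x,R)\bigr)$: you propose a quantitative path-counting bound of the form $C_k R^{-(s-d)}$, whereas the paper's proof handles this step softly — by translation invariance the probability equals $\pp(B_G(\rho,r)\not\subseteq B_\infty(\rho,R))$, and since $G$ is $\pp$-a.s.\ locally finite for $s>d$, the ball $B_G(\rho,r)$ is a.s.\ a finite set, so this probability tends to $0$ as $R\to\infty$ by dominated convergence, with no explicit tail estimate required. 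Your quantitative route would work (and does give a rate), but it is unnecessary effort here; the soft argument is cleaner and sidesteps the need to carefully sum over self-avoiding paths and justify the iterated-convolution tail bound.
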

\begin{proof}
We write $B_G$ for balls in $G$ with respect to the graph distance, $B_{G_n}$ for balls in $G_n$ with respect to the graph distance, and $B_\infty$ for $\ell_\infty$-balls in $\mathbb{Z}^d$. We need to prove that for any rooted, finite graph $H$ and finite $r$,
\begin{equation} \label{BSc}
\lim_{n\rightarrow \infty}    \pp(B_{G_n}(\rho_n,r) =H) = \pp(B_G(\rho,r)=H).
\end{equation}
We have that
\begin{eqnarray*}
\mathbf{P}\left(B_{G_n}(\rho_n,r)=H\right)&\leq&\mathbf{P}\left(B_{G_n}(\rho_n,r)=H,\:B_{G}(\rho_n,r)\subseteq B_\infty(\rho_n,N)\subseteq B_\infty(\rho,n)\right)\\
&&+\mathbf{P}\left(B_{G}(\rho_n,r)\not\subseteq B_\infty(\rho_n,N)\right)+\mathbf{P}\left(B_\infty(\rho_n,N)\not\subseteq B_\infty(\rho,n)\right).
\end{eqnarray*}
Now, since $\rho_n$ is uniformly chosen on $V_n$,
\[\mathbf{P}\left(B_\infty(\rho_n,N)\not\subseteq B_\infty(\rho,n)\right)\leq \frac{cn^{d-1}N}{n^d}=\frac{cN}{n}\rightarrow 0,\]
as $n\rightarrow\infty$ (for each fixed $N$). Moreover, by the translation invariance of the model,
\[\mathbf{P}\left(B_{G}(\rho_n,r)\not\subseteq B_\infty(\rho_n,N)\right)= \mathbf{P}\left(B_{G}(\rho,r)\not\subseteq B_\infty(\rho,N)\right).\]
Since $G$ is $\mathbf{P}$-a.s.\ locally-finite when $s>d$, $B_G(\rho,r)$ is $\mathbf{P}$-a.s.\ a finite set, and so the above probability converges to 0 as $N\rightarrow\infty$. Finally, on the event $B_{G}(\rho_n,r)\subseteq B_\infty(\rho_n,N)\subseteq B_\infty(\rho,n)$, it holds that $B_{G_n}(\rho_n,r)=B_{G}(\rho_n,r)$, and so
\begin{eqnarray*}
\lefteqn{\mathbf{P}\left(B_{G_n}(\rho_n,r)=H,\:B_{G}(\rho_n,r)\subseteq B_\infty(\rho_n,N)\subseteq B_\infty(\rho,n)\right)}\\
&\leq& \mathbf{P}\left(B_G(\rho_n,r)=H\right)=\mathbf{P}\left(B_G(\rho,r)=H\right).
\end{eqnarray*}
In particular, it follows from what we have so far established that
\[\limsup_{n\rightarrow\infty}\mathbf{P}\left(B_{G_n}(\rho_n,r)=H\right)\leq \mathbf{P}\left(B_G(\rho,r)=H\right).\]
By noting that
\begin{eqnarray*}
\lefteqn{\mathbf{P}\left(B_{G_n}(\rho_n,r)=H\right)}\\
&\geq&\mathbf{P}\left(B_{G_n}(\rho_n,r)=H,\:B_{G}(\rho_n,r)\subseteq B_\infty(\rho_n,N)\subseteq B_\infty(\rho,n)\right)\\
&\geq & \mathbf{P}\left(B_G(\rho,r)=H\right)-\mathbf{P}\left(B_{G}(\rho,r)\not\subseteq B_\infty(\rho,N)\right)-\mathbf{P}\left(B_\infty(\rho_n,N)\not\subseteq B_\infty(\rho,n)\right),
\end{eqnarray*}
and applying the results of the preceding discussion, one may similarly conclude that
\[\liminf_{n\rightarrow\infty}\mathbf{P}\left(B_{G_n}(\rho_n,r)=H\right)\geq \mathbf{P}\left(B_G(\rho,r)=H\right),\]
which is enough to complete the proof of \eqref{BSc}.
\end{proof}

\begin{proof}[Proof of lower bounds of Theorem \ref{lrpq}] Combining Theorem \ref{lrps}(a) and Lemma \ref{biskuplem} gives the lower bound of \eqref{qs} in the case $s\in (d,d+1)$ with $\delta_1=0$. Similarly, combining Theorem \ref{lrps}(b) and Lemma \ref{biskuplem} gives the lower bounds of \eqref{qg} in the case $s>2d$ with $\delta_3=0$ and \eqref{qg2}.
In the remaining cases, we have from Lemmas \ref{la1}, \ref{la2}, \ref{a3s}(a), \ref{a3g}(a), \ref{a3c} that (A1), (A2) and (A3) hold for the $\delta_0,\delta_1,\delta_2,\delta_3$ and $\lambda(t)$ given by the latter three results. Taking $\kappa>2$ in Lemmas \ref{a3s}, \ref{a3g}, \ref{a3c} enables us to apply Corollary \ref{abthcor}(a) (and Lemma \ref{bsclem}) to derive quenched lower heat kernel bounds in each case, with the $\delta_i$s of \eqref{qs} and \eqref{qg} as in Remark \ref{deltaq}.
\end{proof}

\begin{proof}[Proof of lower bounds of Theorem \ref{lrpa}]
Again we appeal to Lemmas \ref{la1}, \ref{la2} to confirm that (A1) and (A2) hold in all three cases. In conjunction with Lemmas  \ref{a3s}(b) and \ref{a3g}(b), we have that (A3) holds in the sense required by Corollary \ref{abthcor}(b) in the stable and Gaussian cases. Putting this together with Lemma \ref{bsclem}, we thus obtain the lower bounds of \eqref{as} and \eqref{ag}. The lower bound of \eqref{ac} readily follows by applying Lemmas \ref{la1}, \ref{la2} and \ref{a3c}, with $\kappa>0$ chosen arbitrarily small, in conjunction with Theorem \ref{abth} (and Lemma \ref{bsclem}).
\end{proof}

\begin{rem} \label{rem:lq}
The lower bounds of Theorems \ref{lrpq} and \ref{lrpa} hold for the more general LRP($d$,$s$) model of Section \ref{sec3} (i.e.\ with $q\in[0,1]$) whenever the conditions (V) and (BS) are satisfied. Indeed, we proved in Section \ref{sec3} that under (V) the assumptions (A1)--(A3) are valid and hence, by arguments in the proof of Theorem \ref{abth},
\begin{equation*}
    \pp \left(p_{2t}^{G_n}(\rho_n, \rho_n) < t^{-\gamma}\lambda(t)^{-\delta} \right)  \preceq \lambda(t)^{-{\delta'}}
\end{equation*}
with $\gamma=\tfrac{d}{s-d}$ and $\lambda(t)=\log (t)$ and suitable $\delta,\delta'\geq 0$. Furthermore, the condition (BS)  assures the Benjamini-Schramm convergence of the  random graphs $G_n$, which in particular implies that
\begin{equation*}
    \left|\pp \left(p_{2t}^{G_n}(\rho_n, \rho_n) < t^{-\gamma}\lambda(t)^{-\delta_4} \right)  - \pp \left(p_{2t}^G(\rho,\rho) < 2t^{-\gamma}\lambda(t)^{-\delta_4}  \mid \rho \in G\right) \right|   \leq  \lambda(t)^{-{\delta_0}},
\end{equation*}
for all $n$ sufficiently large. The lower bounds of the heat kernel follow from the above two estimates; see Section \ref{ss:ll}, and Corollary \ref{corcorcor} in particular, for our application of this argument to the non-nearest-neighbour long-range percolation model of Section \ref{sec3} with $s\in (d,2d)$.
\end{rem}

\subsection{Proof of upper bounds}

\begin{proof}[Proof of upper bounds of Theorem \ref{lrpq}]
As noted in Remark \ref{deltaq}, the upper bound of \eqref{qs} follows from \cite[Theorem 1]{CS} (which did not require the assumption of nearest-neighbour edges being present), using the argument in the proof of \cite[Theorem 5.14]{BarHK} to transfer to discrete time. As for \eqref{qg2}, this is an immediate consequence of the general bound of \cite[Theorem 2.1]{BCG}, which implies that there exists a universal constant $C$ such that $p^G_{2t}(x,x)\leq Ct^{-1/2}$ for the simple random walk on any infinite connected graph $G$.

It remains to establish the upper bound of \eqref{qg}. To this end, we will first consider the continuous-time Markov process $(Z_t)_{t\geq 0}$, which has jump chain given by $X$, but the jump rate at site $x$ is equal to $\deg_G(x)$ (i.e.\ the holding time is exponential with this parameter).
The idea of the following proof comes from the unpublished version of \cite{BCKW}.
Note that the measure $m$ on $V$ placing mass $1$ on each vertex is invariant for $(Z_t)_{t\geq 0}$. We let
\[A_t:=\int_0^t\deg_G(Z_s)ds,\]
and define $(Y_t)_{t\geq 0}$ by setting $Y_t=Z_{A_t^{-1}}$, where $A_t^{-1}$ is the right continuous inverse of the non-decreasing additive functional $(A_t)_{t\geq 0}$; the process $(Y_t)_{t\geq 0}$ has the same jump chain as $Z$ (and $X$), but mean one exponential holding times. We claim that there exists a deterministic constant $c_1$ such that, for any realisation of $G$,
\begin{equation}\label{eq:noohr}
P^G_x(Z_t=x)\le c_1t^{-d/2}, \qquad \forall x\in \mathbb{Z}^d.
\end{equation}
Indeed, for the (constant speed) simple symmetric random walk on $\Z^d$, the Nash inequality
\[\Vert f\Vert_{L^2(m)}^{2+4/d}\le c_2\,{\mathcal E}^{\text{SRW}}(f,f)\,\Vert f\Vert_{L^1(m)}^{4/d}\]
holds for some constant $c_2=c_2(d)$, see for instance \cite[Lemma 3.13]{BarHK}.
Here, we have written $\|f\|_{L^p(m)}$ for the $L^p$-norm with respect to $m$, and $\mathcal{E}^{\text{SRW}}(f,f):=\frac12\sum_{x,y\in\mathbb{Z}^d:\:|x-y|=1}(f(x)-f(y))^2$. Since nearest-neighbour edges are present in $G$, it holds that ${\mathcal E}^{\text{SRW}}(f,f)\le{\mathcal E}(f,f)$, where $\mathcal{E}$ was defined at \eqref{edef}, and so the same inequality holds with ${\mathcal E}^{\text{SRW}}$ replaced by ${\mathcal E}$. By \cite[Theorem (2.1)]{CarKusStr}, we thus obtain \eqref{eq:noohr}.

We next estimate $P^G_x(Y_{2t}=x)$ by controlling the time change. Using the monotonicity of  $s\mapsto P^G_x(Y_{2s}=x)$, we get
\begin{eqnarray}
P^G_x(Y_{2t}=x)&\le &  \frac 1tE^G_{x}\left(\,\int_t^{2t}\mathbb{I}(Y_s=x) ds\right)\nonumber\\
&=&\frac 1tE^G_{x}\left(\,\int_{A_t^{-1}}^{A_{2t}^{-1}}\mathbb{I}(Z_u=x)A_u'du\right)\nonumber\\
&\le&\frac {\deg_G(x)}t
\int_0^{2t} P^G_x\left(Z_u=x, t\le A_u\le 2t\right)du,\label{eq:qwehsr}
\end{eqnarray}
where we first changed variables using $s:=A_u$,  and then used that the derivative $A_u'$ satisfies $A_u'=\deg_G(Z_u)=\deg_G(x)$ on the event $Z_u=x$. In the last inequality, we also used the fact that $A_{2t}\ge 2t$, which holds because $\deg_G(x)\ge 2d\ge 1$ (since all nearest-neighbour edges are present).

Now, let $D_t:=\sup_{0\le s\le t}|Y_s|$. By \cite[Lemma 4.1]{CS}, there exist $c_3,c_4,c_5>0$ such that, for any $T,\lambda>0$
and any $p>(s-d)^{-1}$, $r<s-d$,
\begin{equation}\label{dtbound}
{\mathbf P}\left(\{ P^G_\rho\left(D_t\ge c_3t^{p+1}\right)>c_4t^{-\lambda}\}\right)\le c_5t^{\lambda+1-pr}.
\end{equation}
Hence, taking $p$ large enough so that $\sum_{t=1}^\infty t^{d/2+1-pr}<\infty$, by applying the Borel-Cantelli lemma, one can deduce that, $\mathbf{P}$-a.s., for all large $t\in\mathbb{N}$,
\begin{equation}\label{etbound}
P^G_\rho(E_t^c)\le c_{4} t^{-d/2},
\end{equation}
where $E_t:=\{D_t<c_3t^{p+1}\}$. Since
\[D_{2t}=\sup_{0\leq s\leq 2t}|Y_s|=\sup_{0\leq s\leq 2t}|Z_{A^{-1}_s}|=\sup_{0\leq s\leq A^{-1}_{2t}}|Z_s|,\]
it further holds that $E_{2t}\cap\{A_u\leq {2t}\}\subseteq F_{u,t}$, where $F_{u,t}:=\{\sup_{0\leq s\leq u}|Z_s|\leq c_3(2t)^{p+1}\}$. In particular, applying \eqref{etbound}, we deduce from \eqref{eq:qwehsr} that, $\mathbf{P}$-a.s., for all large $t\in\mathbb{N}$,
\begin{equation}\label{inter1}
\frac{P^G_x(Y_{2t}=x)}{\deg_G(x)}\le 2c_4(2t)^{-d/2}+\frac {1}{t}
\int_0^{2t} P^G_x\left(Z_u=x, t\le A_u\le 2t, F_{u,t}\right)du.
\end{equation}
Using the Markov property, we moreover have
\begin{eqnarray*}
\lefteqn{P^G_x(Z_u=x, t\le A_u\le 2t, F_{u,t})}\\
&\le&\sum_{y\in\mathbb{Z}^d}P^G_x(Z_{u/2}=y, Z_u=x, t/2\le A_{u/2}\le 2t, F_{u,t})\\
&&~~~~~~~+\sum_{y\in\mathbb{Z}^d}P^G_x(Z_{u/2}=y, Z_u=x, t/2\le A_u-A_{u/2}\le 2t, F_{u,t})\\
&\leq&
\sum_{y\in\mathbb{Z}^d} E^G_x\left(\mathbb{I}(Z_{u/2}=y,t/2\le A_{u/2}\le 2t, F_{u/2,t})P^G_y\left(Z_{u/2}=x\right)\right)\\
&&~~~~~~~+\sum_{y\in\mathbb{Z}^d}E^G_x\left(\mathbb{I}(Z_{u/2}=y)P^G_y\left(Z_{u/2}=x, t/2\le A_{u/2}\le 2t, F_{u/2,t}\right)\right).
\end{eqnarray*}
Noting that $P^G_y(Z_{u/2}=x)\le c_6 u^{-d/2}$, which is due to \eqref{eq:noohr} and the Cauchy-Schwarz inequality, and
\[    P^G_y\left(Z_{u/2}=x, t/2\le A_{u/2}\le 2t, F_{u/2,t}\right)=P^G_x\left(Z_{u/2}=y, t/2\le A_{u/2}\le 2t, F_{u/2,t}\right),\]
which is due to the symmetry of $(Z_t)_{t\geq 0}$, we obtain
\begin{eqnarray*}
\lefteqn{P^G_x(Z_u=x, t\le A_u\le 2t, F_{u,t})}\\
&\leq& 2\sum_{y\in\mathbb{Z}^d}P^G_x\left(Z_{u/2}=y\right)P^G_x\left(Z_{u/2}=y, t/2\le A_{u/2}\le 2t, F_{u/2,t}\right)\\
&\leq& c_7u^{-d/2}P^G_x\left(t/2\le A_{u/2}\le 2t, F_{u/2,t}\right).
\end{eqnarray*}
Plugging this into \eqref{inter1}, we have, $\mathbf{P}$-a.s., for all large $t\in\mathbb{N}$,
\begin{equation}\label{eq:noeo2n}
\frac{P^G_x(Y_{2t}=x)}{\deg_G(x)}\le c_8t^{-d/2}+\frac {c_9}{t}
\int_0^{2t}u^{-d/2}P^G_x\left(t/2\le A_{u/2}\le 2t, F_{u/2,t}\right) du.
\end{equation}
Note, on $F_{u/2,t}$, it holds that
\[A_{u/2}=\int_0^{u/2}\mathrm{deg}_G(Z_s)ds\leq \frac{u}{2} \max_{|x|\leq c_3(2t)^{p+1}}\mathrm{deg}_G(x),\]
and so
\begin{eqnarray}
P^G_x\left(t/2\le A_{u/2}\le 2t, F_{u/2,t}\right) &\leq&\left(\frac{u}{t}\right)^{(d-1)/2} E^G_x\left(\left(\frac{A_{u/2}}{u/2}\right)^{(d-1)/2}\mathbb{I}(F_{u/2,t})\right)\label{aaa1}\\
&\leq& \left(\frac{u}{t}\right)^{(d-1)/2}  \max_{|x|\leq c_3(2t)^{p+1}}\mathrm{deg}_G(x)^{(d-1)/2}.\nonumber
\end{eqnarray}
In particular, together with \eqref{eq:noeo2n}, this implies, $\mathbf{P}$-a.s., for all large $t\in\mathbb{N}$,
\[\frac{P^G_x(Y_{2t}=x)}{\deg_G(x)}\leq c_{10} t^{-d/2}\max_{|x|\leq c_3(2t)^{p+1}}\mathrm{deg}_G(x)^{(d-1)/2}.\]
For bounding the max term, we note that
\[{\mathbf P}\left(\max_{|x|\le c_{3}(2t)^{p+1}}\deg_G(x)\ge c_{11}\log t\right)\le c_{3}(2t)^{d(p+1)}e^{-c_{11}\log t}
{\mathbf E}[e^{\deg_G (\rho)}]\le c_{12}t^{d(p+1)-c_{11}},\]
which, by taking $c_{11}>d(p+1)+1$, is summable over $t\in\mathbb{N}$. Consequently, on applying the Borel-Cantelli lemma, we obtain an estimate of the desired form for $(Y_t)_{t\geq 0}$.

To complete the proof, we need to transfer the estimate to discrete time. Note that we can write $Y_t=X_{T_t}$, where $(T_t)_{t\geq 0}$ is a unit rate Poisson process on $[0,\infty)$, independent of $X$. Hence we have that
\begin{eqnarray*}
P^G_x(Y_t=x)&=&P^G_x\left(X_{T_t}=x\right)\\
&=&\sum_{s\geq 0}P^G_x\left(X_{2s}=x\right)P^G_x(T_t=2s)\\
&\geq &\min_{s\in[t/4,t]}P^G_x\left(X_{2s}=x\right)P^G_x(T_t\in[t/2,2t])\\
&\geq &P^G_x(X_{2t}=x)\left(1-P^G_t(|T_t-t|>t/2)\right).
\end{eqnarray*}
By Chebyshev's inequality, it holds that $P^G_t(|T_t-t|>t/2)\leq \frac{4}{t^2}\mathrm{Var}_x^G(T_t)=\frac{4}{t}$. Consequently, for $t\geq 8$, it holds that $P^G_x(X_{2t}=x)\leq 2P^G_x(Y_t=x)$, and so the result follows from the continuous-time estimate.
\end{proof}

\begin{rem}\label{nologrem}
By Jensen's inequality and Fubini's theorem, we have that
\begin{equation}\label{aaa2}
E^G_x\left(\left(\frac{A_{u/2}}{u/2}\right)^{(d-1)/2}\right)\leq \frac{2}{u}\int_0^{u/2} E^G_x\left(\mathrm{deg}_G(Z_s)^{(d-1)/2}\right)ds.
\end{equation}
Hence if we could prove a bound of the form $E^G_x(\mathrm{deg}_G(Z_s)^{(d-1)/2})\leq C_G$ for all $s\geq 0$, where $C_G$ is a random constant depending only on the environment $G$, then we would obtain the quenched upper bound without a logarithm.
\end{rem}

\begin{proof}[Proof of upper bounds of Theorem \ref{lrpa}]
Similarly to \eqref{qs}, the upper bound of \eqref{as} follows from \cite[Theorem 1]{CS}. Indeed, it is proved there that there exist deterministic constants $c,\delta$ such that, $\mathbf{P}$-a.s.,
\[p^G_{2t}(\rho,\rho)\leq c t^{-\frac{d}{s-d}}\left(\log t\right)^\delta\]
holds for $t\geq T$, where $T$ is a random variable that satisfies: for any $\eta>0$, there exists a constant $C$ such that \[\mathbf{P}\left(T> t\right)\leq Ct^{-\eta}.\]
(Again, we highlight that, although the general bound of \cite{CS} is given for the continuous-time random walk, this is readily transferred to discrete time by applying the argument used in the proof of \cite[Theorem 5.14]{BarHK}.) Hence taking $\eta\geq d/(s-d)$ yields
\[\mathbf{E}\left(p^G_{2t}(\rho,\rho)\right)\leq c t^{-\frac{d}{s-d}}\left(\log t\right)^\delta + \mathbf{P}\left(T> t\right)\leq ct^{-\frac{d}{s-d}}\left(\log t\right)^\delta.\]

The proof of \eqref{ac} and \eqref{ag} can be obtained using the estimates in the quenched cases. Indeed,
for $d=1$, one just takes the expectation under $\mathbf E$ of \eqref{qg2}, recalling from the proof of the latter result that the constant in the upper bound is deterministic, and the bound holds for all $t\in\mathbb{N}$. For $d\ge 2$, we return to \eqref{eq:noeo2n}, replacing the first term in the upper bound by the probability that it is bounding:
\[\frac{P^G_x(Y_{2t}=x)}{\deg_G(x)}\le 2P_x^G(E_{2t}^c) +\frac {c_9}{t}
\int_0^{2t}u^{-d/2}P^G_x\left(t/2\le A_{u/2}\le 2t, F_{u/2,t}\right) du.\]
For the expectation of the first term, we have from \eqref{dtbound} with $\lambda=d/2$ and $p$ chosen suitably large that
\[\mathbf{E}\left(P_x^G(E_{2t}^c)\right)\leq c_4t^{-d/2} +c_5t^{d/2+1-pr}\leq c_{13}t^{-d/2}.\]
For the expectation of the second term, we apply \eqref{aaa1} and \eqref{aaa2} to deduce that
\[\mathbf{E}\left(P^G_x\left(t/2\le A_{u/2}\le 2t, F_{u/2,t}\right) \right)\leq  \left(\frac{u}{t}\right)^{(d-1)/s}\frac{2}{u}\int_0^{u/2}\mathbf{E}\left( E^G_x\left(\mathrm{deg}_G(Z_s)^{(d-1)/2}\right)\right)ds.\]
To obtain the desired bound in the continuous-time setting, it follows that it is enough to prove that there exists a constant $C$, independent of $s$, such that
$\mathbf{E}( E^G_x(\mathrm{deg}_G(Z_s)^{(d-1)/2}))\leq C$. To prove this, note that the environment process $(G_{Z_s})_{s\geq 0}$,  where $G_x$ is the graph $G$ translated by $-x$, is invariant and reversible with respect to the measure $m$ (as introduced in the previous proof) under the annealed measure, see, for instance, \cite[Section 4.1]{CS}. Thus, setting $f(G):=\deg_G (\omega(\rho))^{(d-1)/2}$, we have
\begin{eqnarray*}
\mathbf{E}\left( E^G_x\left(\mathrm{deg}_G(Z_s)^{(d-1)/2}\right)\right)&=&\mathbf{E}\left( E^G_x\left(f(G_{Z_s})^{(d-1)/2}\right)\right)\\
&=&\mathbf{E}\left( E^G_x\left(f(G_{Z_0})^{(d-1)/2}\right)\right)\\
&=& \mathbf{E}\left(\mathrm{deg}_G(x)^{(d-1)/2}\right),
\end{eqnarray*}
and we further have that the right hand side is finite when $s>d$. In particular, this establishes that
\[\mathbf{E}\left(\frac{P^G_x(Y_{2t}=x)}{\deg_G(x)}\right)\leq ct^{-d/2}.\]
We can transfer this result to the discrete-time process $(X_t)_{t\geq 0}$ exactly as in the quenched case.
\end{proof}

\section{Open questions}\label{oqsec}

Now we have completed the proofs of our main results, we collate a number of issues left open by the present work. (Some of these are discussed in more detail elsewhere.)
\begin{enumerate}
  \item In the quenched results of \eqref{qs} and \eqref{qg}, and the annealed results of \eqref{as} and \eqref{ac}, it is natural to optimise the log exponents. For the annealed bounds, one would expect a log term in the case $d\geq2$, $s=d+2$ only. One might conjecture that this is also the case for the quenched bounds.
  \item As in \cite{CS,CS2}, one might seek to derive similar heat kernel bounds to ours when the assumption that nearest-neighbour bonds are present is dropped. At least for the lower bounds, we have reduced the problem to checking the conditions (V) and (BS) (recall Remark \ref{rem:lq}), and verified these in the case $s\in (d,2d)$ (see Section \ref{ss:ll} below). Is it also possible to check (V) and (BS) in the case $s\geq 2d$?
  \item Our results support the extension of the Gaussian regime of Theorem \ref{lrps}(b) to $d\geq 1$, $s> \min\{d+2,2d\}$, and the extension of the stable scaling regime of Theorem \ref{lrps}(a) to $d\geq 1$, $s\in (d,\min\{d+2,2d\})$. Can this be proved? Some discussion of the latter case is provided in \cite[Section 3]{CS2}.
  \item In what sense is it possible to determine the walk dimension of $(X_t)_{t\geq0}$, that is, the exponent governing the space-time scaling of this process? A related problem is to establish bounds that satisfactorily describe the off-diagonal decay of the heat kernel, for which the techniques of the current article are insufficient. (As noted in the introduction, for nearest-neighbour bond percolation, quenched and annealed off-diagonal Gaussian heat kernel estimates are established in \cite{MTB}.)
  \item All questions remain open in the case $d=1$, $s=2$. What can be said here?
  \item Throughout this paper, we consider unweighted random graphs partly because Proposition \ref{kpr} (which is \cite[Theorem 3.7]{L}) is stated in this setting. It would be interesting to extend our results to weighted random graphs, including those arising in
      random conductance models. Such an extension would potentially be applicable to the model of \cite{XCKW}. In particular, the latter paper established  heat kernel estimates for long-range random conductance models on integer lattices when the conductance between $x$ and $y$ is given by $w_{x,y}/|x-y|^{d+\alpha}$, where $\{w_{x,y}=w_{y,x}\ge 0: x,y \in \Z^d\}$ are independent and satisfy some moment condition. Is it possible to extend our approach cover this random conductance model when the $\{w_{x,y}\}$ have a translation-invariant distribution?
\end{enumerate}

\section{Appendix}\label{secA}

We finish with a miscellany of results related to the heat kernel estimation techniques and long-range percolation model of this article. Lemma \ref{biskuplem} in particular is required for our lower heat kernel estimates.

\subsection{Heat kernel lower bounds on $\mathbb{Z}^d$}\label{zdsec}

In this section, we explain how to check the assumptions of Section \ref{hkesec} for the graph $\mathbb{Z}^d$. For the statement of the next result, we write $B_n$ for the $\ell_\infty$-ball of radius $n$, centred at 0, and $R$ for the effective resistance on the integer lattice (see \cite[Chapter 2]{BarHK}, for example).

\begin{lem} For $0\leq m<n$,
 \[R\left(B_m,B_n^c\right)\geq \left\{
                           \begin{array}{ll}
                             c_1(n-m), & \hbox{for $d=1$;} \\
                             c_2(\log(n)-\log(m)), & \hbox{for $d=2$;} \\
                             c_d(m^{2-d}-n^{2-d}), & \hbox{for $d\geq3$.}
                           \end{array}
                         \right. .\]
\end{lem}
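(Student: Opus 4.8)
The plan is to bound the effective resistance from below by bounding the effective conductance from above. Writing $\kE(\psi):=\tfrac12\sum_{x,y\in\Z^d:\:|x-y|=1}(\psi(x)-\psi(y))^2$ for the Dirichlet energy of $\psi:\Z^d\to\R$, the Dirichlet (variational) principle gives
\[R\left(B_m,B_n^c\right)^{-1}=\inf\left\{\kE(\psi):\:\psi|_{B_m}=1,\ \psi|_{B_n^c}=0\right\},\]
so it suffices to exhibit, for each $d$, one admissible test function whose energy is $\preceq$ the reciprocal of the claimed bound. Since any $\psi$ that is admissible for the pair $(B_m,B_n^c)$ is also admissible for $(B_{m'},B_n^c)$ whenever $m'\le m$, one has $R(B_{m'},B_n^c)\ge R(B_m,B_n^c)$; in particular I would reduce to the case $m\ge1$, the case $m=0$ then following from $R(B_0,B_n^c)\ge R(B_1,B_n^c)$ and reading the asserted bounds at $m=0$ with $\log m$ replaced by $\log(m\vee1)$ and $m^{2-d}$ by $(m\vee1)^{2-d}$.

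For $1\le m<n$ I would take $\psi$ to be radial in the $\ell_\infty$-norm $|x|_\infty:=\max_i|x_i|$, which is natural because the $B_k$ are $\ell_\infty$-balls and $x\sim y$ forces $\bigl||x|_\infty-|y|_\infty\bigr|\le1$. Put $S:=\sum_{k=m}^{n-1}k^{1-d}$ and define $g$ on $\{0,1,2,\dots\}$ by $g(k)=1$ for $k\le m$, $g(k)=0$ for $k\ge n$, and $g(k)=S^{-1}\sum_{j=k}^{n-1}j^{1-d}$ for $m\le k\le n$; then set $\psi(x):=g(|x|_\infty)$. By construction $\psi$ equals $1$ on $B_m$, vanishes on $B_n^c$, is non-increasing in $|x|_\infty$, and on the annulus at radius $k$ (with $m\le k\le n-1$) has increment $g(k)-g(k+1)=k^{1-d}/S$. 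This profile is chosen because $k\mapsto k^{1-d}$ is, up to constants, the minimiser of $\sum_k k^{d-1}(g(k)-g(k+1))^2$ under the normalisation $g(m)-g(n)=1$, so it should be essentially optimal.

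Finally I would estimate the energy directly. Each edge contributing to $\kE(\psi)$ joins vertices at $\ell_\infty$-radii $k$ and $k\pm1$ for some $m\le k\le n$, there are $\preceq k^{d-1}$ such edges at each radius $k$, and (using $(k-1)^{1-d}\le2^{d-1}k^{1-d}$ to control the increment on the inner side, and noting the increments at radii $m$ and $n$ cause no problem since $g(m-1)=g(m)=1$) each contributes $(\psi(x)-\psi(y))^2\preceq (k^{1-d}/S)^2$. Hence
\[\kE(\psi)\ \preceq\ \frac{1}{S^2}\sum_{k=m}^{n-1}k^{d-1}\cdot k^{2-2d}\ =\ \frac{1}{S^2}\sum_{k=m}^{n-1}k^{1-d}\ =\ \frac1S,\]
and therefore $R(B_m,B_n^c)\ge\kE(\psi)^{-1}\succeq S$. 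It then only remains to evaluate $S=\sum_{k=m}^{n-1}k^{1-d}$: it equals $n-m$ when $d=1$; it is $\asymp\log n-\log m$ when $d=2$ (compare with $\int_m^n t^{-1}\,dt$); and it is $\asymp m^{2-d}-n^{2-d}$ when $d\ge3$ (compare with $\int_m^n t^{1-d}\,dt$), which yields the three asserted bounds. The only delicate points are bookkeeping ones, namely ensuring the per-edge increment bound is uniform at the radii where $g$ switches regime, and checking that the sum-versus-integral comparisons are two-sided; I do not expect any substantive obstacle.
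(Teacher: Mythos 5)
Your proof is correct, but it goes the dual route to the paper's. The paper's own proof is a one-line invocation of the Nash--Williams inequality: the edge-cutsets between consecutive $\ell_\infty$-shells at radii $l$ and $l+1$ (for $m\le l\le n-1$) are disjoint and each has $\asymp l^{d-1}$ edges, so $R(B_m,B_n^c)\ge c\sum_{l=m}^{n-1}l^{1-d}$ at once, and the three regimes follow by integral comparison. You work instead on the potential side of the flow/potential duality: you build the $\ell_\infty$-radial test function whose increments are proportional to $k^{1-d}$ (the near-harmonic profile) and bound its Dirichlet energy by $\preceq 1/S$ with $S=\sum_{k=m}^{n-1}k^{1-d}$, which by the Dirichlet variational principle gives $R(B_m,B_n^c)\ge cS$, the same intermediate bound. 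Nash--Williams is shorter given the named inequality; your argument is self-contained and exhibits the near-minimising potential explicitly, at the cost of the bookkeeping you rightly identify at the radii where the profile changes regime (harmless, since $g(m-1)=g(m)$ and $g(n)=g(n+1)$). Both proofs implicitly read $m\vee 1$ in place of $m$ when $d\ge 2$ and $m=0$, which you flag and the paper leaves unsaid.
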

\begin{proof} By applying the Nash-Williams inequality (see \cite[Proposition 9.15]{LPW}, for example), we have the following:
 \[R\left(B_m,B_n^c\right)\geq \sum_{l=m}^{n-1}\frac{1}{l^{d-1}},\]
 from which the result readily follows.
\end{proof}

Now, tile $B_n$, $n\geq t^{1/2}$, by boxes $(\Omega_i)$ that are each translations of $B_{t^{1/2}}$. Let $A_i$ be the central part of $\Omega_i$, as given by a translation of $B_{(1-\lambda^{-1})t^{1/2}}$. It is then the case that
\[\sum_i\mathrm{cap}_{\Omega_i}(A_i)\preceq\left\{
                           \begin{array}{ll}
                             \frac{n}{t^{1/2}}\times t^{-1/2}\lambda=C_\lambda nt^{-1}, & \hbox{for $d=1$;} \\
                             \left(\frac{n}{t^{1/2}}\right)^2\times \log(1/(1-\lambda^{-1}))^{-1}=C_\lambda n^2t^{-1}, & \hbox{for $d=2$;} \\
                             \left(\frac{n}{t^{1/2}}\right)^d\times\frac{\left(t^{1/2}\right)^{d-2}}{(1-\lambda^{-1})^{2-d}-1} =C_\lambda n^dt^{-1}, & \hbox{for $d\geq3$.}
                           \end{array}
                         \right. .\]
This is enough to check (A3) in this setting. The remaining assumptions are straightforward to check.

\subsection{Quenched lower bound from simple random walk scaling limit}

The following bound is adapted from \cite[Lemma 5.1]{Biskup}, and implies that a scaling limit for a random walk on a random graph of an appropriate form immediately yields a quenched heat kernel lower bound.

\begin{lem}\label{biskuplem} Let $X$ be a simple random walk on a connected, locally-finite graph $G=(V,E)$, started at root vertex $\rho$, and $p_t^G(x,y)$ be its heat kernel (with respect to the measure $\tilde{\pi}(x)=\mathrm{deg}_G(x)$). Let $d_G$ be a metric on $V$, and suppose that: for some constants $d_w,d_f\in(0,\infty)$, the laws of
\begin{equation}\label{pb}
\left(n^{-\frac{1}{d_w}}d_G(\rho,X_{nt})\right)_{t\geq 0},\qquad n\geq 1,
\end{equation}
form a tight sequence in $L^1([0,1])$, and also
\begin{equation}\label{mb}
\sup_{n\geq 1} n^{-{d_f}}\tilde{\pi}\left(\{x:\:d_G(\rho,x)\leq n\}\right)<\infty.
\end{equation}
It is then the case that there exists a constant $c>0$ such that: for all $t\geq 1$,
\[p^G_{2t}(\rho,\rho)\geq ct^{-\frac{d_f}{d_w}}.\]
\end{lem}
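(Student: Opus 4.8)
The argument adapts \cite[Lemma 5.1]{Biskup}. Its cornerstone is the elementary estimate that, for every finite set $A\subseteq V$ and every $m\in\mathbb{N}$,
\begin{equation}\label{bl-key}
p^G_{2m}(\rho,\rho)\geq \frac{P^G_\rho(X_m\in A)^2}{\tilde{\pi}(A)}.
\end{equation}
To see this, note that $P^G_\rho(X_m\in A)=\sum_{y\in A}p^G_m(\rho,y)\tilde{\pi}(y)$, so Cauchy--Schwarz gives $P^G_\rho(X_m\in A)^2\leq \tilde{\pi}(A)\sum_{y\in V}p^G_m(\rho,y)^2\tilde{\pi}(y)$, while reversibility of $X$ and the Chapman--Kolmogorov equation give $\sum_{y\in V}p^G_m(\rho,y)^2\tilde{\pi}(y)=p^G_{2m}(\rho,\rho)$. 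Granted \eqref{bl-key}, together with the fact that $t\mapsto p^G_{2t}(\rho,\rho)$ is non-increasing (as recalled in the proof of Corollary \ref{abthcor}), it suffices to exhibit, for each sufficiently large integer $n$, an integer $m^*=m^*(n)\in\{n,\dots,2n-1\}$ and a radius $R=R(n)\asymp n^{1/d_w}$ with $P^G_\rho(d_G(\rho,X_{m^*})\leq R)\geq \tfrac38$. Indeed, applying \eqref{bl-key} with $A=\{x\in V:d_G(\rho,x)\leq R\}$ and the volume bound \eqref{mb} to bound $\tilde{\pi}(A)\leq CR^{d_f}$, one gets $p^G_{2m^*}(\rho,\rho)\geq cR^{-d_f}$ for some $c>0$; since $m^*\geq n$, monotonicity yields $p^G_{2n}(\rho,\rho)\geq p^G_{2m^*}(\rho,\rho)\geq cR^{-d_f}\asymp n^{-d_f/d_w}$, and the conclusion for an arbitrary real $t\geq1$ then follows by taking $n=\lfloor t\rfloor$ and absorbing the finitely many small values of $n$ (for which $p^G_{2n}(\rho,\rho)>0$) into the constant.

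To construct $m^*$ and $R$, I would exploit the $L^1([0,1])$-tightness in \eqref{pb}. Since compact subsets of $L^1([0,1])$ are norm-bounded, tightness furnishes a constant $M<\infty$ such that, for every $n$,
\[
P^G_\rho\left(\sum_{m=0}^{2n-1}d_G(\rho,X_m)\leq M(2n)^{1+1/d_w}\right)\geq \tfrac34,
\]
using that the $L^1([0,1])$-norm of $s\mapsto(2n)^{-1/d_w}d_G(\rho,X_{\lfloor 2ns\rfloor})$ equals $(2n)^{-1-1/d_w}\sum_{m=0}^{2n-1}d_G(\rho,X_m)$. On the event in the last display, discarding the indices $m<n$ and applying Markov's inequality in the time index shows that at least $n/2$ of the indices $m\in\{n,\dots,2n-1\}$ satisfy $d_G(\rho,X_m)\leq R:=C_1 n^{1/d_w}$, where $C_1=C_1(M,d_w)$ is a suitable constant; taking $P^G_\rho$-expectations of this count and averaging over the $n$ indices then produces a deterministic $m^*\in\{n,\dots,2n-1\}$ with $P^G_\rho(d_G(\rho,X_{m^*})\leq R)\geq\tfrac38$, exactly as needed above.

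The only genuine difficulty I anticipate is this last step. The tightness hypothesis controls only a time-\emph{average} of $d_G(\rho,X_m)$ over $m\in\{0,\dots,2n-1\}$, so one cannot read off a one-time estimate directly; the device of extracting a good deterministic time $m^*$ from the \emph{second} half of the interval, bounding the heat kernel there via \eqref{bl-key}, and then using monotonicity of $t\mapsto p^G_{2t}(\rho,\rho)$ to transfer the bound forward to time $2n$ is precisely what makes the volume exponent $d_f$ and the temporal exponent $d_w$ combine into the stated rate $t^{-d_f/d_w}$. The remaining ingredients --- Cauchy--Schwarz, the identity $\sum_y p^G_m(\rho,y)^2\tilde{\pi}(y)=p^G_{2m}(\rho,\rho)$, and the counting argument --- are routine.
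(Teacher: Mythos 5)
Your proof is correct, and it uses the same core ingredients as the paper's own argument: the Cauchy--Schwarz bound $p^G_{2m}(\rho,\rho)\ge P^G_\rho(X_m\in A)^2/\tilde\pi(A)$ applied to a ball, the volume condition \eqref{mb}, tightness in $L^1([0,1])$ upgraded (via boundedness of compact sets) to a high-probability bound on the time-averaged displacement, a Markov/averaging argument in the time index, and monotonicity of $t\mapsto p^G_{2t}(\rho,\rho)$. The only cosmetic difference is the order of operations --- the paper applies Cauchy--Schwarz at every $s\in\{t,\dots,2t-1\}$ and invokes the monotonicity inequality $p^G_{2t}(\rho,\rho)\ge\bigl(t^{-1}\sum_{s=t}^{2t-1}p^G_{2s}(\rho,\rho)^{1/2}\bigr)^2$ before estimating the resulting expectation, whereas you first extract a single good deterministic time $m^*\in\{n,\dots,2n-1\}$ and then push the bound to time $2n$ by monotonicity --- but this is a rearrangement of the same argument, not a genuinely different route.
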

\begin{proof} We have that
\begin{eqnarray*}
p^G_{2t}(\rho,\rho)&=&\sum_{x\in V}p_t^G(\rho,x)p_t^G(x,\rho)\tilde{\pi}(x)\\
&\geq&\sum_{x\in V:\:d_G(\rho,x)\leq n}p_t^G(\rho,x)^2\tilde{\pi}(x)\\
&\geq& \frac{P^G_\rho\left(d_G(\rho,X_t)\leq C n\right)^2}{\tilde{\pi}\left(\{x\in V:\:d_G(\rho,x)\leq Cn\}\right)}\\
&\geq & c(Cn)^{-{d_f}}P^G_\rho\left(d_G(\rho,X_t)\leq Cn\right)^2
\end{eqnarray*}
where for the first inequality we use the symmetry of the heat kernel for the second we apply the Cauchy-Schwarz inequality, and for the third we appeal to \eqref{mb}. Now, by applying the monotonicity of the on-diagonal part of the heat kernel, it follows that
\begin{eqnarray*}
p^G_{2t}(\rho,\rho)&\geq&\left(t^{-1}\sum_{s=t}^{2t-1}p^G_{2s}(\rho,\rho)^\frac{1}{2}\right)^2\\
&\geq &c(Cn)^{-{d_f}}\left(t^{-1}\sum_{s=t}^{2t-1}P^G_\rho\left(d_G(\rho,X_s)\leq Cn\right)\right)^2\\
&=&c(Cn)^{-{d_f}}\left(E^G_\rho\left(t^{-1}\sum_{s=t}^{2t-1}\mathbb{I}_{\{d_G(\rho,X_s)\leq Cn\}}\right)\right)^2.
\end{eqnarray*}
Setting $n=t^{\frac{1}{d_w}}$, this yields
\[p^G_{2t}(\rho,\rho)\geq cC^{-d_f}t^{-\frac{d_f}{d_w}} \left(E^G_\rho\left(t^{-1}\sum_{s=t}^{2t-1}\mathbb{I}_{\{d_G(\rho,X_s)\leq Ct^{\frac{1}{d_w}}\}}\right)\right)^2.\]
Finally, given \eqref{pb}, the Kolmogorov-Riesz compactness theorem (see \cite[Theorem 5]{KR}, for example) implies that, by taking a suitably large value of $C$,
\[E^G_\rho\left(t^{-1}\sum_{s=t}^{2t-1}\mathbb{I}_{\{d_G(\rho,X_s)\leq Ct^{\frac{1}{d_w}}\}}\right)=1-E^G_\rho\left(t^{-1}\sum_{s=t}^{2t-1}\mathbb{I}_{\{t^{-\frac{1}{d_w}}d_G(\rho,X_s)> C\}}\right)\geq \frac{1}{2},\]
uniformly in $t$. In conjunction with the previous bound, this completes the proof.
\end{proof}

\begin{rem}
In examples, $d_w$ will typically represent the so-called walk dimension of $X$, which is the exponent governing the space-time scaling of the random walk (with respect to the metric $d_G$). This is also sometimes called the escape time exponent. Moreover, $d_f$ will be the volume growth exponent (again, with respect to the metric $d_G$). In the case when $d_G$ is the usual shortest path graph distance (on a suitably regular graph), discussion of the possible values of $d_w$ and $d_f$ appears in \cite{BarEsc}.
\end{rem}

\begin{rem} If in place of condition \eqref{pb} one had that the laws of
\[n^{-\frac{1}{d_w}}d_G(\rho,X_{n}),\qquad n\geq 1,\]
form a tight sequence, then one would be able to deduce the same result by an easier proof. In particular, the integration over time would not be necessary. Whilst this would be enough for us in the Gaussian case, we need the above $L^1$ version to deal with the weaker convergence statement that is known to hold in the stable case.
\end{rem}

\subsection{Quenched invariance principle in one-dimension via resistance scaling}\label{qipd1}

In this section, via the resistance scaling techniques of \cite{Croy} (see also \cite{CHK}), we establish a quenched invariance principle for simple random walk on LRP($d$,$s$) for $d=1$ and $s>2$ (cf.\ Theorem \ref{lrps}(b)), which, in conjunction with Lemma \ref{biskuplem}, gives an on-diagonal lower bound for the heat kernel of the model in question. We assume nearest-neighbour edges are present, i.e.\ $q=1$. Our proof gives an alternative viewpoint to the arguments of \cite{CS2}, which used a martingale approach, and \cite{ZZ}, which applied the corrector method. (Note that it was also the case in both \cite{CS2} and \cite{ZZ} that nearest-neighbour edges were assumed to be present, which ensures percolation occurs; see \cite{Bperc} for discussion of percolation for long-range percolation with $d=1$.) The particular form of our proof is closely related to that used to understand the scaling of the Mott random walk in \cite{CFJ}. Since it is not an original result, we are brief with the details.

\begin{prop} If $d=1$ and $s>2$, then for $\mathbf{P}$-a.e.\ realisation of LRP($d,s$), the law of
\[\left(n^{-\frac{1}{2}}X_{nt}\right)_{t\geq 0}\]
on $C([0,\infty))$ converges weakly to that of $(B_{\sigma^2 t})_{t\geq 0}$, where $(B_{t})_{t\geq 0}$ is standard Brownian motion on $\mathbb{R}^d$, and $\sigma^2\in(0,\infty)$ is a deterministic constant.
\end{prop}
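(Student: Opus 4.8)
The plan is to work with the electrical network in which every edge of $G$ carries unit conductance, and to deduce the invariance principle from the general scaling theory for processes associated with resistance forms developed in \cite{Croy} (see also \cite{CHK}, and \cite{CFJ} for a closely related application). Let $R$ denote the effective resistance metric on $V=\Z$, and let $\mu$ be the degree measure $\mu(\{x\})=\deg_G(x)$. The continuous-time variant of $X$ with unit-mean holding times is precisely the Markov process associated with the resistance form on $(\Z,R)$ and the measure $\mu$, and it shares the diffusive scaling limit of the discrete-time walk $X$, so it is enough to work with the former. Following \cite{Croy}, the strategy is to show that, $\pp$-a.s., the pointed measured metric spaces $(\Z,\,n^{-1/2}R,\,n^{-1/2}\mu,\,0)$ converge as $n\to\infty$, in a Gromov--Hausdorff-vague sense and under the identification $x\leftrightarrow n^{-1/2}x$, to $(\R,\,a\,|\cdot-\cdot|,\,b\,\mathrm{Leb},\,0)$ for deterministic constants $a,b\in(0,\infty)$, together with the (routine, in this recurrent one-dimensional setting) non-explosion and uniformity hypotheses of that framework. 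Since the process associated with the resistance form on $\R$ carrying resistance $a|x-y|$ and measure $b\,\mathrm{Leb}$ is a deterministic time-change of standard Brownian motion, such a statement yields the proposition with $\sigma^2$ a deterministic multiple of $(ab)^{-1}$. The scaling exponents are forced by the fact that, in one dimension, both resistance and volume grow linearly in Euclidean distance, which makes the natural space--time scaling of the walk diffusive.

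First I would dispose of the measure convergence, which is the easy step. As $s>2$, the variable $\deg_G(0)=2+\sum_{y:\,|y|>1}\I(0\sim y)$ has finite mean $b:=\E(\deg_G(0))\in(0,\infty)$; since $(\deg_G(x))_{x\in\Z}$ is a stationary ergodic sequence, Birkhoff's ergodic theorem gives $m^{-1}\sum_{x=0}^{m}\deg_G(x)\to b$ $\pp$-a.s. Applying this along $m=\lfloor n^{1/2}t\rfloor$ and rescaling shows that $n^{-1/2}\mu$ converges vaguely, under the identification above, to $b\,\mathrm{Leb}$.

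The heart of the matter --- and the only place the hypothesis $s>2$ is genuinely used --- is the resistance scaling. The upper bound $R(0,n)\le n$ follows from the nearest-neighbour path together with Rayleigh monotonicity. For a matching lower bound, I call $\{k,k+1\}$ a \emph{cut edge} if there is no edge $\{u,v\}$ with $u\le k<k+1\le v$ and $|u-v|\ge 2$; since exactly $\ell$ pairs at distance $\ell$ cross a given site,
\[
\pp\bigl(\{0,1\}\text{ is a cut edge}\bigr)=\exp\Bigl(-\sum_{\ell\ge2}\ell^{\,1-s}\Bigr),
\]
which is strictly positive precisely when $s>2$. The indicators that $\{k,k+1\}$ is a cut edge form a stationary ergodic sequence, so by Birkhoff's theorem the density of cut edges in $[0,n)$ converges a.s.\ to this positive probability; and since distinct cut edges separating $0$ from $n$ provide disjoint singleton cutsets, the Nash--Williams inequality gives $R(0,n)\ge\#\{\text{cut edges in }[0,n)\}$. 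Combining this with the subadditivity $R(0,m+n)\le R(0,m)+R(m,m+n)$ and stationarity, Kingman's subadditive ergodic theorem produces a deterministic $a\in(0,\infty)$ with $n^{-1}R(0,n)\to a$ $\pp$-a.s. Finally, because $R$ is a metric, $|R(0,x)-R(0,y)|\le R(x,y)\le|x-y|$, so the functions $t\mapsto n^{-1/2}R(0,\lfloor n^{1/2}t\rfloor)$ are uniformly Lipschitz and converge $\pp$-a.s.\ pointwise on $\mathbb{Q}$ to $t\mapsto a|t|$; by Arzel\`a--Ascoli the convergence is locally uniform, which together with the measure convergence gives the Gromov--Hausdorff-vague statement required above.

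The hard part, as indicated, is the resistance lower bound: one must show the rescaled resistance metric has a genuinely nondegenerate linear limit, i.e.\ that $a>0$, and this rests entirely on the positive density of cut edges. For $s\le 2$ this density is zero --- there are $\pp$-a.s.\ no cut points and $R(0,n)$ grows strictly sublinearly --- which is exactly why the case $d=1$, $s=2$ lies beyond the reach of these methods (cf.\ Remark \ref{critcase}). The remaining ingredients, namely the law of large numbers for the degree measure, the transfer from convergence of spaces to convergence of processes via \cite{Croy}, and the identification of the limit as a time-changed Brownian motion, are standard given this input, which is why we are content to sketch the details.
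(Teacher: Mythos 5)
Your proposal follows essentially the same route as the paper's proof: resistance scaling in the spirit of \cite{Croy}, with the measure convergence handled by the ergodic theorem, the linear resistance growth handled by Kingman's subadditive ergodic theorem, and the strict positivity of the resistance constant handled via the positive density of cut-points (which you compute directly rather than citing \cite[Lemma~10.1]{CS2} --- a pleasant self-contained touch). The problem is that your step from the one-variable limit $n^{-1}R(0,n)\to a$ to the Gromov--Hausdorff-vague convergence of the rescaled spaces does not go through as stated. You invoke Arzel\`a--Ascoli on $t\mapsto n^{-1/2}R(0,\lfloor n^{1/2}t\rfloor)$, which indeed gives locally uniform convergence of that one-variable function, but Gromov--Hausdorff-vague convergence of $(\Z,n^{-1/2}R,\cdot)$ requires the two-variable statement $n^{-1/2}R(\lfloor n^{1/2}s\rfloor,\lfloor n^{1/2}t\rfloor)\to a|s-t|$ uniformly on compacts. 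The bounds you actually have, namely $|R(0,x)-R(0,y)|\le R(x,y)\le|x-y|$, yield only $a|s-t|-o(1)\le n^{-1/2}R(\lfloor n^{1/2}s\rfloor,\lfloor n^{1/2}t\rfloor)\le|s-t|$; since generically $a<1$, this does not close. To fill the gap one must use the series law for resistance at cut-points: because cut-points have positive density, for any $s<t$ one can find cut-points $C$ and $C'$ within $o(n^{1/2})$ of $\lfloor n^{1/2}s\rfloor$ and $\lfloor n^{1/2}t\rfloor$, and additivity at cut-points gives $R(C,C')=R(C_0,C')-R(C_0,C)$, which reduces the two-variable limit to the one-variable limit you already have. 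This is exactly the step carried out in the paper's proof, and it is not optional.

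A second, smaller omission: the framework of \cite[Theorem~7.1]{Croy} requires, beyond the metric-measure convergence, a non-explosion/resistance-divergence hypothesis, namely $\lim_{r\to\infty}\liminf_n n^{-1}R(0,\{-rn,\dots,rn\}^c)=\infty$. You label this ``routine,'' but it does require an argument --- the paper derives it from the parallel law together with the cut-point estimates. You already have the ingredients (the same cut-point density), so this is a matter of writing a few more lines, but it should not be dismissed as automatic.
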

\begin{proof} Let $\tilde{\pi}$ be the measure on $\mathbb{Z}$ given by $\tilde{\pi}(\{x\})=\mathrm{deg}_G(x)$. Since $C:=\mathbf{E}(\tilde{\pi}(\{0\}))\in(0,\infty)$, we readily deduce from the ergodic theorem that, $\mathbf{P}$-a.s.,
\begin{equation}\label{claims1}
n^{-1}\tilde{\pi}\left(\{an,\dots,bn\}\right)\rightarrow C(b-a),\qquad \forall a,b\in\mathbb{R},\:a<b.
\end{equation}

Writing $R$ for the effective resistance on $\mathbb{Z}$, we further claim that there exists a deterministic constant $R_\infty\in(0,\infty)$ such that, $\mathbf{P}$-a.s.,
\begin{equation}\label{claims2}
\left(n^{-1}R(xn,yn)\right)_{x,y\in\mathbb{R}}\rightarrow \left(R_{\infty}|x-y|\right)_{x,y\in\mathbb{R}},
\end{equation}
uniformly on compacts. To check this, we first apply the triangle inequality for the effective resistance and Kingman's subadditive ergodic theorem to deduce that, $\mathbf{P}$-a.s.\
\[\left(n^{-1}R(0,xn)\right)_{x\in\mathbb{R}}\rightarrow \left(R_{\infty}|x|\right)_{x\in\mathbb{R}},\]
uniformly on compacts, where $R_\infty:=\inf_{n\geq 1} n^{-1}\mathbf{E}R(0,n)$. We note that $R_\infty\leq 1$, because the presence of nearest-neighbour edges ensures that $R(0,n)\leq n$. Now, from \cite[Lemma 10.1]{CS2}, we have that the cut-points of the underlying graph are dense on the appropriate scale, where we say that $x$ is a cut-point for $G$ if $\{x,x+1\}$ is the only edge in $E$ that crosses this interval. In particular, it is an elementary consequence of \cite[Lemma 10.1]{CS2} that if $C_x$ is the closest cut-point to $x$ that lies on the left-hand side of $x$, then, $\mathbf{P}$-a.s., $n^{-1}C_{xn}\rightarrow x$ uniformly on compacts. It follows that, $\mathbf{P}$-a.s., uniformly over compact regions of $0\leq x\leq y$,
\begin{eqnarray*}
\lefteqn{\left|n^{-1}R(xn,yn)-R_\infty|x-y|\right|}\\
&\leq& \left|n^{-1}R(C_{xn},C_{yn})-R_\infty|x-y|\right|+2\max_{z\in\{x,y\}}n^{-1}\left|zn-C_{zn}\right|\\
&= &\left|n^{-1}R(C_{0},C_{yn})-n^{-1}R(C_{0},C_{xn})-R_\infty|x-y|\right|+2\max_{z\in\{x,y\}}n^{-1}\left|zn-C_{zn}\right|\\
&\leq&\left|n^{-1}R(0,yn)-n^{-1}R(0,xn)-R_\infty|x-y|\right|+6\max_{z\in\{0,x,y\}}n^{-1}\left|zn-C_{zn}\right|\\
&\rightarrow& 0,
\end{eqnarray*}
where to deduce the equality, we apply the series law for resistors, which clearly holds at cut-points. To complete the proof of \eqref{claims2}, it remains to check that $R_\infty>0$. Since $R(0,n)$ is bounded below by the number of cut-points between 0 and $n$, this can be deduced by another application of \cite[Lemma 10.1]{CS2}.

Moreover, we have that, $\mathbf{P}$-a.s.,
\begin{equation}\label{claims3}
\lim_{r\rightarrow\infty}\liminf_{n\rightarrow\infty}n^{-1}R\left(0,\{-rn,\dots,rn\}^c\right)=\infty.
\end{equation}
Indeed, if we define $C_0$ and $C_{rn}$ as above, and let $\tilde{C}_{-rn}$ be the closest cut-point to $-rn$ that lies on the right-hand side of $-rn$, then
\[R\left(0,\{-rn,\dots,rn\}^c\right)\geq R\left(C_0,\{C_{rn},\tilde{C}_{-rn}\}\right)-R(0,C_0)\geq \frac{1}{2}\min_{z\in\{C_{rn},\tilde{C}_{-rn}\}}R(0,z)-R(0,C_0),\]
where we have applied the parallel law to deduce the second inequality. Hence applying the conclusion of the previous paragraph for point-wise resistances yields that
\[\liminf_{n\rightarrow\infty}n^{-1}R\left(0,\{-rn,\dots,rn\}^c\right)\geq \frac{R_\infty r}{2},\]
which clearly implies \eqref{claims3}.

Putting \eqref{claims1} and \eqref{claims2} together similarly to the argument of \cite[Theorem A.1]{CFJ}, we obtain that
\[\left(\mathbb{Z},n^{-1}R,n^{-1}\tilde{\pi},0,n^{-1}I_{\mathbb{Z}}\right),\]
where $I_{\mathbb{Z}}$ is the identity map on $\mathbb{Z}$,  $\mathbf{P}$-a.s.\ converges in the spatial Gromov-Hausdorff-vague topology (see \cite[Section 7]{Croy} and \cite[Section 2.2]{CHK} for details) to
\[\left(\mathbb{R},R_\infty d_E,C\mathcal{L},0,I_{\mathbb{R}}\right),\]
where $d_E$ is the Euclidean metric, $\mathcal{L}$ is the Lebesgue measure on $\mathbb{R}$, and $I_{\mathbb{R}}$ is the identity map on $\mathbb{R}$. Together with the resistance divergence of \eqref{claims3}, this enables us to apply \cite[Theorem 7.1]{Croy} to deduce a Brownian motion scaling limit for the continuous-time version of $X$, with mean one exponential holding times. The result for the discrete-time process readily follows.
\end{proof}

\subsection{Long-range percolation beyond the nearest-neighbour case} \label{ss:ll}

As we noted in Remark \ref{rem:lq}, to go beyond the nearest-neighbour case in establishing heat kernel lower bounds via the approach of this article, it will suffice to check the conditions (V) and (BS). In this section, we describe some progress in this direction, which allows us to consider the non-nearest-neighbour model of Section \ref{sec3} for $d\geq 1$ and $s\in (d,2d)$.

For (V), the essential work was completed by Biskup in \cite{B}, where estimates on the size of a largest percolation cluster in a box were given. In the following lemma, we transfer the desired estimate to the vertex set $V_n$. (We recall that $G$ is the infinite cluster of the long-range percolation model, and $G_n=(V_n,E_n)$ is the largest connected component of $G \cap [-n,n]^d$.)

\begin{lem}\label{vlem} If $d\geq 1$ and $s\in (d,2d)$, then LRP($d$,$s$) satisfies (V).
\end{lem}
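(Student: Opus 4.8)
The plan is to deduce (V) from the corresponding statement about the largest connected cluster in a box, which is precisely what Biskup establishes in \cite{B}. Recall that $G$ denotes the (unique) infinite cluster of LRP($d,s$), and $G_n=(V_n,E_n)$ is the largest connected component of $G\cap[-n,n]^d$. The key point is to distinguish between the largest cluster of the percolation configuration restricted to the box $[-n,n]^d$ (call it $\mathcal{C}_n$), which is the object controlled in \cite{B}, and our $V_n$, which is the largest component of the \emph{global} infinite cluster intersected with the box. First I would observe that, in the regime $s\in(d,2d)$, the model is in the strongly supercritical phase where \cite{B} gives that $|\mathcal{C}_n|\geq \theta_\infty(2n+1)^d/2$ (say) with probability at least $1-\exp(-cn^{d_0})$ for some $d_0>0$; in fact a stretched-exponential or better bound is available, and certainly enough to beat $\exp(-c(\log n)^2)$. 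Here $\theta_\infty>0$ is the percolation density. So the first step is simply to quote the relevant estimate from \cite{B} in a form that gives $\pp(|\mathcal{C}_n|\geq c_1 n^d)\geq 1-\exp(-c(\log n)^2)$.

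The second, and main, step is to show that with high probability $\mathcal{C}_n\subseteq V_n$, or more precisely that the part of $\mathcal{C}_n$ lying in a slightly smaller box is connected to the infinite cluster within $[-n,n]^d$, so that it is contained in $V_n$. The natural approach is a two-scale (or renormalisation) argument: tile $[-n,n]^d$ by sub-boxes of side length of order $n/2$ (or $(1-\varepsilon)n$), apply the cluster-size estimate of \cite{B} to each, and use the fact that in the supercritical regime two large clusters in overlapping or adjacent boxes of comparable size must merge with overwhelming probability — indeed, once $s<2d$, the probability that an edge joins two such boxes is bounded away from zero uniformly, and one gets a failure probability of at most $\exp(-cn^{2d-s})$ or so for the connection. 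A cleaner alternative, which I would prefer for brevity, is to use that by uniqueness of the infinite cluster (from \cite{AKN}) and supercriticality, the probability that $\mathcal{C}_n$ is \emph{not} the piece of the infinite cluster inside a concentric box $[-n(1-o(1)),n(1-o(1))]^d$ decays faster than $\exp(-c(\log n)^2)$; this kind of statement is standard and also essentially contained in the analysis of \cite{B}. Combining, on an event of probability at least $1-\exp(-c(\log n)^2)$ we have $|V_n|\geq |\mathcal{C}_{n(1-o(1))}|\geq c_2 n^d$, which is exactly (V) after adjusting constants.

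The main obstacle is the second step: carefully justifying that the large box-cluster coincides with (the bulk of) the intersection of the infinite cluster with the box, with a failure probability that beats $\exp(-c(\log n)^2)$. For $s\in(d,2d)$ this is genuinely easier than at general $s>d$ because long edges are sufficiently abundant that surgery arguments connecting nearby large components succeed with stretched-exponentially small failure probability; this is precisely why the lemma is restricted to this range and why the authors flag in the open questions that $s\geq 2d$ is not covered. I would therefore organise the write-up as: (i) state the input from \cite{B}; (ii) a short renormalisation lemma controlling $\pp(\mathcal{C}_n\not\subseteq V_n \text{ within a smaller box})$; (iii) assemble the pieces, checking that all error terms are $o(\exp(-c(\log n)^2))$ and that the final lower bound $c_2 n^d$ holds. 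The remaining details — volume of a box, adjusting $n$ by lower order terms, choice of constants — are routine.
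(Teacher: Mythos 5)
Your two–step plan is structurally the same as the paper's: (i) import Biskup's cluster–size estimate from \cite{B} for the largest connected component $\mathcal{C}_1(n)$ of the percolation configuration restricted to $[-n,n]^d$, and then (ii) show that $\mathcal{C}_1(n)$ is, with the required (better than $\exp(-c(\log n)^2)$) probability, a subset of the infinite cluster $G$, so that on the good event $|V_n|\geq |\mathcal{C}_1(n)|\geq cn^d$. The paper quotes \cite[Theorem 3.2]{B}: for any $s'\in(s,2d)$ there is $\varepsilon>0$ with $\mathbf{P}(|\mathcal{C}_1(n)|<\varepsilon n^d)\leq e^{-\varepsilon n^{2d-s'}}$ for $n$ large, which is the precise form of your step (i).

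Where your proposal has a genuine gap is in step (ii). Your first (tiling) approach keeps everything inside $[-n,n]^d$: you tile the box by sub-boxes of side $\sim n$, and argue that the big clusters in adjacent sub-boxes merge. This shows there is a unique giant cluster in the box, but it does not show that this giant cluster is part of $G$; a priori $\mathcal{C}_1(n)$ could be a very large \emph{finite} cluster. To rule that out you must connect $\mathcal{C}_1(n)$ to arbitrarily large distances, and the construction as you describe it never leaves the box of scale $n$. Your ``cleaner alternative'' asserts that the coincidence of $\mathcal{C}_1(n)$ with the bulk of $G$ in a concentric box is ``standard and essentially contained in \cite{B}.'' It is indeed closely related to the proof of \cite[Corollary 3.3]{B} (the paper says ``cf.''), but it is not a quotable black box here, and simply citing it would not be acceptable as a self-contained argument.

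The missing idea — and what the paper actually does — is to build a chain of boxes that \emph{grows to infinity}: set $r_n^k=2^kn$, take centres $x_n^k$ along the first coordinate axis with $x_n^0=0$ and $|x_n^{k+1}-x_n^k|=3r_n^k$, so the balls $B_\infty(x_n^k,r_n^k)$ for $k\geq 1$ are disjoint and consecutive ones touch. Apply the Biskup volume estimate at each scale $r_n^k$ to get a large cluster $\mathcal{C}_n^k$ in each ball, and bound the probability that $\mathcal{C}_n^k$ and $\mathcal{C}_n^{k+1}$ are \emph{not} joined by a direct edge; since each has $\succeq (r_n^k)^d$ vertices and the distance between the balls is $\preceq r_n^k$, this failure probability is $\preceq\exp(-c (n 2^k)^{2d-s'})$. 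The crucial feature is that these failure probabilities are summable in $k$ and dominated by the $k=0$ term, yielding $\mathbf{P}(\mathcal{C}_1(n)\subseteq G)\geq 1-Ce^{-cn^{2d-s'}}$, which comfortably beats $\exp(-c(\log n)^2)$. Because the chain of connected large clusters is infinite, uniqueness of the infinite cluster then pins down $\mathcal{C}_1(n)$ as part of $G$. Without a construction that escapes to infinity, step (ii) does not close; that is the concrete piece you need to add.
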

\begin{proof} Let $\mathcal{C}_1(n)$ be the largest connected component of  LRP($d$,$s$) inside $[-n,n]^d$. It is proved as \cite[Theorem 3.2]{B} that, for any $s'\in (s,2d)$, there exists a constant $\varepsilon>0$ such that
\begin{equation}\label{biskvol}
\mathbf{P}\left(\left|\mathcal{C}_1(n)\right|< \varepsilon n^d\right)\leq e^{-\varepsilon n^{2d-s'}}
\end{equation}
for all large $n$. We seek to replace $\mathcal{C}_1(n)$ by $V_n$ in this estimate, which we will do by showing that $\mathcal{C}_1(n)$ is a part of the infinite component with suitably high probability (cf.\ the proof of \cite[Corollary 3.3]{B}). To this end, for a given $n$, define $r_n^k:=2^kn$, and let $x_n^k$ be a sequence of points on the first coordinate axis such that $x_n^0=0$ and $|x_n^{k+1}-x_n^k|=3r_n^k$. In particular, the $\ell_\infty$-balls $B_\infty(x_n^k,r_n^k)$, $k\geq1$ are disjoint, but consecutive elements of the sequence touch. Write $\mathcal{C}_n^k$ for the largest connected component of  LRP($d$,$s$) inside $B_\infty(x_n^k,r_n^k)$, and define $\mathcal{C}_n^k\not\leftrightarrow\mathcal{C}_n^{k+1}$ to be the event that the two components in question are not connected by a direct edge. Applying \eqref{biskvol}, we then have that, for large $n$,
\begin{eqnarray*}
\mathbf{P}\left(\mathcal{C}_n^k\not\leftrightarrow\mathcal{C}_n^{k+1}\right)&\leq& 2e^{-\varepsilon (r_n^k)^{2d-s'}}+\mathbf{P}\left(\mathcal{C}_n^k\not\leftrightarrow\mathcal{C}_n^{k+1},\:|\mathcal{C}_n^k|\geq \varepsilon(r_n^k)^d,\:|\mathcal{C}_n^{k+1}|\geq \varepsilon(r_n^{k+1})^d\right)\\
&\leq &2e^{-\varepsilon (r_n^k)^{2d-s'}}+e^{- \varepsilon^2(r_n^k)^d(r_n^{k+1})^d/(2r_n^k+2r_n^{k+1})^s}\\
&\leq &Ce^{-2c (n2^k)^{2d-s'}}\\
&\leq &Ce^{-c (n^{2d-s'}+  2^{k(2d-s')})},
\end{eqnarray*}
where for the second inequality we have used the fact that the maximal distance between points in $\mathcal{C}_n^k$ and $\mathcal{C}_n^{k+1}$ is $2r_n^k+2r_n^{k+1}$, and for the last, we use that $a+b\leq 2ab$ for $a,b\geq 1$. Hence, writing $\mathcal{C}_n^k\leftrightarrow\mathcal{C}_n^{k+1}$ for the event that the two components in question are connected by a direct edge, we find that
\begin{eqnarray}
\mathbf{P}\left(V_n=\mathcal{C}_1(n)\right)&=&\mathbf{P}\left(\mathcal{C}_1(n)\subseteq G\right)\nonumber\\
&\geq &\mathbf{P}\left(\mathcal{C}_n^k\leftrightarrow\mathcal{C}_n^{k+1}\mbox{ for all }k\geq 0\right)\nonumber\\
&\geq &1-\sum_{k=0}^\infty Ce^{-c (n^{2d-s'}+  2^{k(2d-s')})}\nonumber\\
&=&1-Ce^{-c n^{2d-s'}}.\label{sameclust}
\end{eqnarray}
Putting this bound together with \eqref{biskvol}, we readily obtain (V).
\end{proof}

We next present sufficient conditions for the LRP($d$,$s$) model of Section \ref{sec3} to satisfy the Benjamini-Schramm convergence condition (BS). Roughly speaking, the first of the conditions we introduce means that there is only a small probability that a long path avoids the largest connected component in a box, and the second one implies a weak law of large numbers for the size of the component. (Clearly, both conditions are trivial in the nearest-neighbour case.)

\begin{lem}\label{bslem}
Suppose LRP($d$,$s$) (as defined in Section \ref{sec3}) satisfies the volume condition (V). Let $(a_n)_{n\geq 1}$ be a divergent sequence of positive integers such that $a_n=o(n)$, and suppose that
\begin{itemize}
\item[(a)] $\sup_{x \in W_n^*} \pp(x \in G \setminus W_n) =o(1)$,
\item[(b)] $\Var(|W_n|) =o(n^{2d})$,
\end{itemize}
where $W_n^*=[-n+a_n,n-a_n]^d\cap \Z^d$ and $W_n=W_n^*\cap V_n$. Then the random rooted graphs $(G_n,\rho_n)_{n\geq 1}$, with $\rho_n$ uniformly chosen in $V_n$, Benjamini-Schramm converge to $(G,\rho)$, conditioned that $\rho \in G$.
\end{lem}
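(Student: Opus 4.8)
The plan is to verify the defining property of Benjamini--Schramm convergence: for every finite rooted graph $H$ and every $r\in\mathbb{N}$,
\[
\pp\bigl(B_{G_n}(\rho_n,r)\cong H\bigr)\longrightarrow \pp\bigl(B_G(\rho,r)\cong H\mid\rho\in G\bigr)=\frac{p_H}{\theta},
\]
where $p_H:=\pp(B_G(\rho,r)\cong H)$, $\theta:=\pp(\rho\in G)$, and we used that $\{B_G(\rho,r)\cong H\}\subseteq\{\rho\in G\}$ (and $\theta>0$, since the model percolates and is translation invariant). Since $\rho_n$ is uniform on $V_n$ given $G$, we can write $\pp(B_{G_n}(\rho_n,r)\cong H)=\E[Z_n/|V_n|]$ with $Z_n:=\bigl|\{x\in V_n:B_{G_n}(x,r)\cong H\}\bigr|$. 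The strategy is: (i) pin down the first-order asymptotics of $\E[Z_n]$ and $\E[|V_n|]$ using translation invariance together with hypothesis (a); (ii) upgrade the denominator to a weak law of large numbers using hypothesis (b); and (iii) combine (i) and (ii) to pass through the ratio.

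For (i), the key local surgery is that if $x\in W_n^*$ and $B_G(x,r)\subseteq B_\infty(x,a_n)$, then every shortest path in $G$ from $x$ to a vertex of $B_G(x,r)$ stays inside $B_\infty(x,a_n)\subseteq[-n,n]^d$; hence, on the additional event $\{x\in V_n\}$, the ball $B_G(x,r)$ with its induced edges lies in $G_n$ and $B_{G_n}(x,r)=B_G(x,r)$ as rooted graphs. By translation invariance, $\pp(x\in G,\,B_G(x,r)\not\subseteq B_\infty(x,a_n))=\pp(\rho\in G,\,B_G(\rho,r)\not\subseteq B_\infty(\rho,a_n))$, which tends to $0$ as $a_n\to\infty$ because $s>d$ makes $B_G(\rho,r)$ $\pp$-a.s.\ finite. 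Combining this with the observation that $\{x\in G\setminus V_n\}=\{x\in G\setminus W_n\}$ for $x\in W_n^*$ (so hypothesis (a) bounds $\sup_{x\in W_n^*}\pp(x\in G\setminus V_n)=o(1)$), an elementary union bound yields
\[
\sup_{x\in W_n^*}\bigl|\pp(x\in V_n,\,B_{G_n}(x,r)\cong H)-p_H\bigr|=o(1),\qquad \sup_{x\in W_n^*}\bigl|\pp(x\in V_n)-\theta\bigr|=o(1).
\]
Summing over $x\in[-n,n]^d$, discarding the boundary layer $[-n,n]^d\setminus W_n^*$ (which contains $\asymp a_n n^{d-1}=o(n^d)$ sites) and using $|W_n^*|=(1+o(1))(2n+1)^d$, we get $\E[Z_n]=(p_H+o(1))(2n+1)^d$ and, likewise, $\E[|W_n|]=(\theta+o(1))(2n+1)^d$ and $\E[|V_n|]=\E[|W_n|]+o(n^d)=(\theta+o(1))(2n+1)^d$.

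For (ii), hypothesis (b) gives $\E\bigl[(|W_n|-\theta(2n+1)^d)^2\bigr]\le 2\,\Var(|W_n|)+2\bigl(\E|W_n|-\theta(2n+1)^d\bigr)^2=o(n^{2d})$, so $|W_n|/(2n+1)^d\to\theta$ in $L^2$; since $0\le|V_n|-|W_n|\le|[-n,n]^d\setminus W_n^*|=o(n^d)$ deterministically, also $|V_n|/(2n+1)^d\to\theta$ in probability. For (iii), fix $\delta\in(0,\theta)$ and set $\mathcal G_n:=\{(\theta-\delta)(2n+1)^d\le|V_n|\le(\theta+\delta)(2n+1)^d\}$, an event whose probability tends to $1$. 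On $\mathcal G_n$ the ratio $Z_n/|V_n|$ is sandwiched between $Z_n/\bigl((\theta+\delta)(2n+1)^d\bigr)$ and $Z_n/\bigl((\theta-\delta)(2n+1)^d\bigr)$, while $Z_n\le|V_n|\le(2n+1)^d$ makes the contributions of $\mathcal G_n^c$ to $\E[Z_n/|V_n|]$ and to $(2n+1)^{-d}\E[Z_n\mathbb{I}_{\mathcal G_n}]$ both $o(1)$; together with $\E[Z_n]=(p_H+o(1))(2n+1)^d$ this yields
\[
\frac{p_H}{\theta+\delta}\le\liminf_{n\to\infty}\E\!\left[\frac{Z_n}{|V_n|}\right]\le\limsup_{n\to\infty}\E\!\left[\frac{Z_n}{|V_n|}\right]\le\frac{p_H}{\theta-\delta},
\]
and letting $\delta\downarrow0$ gives $\pp(B_{G_n}(\rho_n,r)\cong H)\to p_H/\theta$, as required.

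I expect step (iii) to be the only genuinely new point relative to the nearest-neighbour argument of Lemma~\ref{bsclem}: there $V_n=[-n,n]^d$ is deterministic, whereas here the normalising factor $|V_n|$ in $\pp(B_{G_n}(\rho_n,r)\cong H)=\E[Z_n/|V_n|]$ is random, so controlling the expectations of numerator and denominator is not by itself sufficient; concentration of $|V_n|$ is needed, and this is exactly the role of hypothesis (b) (via the deterministic comparison of $|V_n|$ with $|W_n|$). The remaining ingredients — the local identification of $B_{G_n}$ with $B_G$ and the passage between $\{x\in V_n\}$ and $\{x\in G\}$ afforded by hypothesis (a) — are routine refinements of the reasoning already used for Lemma~\ref{bsclem}. (We note that the volume condition (V) is not used directly in the above, being subsumed by (a), (b) and translation invariance.)
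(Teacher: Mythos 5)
Your argument is correct, and the architecture is the same as the paper's in spirit: use hypothesis (a), translation invariance, and the $\mathbf{P}$-a.s.\ finiteness of $B_G(\rho,r)$ to compare $\I(x\in V_n,\,B_{G_n}(x,r)\cong H)$ with $\I(x\in G,\,B_G(x,r)\cong H)$ uniformly over $x\in W_n^*$, discard the $o(n^d)$ boundary layer, and then use hypothesis (b) to handle the random normalisation. Where you take a genuinely different route is in that last step. The paper writes the two quantities as ratios $A_n/B_n$ and $A_n'/B_n'$ (with $B_n=|W_n^*|\pp(\rho\in G)$ deterministic, $B_n'=|W_n|$ random), estimates
\[
\left|\E\!\left(\frac{A_n}{B_n}-\frac{A_n'}{B_n'}\right)\right|\le\frac{1}{B_n}\E|A_n-A_n'|+\E\!\left(\frac{A_n'|B_n-B_n'|}{B_nB_n'}\right),
\]
and controls the second term by Cauchy--Schwarz together with (V), (a) and (b). You instead keep the exact identity $\pp(B_{G_n}(\rho_n,r)\cong H)=\E[Z_n/|V_n|]$, first establish the weak law $|V_n|/(2n+1)^d\to\theta$ in probability from (b), (a) and the deterministic squeeze $0\le|V_n|-|W_n|\le o(n^d)$, and then pass through the ratio by sandwiching $\E[Z_n/|V_n|]$ on a good event $\mathcal{G}_n$ and letting $\delta\downarrow0$. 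This is a bit more modular: it cleanly separates the ``local surgery'' step (only (a) needed), the concentration step (only (b) needed), and a generic Slutsky-type ratio argument, and it avoids the $o(1)$ bookkeeping involved in the paper's initial conditioning on $\{\rho_n\in W_n\}$. One small caveat about your closing remark: saying (V) is ``subsumed'' by (a), (b) and translation invariance slightly undersells its role. You do correctly observe that percolation plus translation invariance already forces $\theta>0$ (density zero would make $G$ empty a.s.), and once $\theta>0$ your argument stands without further appeal to (V); but the paper also uses (V) more directly, to bound $n^d\pp(|W_n|\le cn^d)$ in its display following Cauchy--Schwarz, a step you have avoided entirely by the good-event device.
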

\begin{proof} Assume that (V), (a) and (b) hold. We need to show that for any finite graph $H$ and $r \in \mathbb{N}$,
\begin{equation} \label{ll}
    \lim_{n\rightarrow \infty} \pp(B_{G_n}(\rho_n,r)=H) = \pp(B_{G}(\rho,r)=H\:|\:\rho\in G).
\end{equation}
First, we have
\begin{eqnarray*}
\pp(\rho_n \not \in W_n) = \E(|V_n\setminus W_n|/|V_n|) &\preceq&n^{-d} \E(|V_n\setminus W_n|) + \pp(|V_n| \leq cn^d)\\
&\preceq& n^{-d} n^{d-1} a_n +  \exp(-c(\log n)^2) =o(1).
\end{eqnarray*}
Therefore,
\begin{eqnarray*}
\pp[B_{G_n}(\rho_n,r)=H] &=& \pp[B_{G_n}(\rho_n,r)=H \mid \rho_n \in W_n]+o(1)\\
&=& \E \left(\frac{1}{|W_n|}\sum_{x \in W_n}\I(B_{G_n}(x,r)=H)\right)+o(1)\\
&=&\E \left(\frac{1}{|W_n|}\sum_{x \in W^*_n}\I(B_{G_n}(x,r)=H; x \in W_n)\right)+o(1).
\end{eqnarray*}
Moreover, by translation invariance,
\begin{eqnarray*}
\pp [B_{G}(\rho,r)=H\mid  \rho  \in G] &=& \frac{\pp \left(B_{G}(\rho,r)=H; \, \rho \in G \right)}{ \pp(\rho \in G)} \notag \\
&=& \frac{1}{|W_n^*|\pp(\rho \in G) } \E \left( \sum_{x \in W_n^*} \I(B_{G}(x,r)=H; \, x \in G) \right).
\end{eqnarray*}
It follows from the last two equations that
\begin{equation} \label{boa4}
    \left|\pp[B_{G_n}(\rho_n,r)=H] -\pp [B_{G}(\rho,r)=H\mid  \rho  \in G]\right| \leq   \left|\E\left(  \frac{A_n}{B_n} - \frac{A_n'}{B_n'}  \right) \right|+ o(1),
\end{equation}
where
\begin{eqnarray*}
&A_n  = \sum_{x \in W_n^*} \I(B_G(x,r)=H; \, x \in G),\qquad &B_n = |W_n^*|\pp(\rho \in G), \\
&A'_n= \sum_{x \in W_n^*} \I(B_{G_n}(x,r)=H; \, x \in W_n), \qquad &B'_n = |W_n|.
\end{eqnarray*}
Observe that
\begin{eqnarray}\label{ddd1}
\left| \E \left( \frac{A_n}{B_n} - \frac{A'_n}{B'_n} \right) \right|  \leq  \frac{1}{B_n} \E[|A_n-A_n'|] + \E\left( \frac{A_n'|B_n-B_n'|}{B_n B_n'} \right).
\end{eqnarray}
To bound the first term, note that
\begin{eqnarray*}
   \E |A_n-A_n'|&\leq& \E\sum_{x \in W_n^*}\left( \I(x \in G \setminus W_n)+\I(x \in G,\:B_G(x,r)\not\subseteq [-n,n]^d)\right)\\
   &\preceq & n^d \max_{x \in W_n^*} \left( \pp(x \in G \setminus W_n)+\pp(x \in G,\:B_G(x,r)\not\subseteq [x-a_n,x+a_n]^d)\right)\\
   &=&o(n^d)+n^d\pp(\rho \in G,\:B_G(\rho,r)\not\subseteq [-a_n,a_n]^d)\\
   &=&o(n^d),
\end{eqnarray*}
where we have used that $|W_n^*|\asymp n^d$ for the second inequality, (a) and translation invariance for the first equality, and the almost-sure finiteness of $B_G(\rho,r)$ (and the divergence of $(a_n)_{n\geq 1}$) for the second equality. Since $B_n\asymp n^d$, it follows that
\begin{equation}\label{ddd2}
\frac{1}{B_n} \E[|A_n-A_n'|]=o(1).
\end{equation}
Using that $A_n' \leq |W_n^*| \asymp B_n$, we further obtain that
\begin{eqnarray}
\E\left( \frac{A_n'|B_n-B_n'|}{B_n B_n'} \right) &\preceq&  \E\left( \frac{|B_n-B_n'|}{ B_n'} \right) =   \E\left( \frac{||W_n|-|W_n^*|\pp(\rho \in G)|}{ |W_n|} \right) \notag \\
&\preceq & n^{-d} \E[||W_n|-|W_n^*|\pp(\rho \in G)|] +  n^d\pp[|W_n| \leq cn^d] \notag \\
&\preceq& n^{-d} \left(\E \left(\left( |W_n|-|W_n^*|\pp(\rho \in G)\right)^2 \right) \right)^{1/2} + \exp(-c(\log n)^2),\label{ddd3}
\end{eqnarray}
by using the Cauchy-Schwarz inequality and (V) (and the fact that $|V_n|\preceq |W_n|+n^{d-1}a_n= |W_n|+o(n^{d})$). Additionally,
\begin{eqnarray} \label{covw}
\lefteqn{\E \left(\left( |W_n|-|W_n^*|\pp(\rho \in G)\right)^2 \right) = \E \left(\left( \sum_{x \in W_n^*} (\I(x \in W_n) - \pp(x \in G))\right)^2 \right) }\notag \\
&=& \E \left(\left( \sum_{x \in W_n^*} (\I(x \in W_n) - \pp(x \in W_n) - \pp(x \in G \setminus W_n))\right)^2 \right) \notag \\
&\leq& 2\E \left(\left( \sum_{x \in W_n^*} \I(x \in W_n) - \pp(x \in W_n)  \right)^2 \right) +2 |W_n^*|^2 \max_{x\in W_n^*}\pp[x \in G \setminus W_n] \notag \\
&=& 2 \Var[|W_n|]+ 2 |W_n^*|^2 \max_{x\in W_n^*}\pp[x \in G \setminus W_n] \notag \\
&=& o(n^{2d}),
\end{eqnarray}
by using (a) and (b). Combining the estimates \eqref{boa4}--\eqref{covw}, and using (a) again, we obtain \eqref{ll}.
\end{proof}

In the subsequent lemma, we apply Lemma \ref{bslem} for $d\geq 1$ and $s\in (d,2d)$. We highlight that the proof depends on the estimates \eqref{biskvol} and \eqref{sameclust} from the proof of Lemma \ref{vlem}, and also two further statements from \cite[Theorem 2]{CS}. Inspecting the latter reference, one would find that \cite[Theorem 2]{CS} is stated for the smaller range $s\in (d,\min\{d+2,2d\})$. However, as is commented below \cite[Theorem 2]{CS} (and a careful checking of the argument establishes), this restriction, which is principally due to the main focus of the paper being on the stable regime, is not essential, and the percolation estimates of \cite[Theorem 2]{CS} extend to the range $s\in (d,2d)$.

\begin{lem}\label{bschecklem} If $d\geq1$ and $s\in(d,2d)$, then LRP($d$,$s$) satisfies (BS).
\end{lem}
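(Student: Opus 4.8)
The plan is to deduce the claim from Lemma~\ref{bslem}: its volume hypothesis (V) has already been verified for $s\in(d,2d)$ in Lemma~\ref{vlem}, so it suffices to produce a divergent sequence $a_n=o(n)$ of positive integers for which conditions (a) and (b) of Lemma~\ref{bslem} hold. Any slowly growing choice, say $a_n=\lfloor\log n\rfloor$, will serve; as in Lemma~\ref{bslem} we write $W_n^*=[-n+a_n,n-a_n]^d\cap\Z^d$ and $W_n=W_n^*\cap V_n$.

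To check (a), observe that for $x\in W_n^*$ one has $\pp(x\in G\setminus W_n)=\pp(x\in G,\ x\notin V_n)$. Replacing $V_n$ by $\mathcal{C}_1(n)$, the largest connected component of LRP($d$,$s$) inside $[-n,n]^d$, and using the estimate \eqref{sameclust} from the proof of Lemma~\ref{vlem}, which gives $\pp(V_n=\mathcal{C}_1(n))\ge 1-Ce^{-cn^{2d-s'}}$, we obtain
\[\sup_{x\in W_n^*}\pp(x\in G,\ x\notin V_n)\le Ce^{-cn^{2d-s'}}+\pp\big(\exists\, x\in W_n^*:\ x\in G,\ x\notin\mathcal{C}_1(n)\big).\]
The second term tends to $0$ by the percolation estimates of \cite[Theorem~2]{CS} (valid throughout $s\in(d,2d)$, as explained in the paragraph preceding the statement), which imply that with probability tending to $1$ every vertex of $[-n+a_n,n-a_n]^d$ lying in the infinite cluster $G$ in fact lies in $\mathcal{C}_1(n)$, since otherwise there would be, with non-negligible probability, a second macroscopic cluster (or an anomalously long connection) inside the box. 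This establishes (a).

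For (b), combine (a) with the elementary bounds $0\le |W_n^*\cap G|-|W_n|\le|W_n^*|\preceq n^d$ to get
\[\E\big[(|W_n^*\cap G|-|W_n|)^2\big]\le|W_n^*|\,\E\big[|W_n^*\cap G|-|W_n|\big]=|W_n^*|\sum_{x\in W_n^*}\pp(x\in G\setminus W_n)=o(n^{2d}),\]
whence $\Var(|W_n|)\le 2\Var(|W_n^*\cap G|)+o(n^{2d})$. Now $|W_n^*\cap G|/|W_n^*|$ is a box-average over $x\in W_n^*$ of the translation-covariant indicator $\I(x\in G)$; since $\pp$ is a product measure over the edges of $\Z^d$ and hence ergodic (indeed mixing) for the lattice shift, the multidimensional ergodic theorem gives $|W_n^*\cap G|/|W_n^*|\to\pp(\rho\in G)$ almost surely, and, being $[0,1]$-valued, the convergence also holds in $L^2$; equivalently, one may quote the weak law of large numbers for $|W_n|$ contained in \cite[Theorem~2]{CS}. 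In either case $\Var(|W_n^*\cap G|)=o(n^{2d})$, so (b) follows, and an application of Lemma~\ref{bslem} completes the proof.

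I expect condition (a) to be the only genuinely delicate point: everything else is bookkeeping, whereas (a) is the assertion that, away from the boundary, the restriction of the global infinite cluster to a large box is indistinguishable from the largest cluster of that box, and the availability of the underlying uniqueness / no-second-macroscopic-cluster estimates (due to Biskup \cite{B} and Crawford--Sly \cite{CS}) is exactly what confines the argument to $s<2d$. The one subtlety beyond citing those results is that they are stated in \cite{CS} only for $s<\min\{d+2,2d\}$; as noted above, this restriction is inessential and they extend to all $s\in(d,2d)$.
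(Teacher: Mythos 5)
Your plan is the right one — reduce to Lemma~\ref{bslem} via (V) from Lemma~\ref{vlem} — and your treatment of condition (b) via the multidimensional ergodic theorem (plus the comparison between $|W_n^*\cap G|$ and $|W_n|$) is a clean, valid alternative to the paper's covariance-decomposition argument. However, your proof of condition (a) contains a genuine gap, and it is exactly the place you flag as the delicate one.

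The problem is the passage from a supremum of single-vertex probabilities to a union probability. You write
\[
\sup_{x\in W_n^*}\pp\big(x\in G\setminus W_n\big)\ \leq\ Ce^{-cn^{2d-s'}}+\pp\big(\exists\,x\in W_n^*:\ x\in G,\ x\notin\mathcal{C}_1(n)\big),
\]
and then assert the second term is $o(1)$ by quoting \cite[Theorem 2]{CS}. This bound is valid as an inequality, but it is lossy: condition (a) only requires the \emph{per-vertex} probability $\pp(x\in G\setminus W_n)$ to vanish uniformly in $x$, whereas the right-hand side asks that \emph{no} vertex in $W_n^*$ misbehaves. The estimates available from \cite[Theorem 2]{CS} are of the type $\pp(0\leftrightsquigarrow B_\infty(0,a_n)^c,\ 0\notin\mathcal{C}_1(a_n))=o(1)$ and $\pp(|\mathcal{C}_2(n)|\geq\varepsilon a_n^d)=o(1)$; these are single-site / single-box statements and do not supply the $o(n^{-d})$ per-vertex decay that a union bound over $\asymp n^d$ sites would need. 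Indeed, for $a_n$ growing only polylogarithmically one should expect the per-vertex probability to decay at best polylogarithmically in $n$, so the union event almost certainly has probability bounded away from $0$ (near the inner boundary of $W_n^*$ there will typically be vertices of $G$ whose route to infinity exits the box before joining $\mathcal{C}_1(n)$). The paper circumvents this by never forming the union: it introduces the local cluster $\mathcal{C}_1(x,a_n)$ in the ball $B_\infty(x,a_n)$, shows $\max_x\pp(\mathcal{C}_1(x,a_n)\not\subseteq V_n)=o(1)$ using the second-largest-cluster bound, and then, crucially, observes that $\pp(x\in G\setminus\mathcal{C}_1(x,a_n))$ is independent of $x\in W_n^*$ by translation invariance, so the supremum over $x$ collapses to a single fixed-vertex probability, exactly the quantity controlled by \cite[Theorem 2]{CS}. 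That translation-invariance step is the missing idea in your argument. A secondary point: you take $a_n=\lfloor\log n\rfloor$, but the estimate $\pp(|\mathcal{C}_2(n)|\geq\varepsilon a_n^d)=o(1)$ from \cite[Theorem 2]{CS} requires $a_n=(\log n)^\Delta$ with $\Delta$ sufficiently large, so your choice of $a_n$ is likely too small for the cited bounds to apply.
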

\begin{proof} We will check the two conditions of Lemma \ref{bslem}.

Towards verifying (a), let $x\in W_n^*$, and $\mathcal{C}_1(x,a_n)$ be the largest connected component of LRP($d$,$s$) in the $\ell_\infty$-ball $B_\infty(x,a_n)$. We first claim that if $a_n=(\log n)^\Delta$ for suitably large $\Delta$, then
\begin{equation}\label{firstb}
\max_{x\in W_n^*}\mathbf{P}\left(\mathcal{C}_1(x,a_n)\not\subseteq V_n\right)=o(1).
\end{equation}
Letting $\mathcal{C}_1(n)$ and $\mathcal{C}_2(n)$ be the first and second largest components of LRP($d$,$s$) inside $[-n,n]^d$, respectively, we have that, for any $\varepsilon>0$,
\begin{eqnarray*}
\mathbf{P}\left(\mathcal{C}_1(x,a_n)\not\subseteq V_n\right)&\leq&
\mathbf{P}\left(|\mathcal{C}_1(x,a_n)|<\varepsilon a_n^d\right)+\mathbf{P}\left(\mathcal{C}_1(x,a_n)\not\subseteq V_n,\:|\mathcal{C}_1(x,a_n)|\geq \varepsilon a_n^d\right)\\
&\leq &\mathbf{P}\left(|\mathcal{C}_1(a_n)|<\varepsilon a_n^d\right)+\mathbf{P}\left(|\mathcal{C}_2(n)|\geq \varepsilon a_n^d\right)+\mathbf{P}\left(V_n\neq \mathcal{C}_1(n)\right),
\end{eqnarray*}
where to obtain the second inequality we use that on the event $\{V_n=\mathcal{C}_1(n),\:\mathcal{C}_1(x,a_n)\not\subseteq V_n\}$, it holds that $|\mathcal{C}_2(n)|\geq |\mathcal{C}_1(x,a_n)|$. Moreover, we highlight that we have used the translation invariance of the model to derive a bound that does not depend on the choice of $x\in W_n^*$. By \eqref{biskvol} and \eqref{sameclust}, the first and third probabilities above are $o(1)$ as $n\rightarrow\infty$. As for the second term, this is shown to be $o(1)$ in \cite[Theorem 2]{CS} for $\Delta$ chosen large enough. Hence we have established \eqref{firstb}.

Applying the estimate of previous paragraph, we find that
\begin{eqnarray*}
\max_{x\in W_n^*}\mathbf{P}\left(x\in G\backslash W_n\right)&\leq &o(1)+\max_{x\in W_n^*}\mathbf{P}\left(x\in G\backslash \mathcal{C}_1(x,a_n)\right)\\
&\leq &o(1)+\max_{x\in W_n^*}\mathbf{P}\left(x\leftrightsquigarrow B_\infty(x,a_n)^c,\: x\not\in \mathcal{C}_1(x,a_n)\right)\\
&=&o(1)+\mathbf{P}\left(0\leftrightsquigarrow B_\infty(0,a_n)^c,\: 0\not\in \mathcal{C}_1(a_n)\right)
\end{eqnarray*}
where $x\leftrightsquigarrow B_\infty(x,a_n)^c$ means that $x$ is connected to the complement of $B_\infty(x,a_n)$, but not necessarily by a single edge. The probability in the final line above is shown to be $o(1)$ in \cite[Theorem 2]{CS}. This confirms (a).

For proving condition (b) of Lemma \ref{bslem}, we first observe
\[\Var(|W_n|)\preceq (na_n)^d +\sum_{\substack{x,y\in W_n^*:\\|x-y|>2a_n}}\mathrm{Cov}(\mathbb{I}(x\in W_n),\mathbb{I}(y\in W_n)).\]
Now, some elementary manipulation of probabilities allows it to be checked that, for $x,y\in W_n^*$,
\begin{eqnarray*}
\mathrm{Cov}(\mathbb{I}(x\in W_n),\mathbb{I}(y\in W_n))&\leq& \mathrm{Cov}(\mathbb{I}(x\in \mathcal{C}_1(x,a_n)),\mathbb{I}(y\in  \mathcal{C}_1(y,a_n)))\\
&&+2\max_{z\in W_n^*}\mathbf{P}\left(z\in \mathcal{C}_1(z,a_n)\backslash W_n \right)+2\max_{z\in W_n^*}\mathbf{P}\left(z\in W_n\backslash \mathcal{C}_1(z,a_n)\right).
\end{eqnarray*}
Moreover, if $|x-y|> 2a_n$, then the first term in the upper bound is zero. To bound the first of the probabilities, note that if the event $z\in  \mathcal{C}_1(z,a_n)\backslash W_n$ occurs, then it must be the case that $\mathcal{C}_1(z,a_n)\not\subseteq V_n$. Thus, by \eqref{firstb},
\[\max_{z\in W_n^*}\mathbf{P}\left(z\in \mathcal{C}_1(z,a_n)\backslash W_n \right)\leq\max_{z\in W_n^*}\mathbf{P}\left(z\in \mathcal{C}_1(z,a_n)\not\subseteq V_n \right)=o(1).\]
Additionally, since $W_n\subseteq V_n\subseteq G$, we have that
\[\max_{z\in W_n^*}\mathbf{P}\left(z\in W_n\backslash \mathcal{C}_1(z,a_n)\right)\leq \max_{z\in W_n^*}\mathbf{P}\left(z\in G\backslash \mathcal{C}_1(z,a_n)\right).\]
That the upper bound is $o(1)$ was established earlier in the proof. Combining the previous estimates yields (b), as desired.
\end{proof}

From Remark \ref{rem:lq} and Lemmas \ref{vlem} and \ref{bschecklem} (and the heat kernel upper bound of \cite{CS}), we obtain the following result for the long-range percolation model of Section \ref{sec3}, which we underline does not require nearest-neighbour connections. In particular, this result verifies that the on-diagonal heat kernel estimates of \cite{CS} are sharp (up to logarithmic factors) throughout the stable regime $s\in(d, \min\{d+2,2d\})$ (and not just when $s\in (d,d+1)$, which is what is implied by the invariance principle of \cite{CS2}).

\begin{cor}\label{corcorcor} If $d\geq1$ and $s\in(d, 2d)$, then LRP($d$,$s$) satisfies the relevant lower heat kernel bounds of Theorems \ref{lrpq} and \ref{lrpa}. If $d\geq1$ and $s\in(d, \min\{d+2,2d\})$, then it also satisfies the spectral dimension result of Corollary \ref{corlrp}.
\end{cor}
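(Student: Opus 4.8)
The plan is to assemble ingredients that are, by this point, all in place; the content of the proof is essentially a check that nothing in the earlier arguments secretly used $q=1$.

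First I would invoke Lemmas~\ref{vlem} and~\ref{bschecklem}, which give that for $d\geq1$ and $s\in(d,2d)$ the model LRP($d$,$s$) of Section~\ref{sec3} --- for \emph{any} $q\in[0,1]$, hence in particular without nearest-neighbour bonds --- satisfies both the volume condition (V) and the Benjamini--Schramm convergence condition (BS). The relevant lower heat kernel bounds then follow exactly as in the case analysis of Section~\ref{sec4}, with (BS) used in place of Lemma~\ref{bsclem} at the Benjamini--Schramm transfer step. Concretely: for the log-free quenched bound in the range $s\in(d,d+1)$ one combines the stable scaling limit, Theorem~\ref{lrps}(a), which by Remark~\ref{rem1}(ii) holds without the nearest-neighbour hypothesis, with Lemma~\ref{biskuplem}; for the remaining (quenched and annealed) bounds in the range $s\in(d,2d)$ one runs the generic route, using Lemmas~\ref{la1}, \ref{la2}, \ref{a3s}, \ref{a3g}, \ref{a3c} (all proved under (V) alone, and for the general $q$ model of Section~\ref{sec3}) to obtain (A1)--(A3), and then Theorem~\ref{abth} together with Corollary~\ref{abthcor} to deduce the quenched and annealed bounds with the logarithmic exponents of Theorems~\ref{lrpq} and~\ref{lrpa}. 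This is precisely the argument sketched in Remark~\ref{rem:lq}.

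Two points would need a little care. The sub-range $s\in[d+2,2d)$ is non-empty only for $d\geq3$ and lies in the Gaussian/critical regime; in the nearest-neighbour setting a lower bound there was also available via the quenched invariance principle, but no such principle is known off-lattice, so one must here rely purely on the generic route through Lemmas~\ref{a3g} and~\ref{a3c} --- which is fine, as those lemmas are stated for $d\geq2$ and their proofs invoke only (V), the energy estimate of Proposition~\ref{rvph}, and the covariance bound of Lemma~\ref{lem:cov}. For the annealed critical case $s=d+2$ there is no constant-$\lambda$ version of Lemma~\ref{a3c}, so one applies Theorem~\ref{abth} directly with $\kappa>0$ arbitrarily small, exactly as in the derivation of~\eqref{ac}. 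For the spectral dimension claim, restricted to $s\in(d,\min\{d+2,2d\})$, I would combine the lower bounds above with the matching upper bound of \cite[Theorem~1]{CS}; as recalled in the proof of the upper bounds of Theorem~\ref{lrpa}, that bound requires no nearest-neighbour edges and comes with a random threshold with polynomial tails, hence yields both quenched and annealed upper bounds of order $t^{-d/(s-d)}$ up to a logarithm. The logarithmic factors wash out in the defining limits, giving $d^{(q)}_{s}(d,s)=d^{(a)}_{s}(d,s)=2d/(s-d)$.

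I do not anticipate a genuine obstacle in the corollary itself: the substance lies in Lemmas~\ref{vlem} and~\ref{bschecklem} (resting on Biskup's cluster-volume estimate \cite{B} and the percolation estimates of \cite[Theorem~2]{CS}), and the remaining task is the bookkeeping of confirming that every step used in the relevant parts of Sections~\ref{sec3} and~\ref{sec4} relies only on (A1)--(A3), (V), (BS), the scaling limit of \cite{CS2}, and the upper bound of \cite{CS} --- none of which needs nearest-neighbour bonds when $s\in(d,2d)$ --- so that no appeal to $q=1$, whether through Lemma~\ref{bsclem} or through an invariance principle, survives.
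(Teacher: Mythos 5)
Your proposal is correct and follows essentially the same route as the paper, which dispatches Corollary~\ref{corcorcor} in a single sentence by invoking Remark~\ref{rem:lq} together with Lemmas~\ref{vlem} and~\ref{bschecklem} (and the upper bound of \cite{CS}); you have simply unpacked that remark into its component steps, and done so accurately. One small misconception worth flagging: you suggest that in the nearest-neighbour setting the range $s\in[d+2,2d)$ (non-empty for $d\geq3$) also admits a lower bound via the quenched invariance principle. In fact Theorem~\ref{lrps}(b) only covers $s>2d$, so even with $q=1$ the paper already relies on the generic route through Lemmas~\ref{a3g} and~\ref{a3c} for $d\geq3$ and $s\in[d+2,2d]$; nothing is being ``given up'' by dropping nearest-neighbour bonds in that sub-range. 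This does not affect your argument, since you run the generic route there in any case.
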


\section*{Acknowledgments}

This research was partially supported by JSPS KAKENHI, grant numbers 17F17319, 17H01093 and 19K03540, by the Singapore Ministry of Education Academic Research Fund Tier 2 grant number MOE2018-T2-2-076, by the Vietnam Academy of Science and Technology grant number CTTH00.02/22-23, and by the Research Institute for Mathematical Sciences, an International Joint
Usage/Research Center located in Kyoto University.

\bibliographystyle{amsplain}
\bibliography{SRWoLRP}

\end{document}